\newcommand{\itemEq}[1]{%
         \begingroup%
         \setlength{\abovedisplayskip}{0pt}%
         \setlength{\belowdisplayskip}{0pt}%
         \parbox[c]{\linewidth}{\begin{flalign}#1&&\end{flalign}}%
         \endgroup}
\newcommand {\comment}[1]{{\marginpar{*}\scriptsize{\bf *[Comment:}\scriptsize{\ #1}]\bf}}
\newcommand{\diagramh}[1]{{#1}}
\tikzset{
symbol/.style={
,draw=none
,every to/.append style={
edge node={node [sloped, allow upside down, auto=false]{$#1$}}}
}
}
\newcommand{\pullback}[2]{{}_{#1}\kern-\scriptspace{\times}_{#2}}
\newcommand{\st} [1]     {\scriptscriptstyle{{#1}}}
\newcommand{\rmap}       {\longrightarrow}
\newcommand{\rr}             {\rightrightarrows}
\newcommand{\cc} [1]     {\overline {{#1}}}
\newcommand{\id}         {{\mathrm {Id}}}
\newcommand{\SP} [1]     {{\left\langle {{#1}} \right\rangle}}
\newcommand{\Bi} [1]     {{\left( {{#1}} \right)_{\mathfrak{g}}}}
\newcommand{\pr}             {\mathrm{pr}}
\newcommand{\At}         {{\mathrm {At}}}
\newcommand{\frakg}     {\mathfrak{g}}
\newcommand{\frakh}     {\mathfrak{h}}
\newcommand{\frakd}     {\mathfrak{d}}
\newcommand{\frakt}     {\mathfrak{t}}
\newcommand{\fraku}     {\mathfrak{u}}
\newcommand{\gstar}     {\mathfrak{g}^*}
\newcommand{\G}         {\mathcal{G}}
\newcommand{\sour}        {\mathsf{s}}
\newcommand{\tar}         {{\mathsf{t}}}
\newcommand{\la}           {\mathcal{LA}}
\newcommand{\vb}       {\mathcal{VB}}
\newcommand{\action} {\curvearrowright}
\newcommand{\GG}   {\mathbb{G}}
\newcommand{\p}   {\mathbb{P}}
\renewcommand{\gg}   {\mathbb{g}}
\newcommand{\frakX}     {\mathfrak{X}}
\newcommand{\frakY}     {\mathfrak{Y}}
\newcommand{\frakZ}     {\mathfrak{Z}}
\newcommand{\frakA}     {\mathfrak{A}}
\newcommand{\frakE}     {\mathfrak{E}}
\newcommand{\Cour}[1]      {[\![#1]\!]}
\newcommand{\Lie}        {\mathcal L}
\newcommand\R{\mathbb{R}}
\newcommand\Z{\mathbb{Z}}
\newcommand\N{\mathbb{N}}
\newcommand\Q{\mathbb{Q}}
\newcommand{\C}{\mathbb C}
\newcommand{\M}{\mathcal{M}}
\newcommand{\bitimes}[2]{\,_{#1}\!\!\times_{#2}}
\DeclareMathOperator{\Aut}{Aut} 
\DeclareMathOperator{\AAut}{{\mathbb{Aut}}} 
\DeclareMathOperator{\Hom}{Hom} 
\DeclareMathOperator{\Inn}{Inn}
\DeclareMathOperator{\End}{End} 
\DeclareMathOperator{\Der}{Der}
\DeclareMathOperator{\Vect}{Vect}
\DeclareMathOperator{\Ad}{Ad} 
\DeclareMathOperator{\ad}{ad} 
\DeclareMathOperator{\Ker}{Ker} 
\DeclareMathOperator{\image}{Im} 
\DeclareMathOperator{\Vectd}{\Vect^{\text{def}}}
\DeclareMathOperator{\gl}{\mathfrak{gl}}
\DeclareMathOperator{\oo}{\mathfrak{o}}
\DeclareMathOperator{\so}{\mathfrak{so}}
\DeclareMathOperator{\su}{\mathfrak{su}}
\DeclareMathOperator{\uu}{\mathfrak{u}}
\renewcommand{\aa}{\mathfrak{a}}
\renewcommand{\sl}{\mathfrak{sl}}
\renewcommand{\sp}{\mathfrak{sp}}
\DeclareMathOperator{\GL}{GL}
\DeclareMathOperator{\GGL}{\mathbb{GL}}
\DeclareMathOperator{\SL}{SL}
\DeclareMathOperator{\SO}{SO}
\DeclareMathOperator{\SU}{SU}
\DeclareMathOperator{\OO}{O}
\DeclareMathOperator{\UU}{U}
\DeclareMathOperator{\Sp}{Sp}
\DeclareMathOperator{\tr}{trace}
\DeclareMathOperator{\Diff}{Diff}
\DeclareMathOperator{\Graph}{Graph}
\DeclareMathAlphabet{\mathpzc}{OT1}{pcz}{m}{it}
\theoremstyle{plain}
\newtheorem{theorem}{Theorem}[section]
\newtheorem*{theorem*}{Theorem}
\newtheorem{proposition}[theorem]{Proposition}
\newtheorem{lemma}[theorem]{Lemma}
\newtheorem{corollary}[theorem]{Corollary}
\theoremstyle{definition}
\newtheorem*{definition*}{Definition}
\newtheorem{example}[theorem]{Example}
\newtheorem{definition}[theorem]{Definition}
\newtheorem{remark}[theorem]{Remark}
\begin{document}

\author{Juan Sebasti\'an Herrera-Carmona}
\email{sebast1anherr3ra@gmail.com}

\author{Cristian Ortiz}
\address{Departamento de Matem\'atica, Universidade de S\~ao Paulo, Rua do Mat\~ao 1010, S\~ao Paulo, SP 05508-090, Brasil}
\email{cortiz@ime.usp.br}

\thanks{}

\date{\today}
\title{The Chern-Weil-Lecomte characteristic map for $L_{\infty}$-algebras}

\maketitle

\begin{abstract}

In this paper we extend the Chern-Weil-Lecomte characteristic map to the setting of $L_{\infty}$-algebras. In this general framework, characteristic classes of $L_{\infty}$-algebra extensions are defined by means of the Chern-Weil-Lecomte map which takes values in the cohomology of an $L_{\infty}$-algebra with coefficients in a representation up to homotopy. This general set up allows us to recover several known cohomology classes in a unified manner, including: the characteristic class of a Lie 2-algebra, the \v{S}evera class of an exact Courant algebroid and the curvature 3-form of a gerbe with connective structure. We conclude by introducing a Chern-Weil map for principal 2-bundles over Lie groupoids.

\end{abstract}

\tableofcontents


\section{Introduction}

Chern-Weil theory is a geometric setting which permits to construct invariants of principal bundles called characteristic classes. Given a principal $G$-bundle $P\to M$, the Chern-Weil map is an algebra morphism

\begin{equation*}
cw:S^{\bullet}(\mathfrak{g}^*)^{\Ad}\to H_{dR}(M),
\end{equation*}

\noindent which maps an invariant polynomial to a de Rham cohomology class defined in terms of the curvature of a principal connection.

In \cite{Lecomte}, Lecomte introduced characteristic classes associated with any extension of Lie algebras $\hat{\mathfrak{g}}\to \mathfrak{g}$ with kernel $\mathfrak{n}$, together with a Lie algebra representation $\mathfrak{g}\to \mathrm{End}(V)$. The corresponding \textbf{Chern-Weil-Lecomte characteristic map} is defined as 

\begin{equation*}
cw:\mathrm{Sym}^{\bullet}(\mathfrak{n},V)^{\hat{\mathfrak{g}}}\to H^{ev}(\mathfrak{g},V), f\mapsto [f_{K_h}],
\end{equation*}

\noindent where $\mathrm{Sym}^k(\mathfrak{n},V)^{\hat{\mathfrak{g}}}$ is the space of symmetric linear functions $f:\mathfrak{n}^{\otimes k}\to V$ which are $\hat{\mathfrak{g}}$-invariant and $f_{K_h}\in\wedge^{ev}\mathfrak{g}^*\otimes V$ is the Chevalley-Eilenberg cocycle defined by evaluating $f$ at a suitable wedge product of the curvature $K_h\in \wedge^2\mathfrak{g}^*\otimes \mathfrak{n}$ of a splitting $h:\mathfrak{g}\to \hat{\mathfrak{g}}$. Just as in the classical case, the cohomology class $[f_{K_h}]$ is independent of the splitting \cite{Lecomte}. 

The classical Chern-Weil morphism is recovered from this algebraic perspective by looking at the Atiyah extension of a principal bundle $P\to M$ together with the canonical representation of the Lie algebra $\mathfrak{X}(M)$ of vector fields on the space of functions $C^{\infty}(M)$. Several authors have also generalized the Chern-Weil construction to a variety of situations, see for instance \cite{AlekseevMeinrenken, AbadMontoyaQuintero, Crainic, KotovStrobl, LGTX}.

In this paper we are concerned with a generalization of Lecomte's construction to the framework of $L_{\infty}$-algebras. Introduced in \cite{LadaStasheff}, $L_{\infty}$-algebras certain higher structures which generalize Lie algebras in the sense that the Jacobi identity holds up to higher order homotopies. These objects play an important role in deformation theory but also arise as the space of observables in multisymplectic geometry \cite{Rogers}. Other examples of higher structures include: Courant algebroids \cite{LWX}, central to the study of generalized geometry, Lie groupoids and their associated differentiable stacks \cite{behrend-xu} as geometric objects which take care in a smooth way of singularities and quotients, $\mathbb{S}^1$-gerbes \cite{Brylinski} as degree three cohomology analogues of line bundles, n-plectic structures \cite{Rogers} which arise in connection with classical field theories \cite{RyvkinBurzwacher}, principal 2-bundles as the main objects of study in higher gauge theory \cite{BaezSchreiber, waldorf1, Wockel}, among many others. All of the previous examples of higher structures have an associated extension of $L_{\infty}$-algebras, hence it is natural to extend the Chern-Weil-Lecomte construction to this more general framework giving rise to characteristic classes which are invariants of the aforementioned higher structures.

In order to extend the Chern-Weil-Lecomte characteristic map to the setting of $L_{\infty}$-algebras, one needs to study the cohomology of an $L_{\infty}$-algebra with values in a representation up to homotopy \cite{Penkava, Reinhold}. Representations up to homotopy of an $L_{\infty}$-algebra $\mathfrak{g}$ are also called $L_{\infty}$-modules in \cite{LadaMarkl}, these are given by a graded vector space $\mathbb{V}$ together with a suitable notion of action of $\mathfrak{g}$. It turns out that a representation up to homotopy has an associated cochain complex $(C(\mathfrak{g},\mathbb{V}),D)$ which generalizes the Chevalley-Eilenberg complex defined by an ordinary Lie algebra representation. The corresponding cohomology $H(\mathfrak{g},\mathbb{V})$ is called the \textbf{$L_{\infty}$-cohomology} of $\mathfrak{g}$ with values in $\mathbb{V}$.  

It is natural to investigate the invariance of $H(\mathfrak{g},\mathbb{V})$ with respect to a suitable notion of morphism of representations up to homotopy. For that, we study functorial properties of $C(\mathfrak{g},\mathbb{V})$. It is well known that an $L_{\infty}$-algebra structure on $\mathfrak{g}$ is equivalent to a degree one coderivation $d_{\mathfrak{g}}:S(\mathfrak{g}[1])\to S(\mathfrak{g}[1])$ with $d^2_{\mathfrak{g}}=0$. Here $S(\mathfrak{g}[1])$ is the symmetric coalgebra of the graded vector space $\mathfrak{g}[1]$. An \textbf{$L_{\infty}$-morphism} $F:(S(\mathfrak{g}[1]),d_{\mathfrak{g}})\to (S(\mathfrak{h}[1]),d_{\mathfrak{h}})$ is a coalgebra morphism which commutes with the coderivations. An $L_{\infty}$-morphism is completely determined by its projection onto $\mathfrak{h}[1]$, hence by a family of maps $ F^1_n:S^n(\mathfrak{g}
[1])\to \mathfrak{h}[1]; \quad n\geq 1.$ We say that $F$ is \textbf{strict} if $F^1_n=0$ for every $n\geq 2$. Every $L_{\infty}$-algebra $\mathfrak{g}$ has an underlying cochain complex of vector spaces defined by the first order bracket $[\cdot]:\mathfrak{g}\to \mathfrak{g}$ given by the $L_{\infty}$-structure of $\mathfrak{g}$. If $F:(S(\mathfrak{g}[1]),d_{\mathfrak{g}})\to (S(\mathfrak{h}[1]),d_{\mathfrak{h}})$ is an $L_{\infty}$-morphism, then its \textbf{linear part} $F^1_1:\mathfrak{g}[1]\to \mathfrak{h}[1]$ is a cochain map between the underlying complexes.  We say that $F$ is an \textbf{$L_{\infty}$-quasi-isomorphism} if its linear part is a quasi-isomorphism between the underlying complexes.

One can see that a graded vector space $\mathbb{V}$ which is a representation up to homotopy of $\mathfrak{g}$ inherits a canonical structure of cochain complex. Let $\mathbb{V}$ and $\mathbb{W}$ be  representations up to homotopy of $\mathfrak{g}$ and $\mathfrak{h}$, respectively. A \textbf{morphism of representations up to homotopy} $\mathbb{V}\to \mathbb{W}$ is given by a pair $(F,f)$ where $F:\mathfrak{g}\to \mathfrak{h}$ is an $L_{\infty}$-algebra map and $f:\mathbb{W}\to \mathbb{V}$ is a degree preserving map satisfying a suitable equivariance condition, see Definition \ref{def:ruthmorphism}. If both $F$ and $f$ are quasi-isomorphisms, we say that $(F,f)$ is a \textbf{quasi-isomorphism} of representations up to homotopy.

It turns out that every morphism of representations up to homotopy induces a pullback $F^{*}:C(\mathfrak{h},\mathbb{W}) \to C(\mathfrak{g},\mathbb{V})$ which is a cochain map, see Proposition \ref{equivariance}. Our first main result says that the invariance of the $L_{\infty}$-algebra cohomology is controlled by the linear part of a morphism of representations up to homotopy.

\begin{theorem*}

Let $\mathbb{V}$ and $\mathbb{W}$ be representations up to homotopy of $\mathfrak{g}$ and $\mathfrak{h}$, respectively. If  $(F,f):\mathbb{V}\to \mathbb{W}$ is a quasi-isomorphism of representations up to homotopy, then

  \begin{center}
    \begin{tikzcd}
       F^*:C(\mathfrak{h},\mathbb{W}) \arrow[r]&C(\mathfrak{g},\mathbb{V})
    \end{tikzcd}
\end{center}
is a quasi-isomorphism. Hence $H(\mathfrak{g},\mathbb{V})\cong H(\mathfrak{h},\mathbb{W})$.

\end{theorem*}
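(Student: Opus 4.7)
My plan is to reduce the statement to a quasi-isomorphism question on symmetric powers by introducing a canonical decreasing filtration by polynomial degree on both complexes. A cochain in $C(\mathfrak{g},\mathbb{V})$ is a map $S(\mathfrak{g}[1])\to \mathbb{V}$, and the total differential $D$ decomposes according to the arity of the $L_{\infty}$-brackets on $\mathfrak{g}$ and of the structure maps of $\mathbb{V}$. Each component of $D$ either preserves or strictly raises the symmetric degree of the input, so the filtration
\begin{equation*}
F^{p}C(\mathfrak{g},\mathbb{V})=\prod_{n\ge p}\Hom\bigl(S^{n}(\mathfrak{g}[1]),\mathbb{V}\bigr)
\end{equation*}
is preserved by $D$, and is complete, exhaustive and Hausdorff. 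The analogous filtration is defined on $C(\mathfrak{h},\mathbb{W})$, and since the coalgebra morphism underlying $F$ can only raise polynomial degree, the pullback $F^{*}$ respects these filtrations.

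\textbf{The $E_{1}$-page.} On the associated graded only the arity-preserving part of $D$ survives, and this is precisely the differential induced by the linear bracket of $\mathfrak{g}$ together with the linear action of $\mathfrak{g}$ on $\mathbb{V}$. Hence one obtains a canonical identification
\begin{equation*}
\mathrm{gr}^{p}C(\mathfrak{g},\mathbb{V})\;\cong\; \Hom\bigl(S^{p}(\mathfrak{g}[1]),\mathbb{V}\bigr),
\end{equation*}
where $\mathfrak{g}[1]$ and $\mathbb{V}$ are endowed with their underlying cochain complex structures. Under this identification, the induced map on graded pieces is $\mathrm{gr}^{p}(F^{*}):\beta\mapsto f\circ \beta\circ S^{p}(F^{1}_{1})$.

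\textbf{Künneth and comparison.} Over a field of characteristic zero, the symmetric power functor $S^{p}$ preserves quasi-isomorphisms of cochain complexes: this follows from the graded Künneth formula applied to $(F^{1}_{1})^{\otimes p}$ together with the exactness of the $\Sigma_{p}$-invariants functor. Thus $S^{p}(F^{1}_{1})$ is a quasi-isomorphism, and combined with $f$ being a quasi-isomorphism, functoriality of the Hom complex yields that $\mathrm{gr}^{p}(F^{*})$ is a quasi-isomorphism for every $p$. Therefore $F^{*}$ induces an isomorphism on $E_{1}$-pages, and the Eilenberg--Moore comparison theorem for spectral sequences of complete filtrations forces $F^{*}$ itself to be a quasi-isomorphism, giving $H(\mathfrak{g},\mathbb{V})\cong H(\mathfrak{h},\mathbb{W})$.

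\textbf{Main obstacle.} The subtle point I expect to have to handle carefully is the behaviour of the Hom complex under quasi-isomorphisms in both arguments at once. If $\mathfrak{g}$ is not finite-dimensional degreewise, then $\Hom(S^{p}(\mathfrak{g}[1]),-)$ is not automatically exact, and the passage from the quasi-isomorphism $S^{p}(F^{1}_{1})$ to a quasi-isomorphism of Hom complexes requires either a boundedness hypothesis on $\mathbb{V}$ or, in full generality, an explicit chain-level homotopy inverse. The natural way to produce the latter is via the homological perturbation lemma, starting from contractions of the mapping cones of $F^{1}_{1}$ and $f$, and transporting them to a contracting homotopy for the cone of $\mathrm{gr}^{p}(F^{*})$.
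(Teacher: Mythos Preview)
Your proposal is correct and follows essentially the same architecture as the paper: both arguments filter $C(\mathfrak{g},\mathbb{V})$ by symmetric weight (the paper calls this the primitive filtration), identify the associated graded with the Hom-complex $\Hom(S^{p}(\mathfrak{g}[1]),\mathbb{V})$ carrying only the differentials coming from $\lambda_{1}$ and $\partial$, show that $F^{*}$ induces a quasi-isomorphism on each graded piece, and conclude via the Eilenberg--Moore comparison theorem for complete exhaustive filtrations.

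The one genuine difference is how the quasi-isomorphism on $E_{0}$ is established. The paper invokes Kajiura's theorem to obtain an $L_{\infty}$-homotopy inverse $T$ of $F$ together with a homotopy \emph{coderivation} $H$; it then restricts $H$ to fixed weight $p$, pairs it with a homotopy inverse $t$ of $f$, and writes down an explicit chain homotopy for $T_{0}^{*}F_{0}^{*}-\id$. Your route is more elementary: since the ground ring is a field, $F^{1}_{1}$ and $f$ are automatically chain homotopy equivalences, $S^{p}$ sends homotopy equivalences to homotopy equivalences, and pre- and post-composition by homotopy equivalences are homotopy equivalences of Hom-complexes. This avoids Kajiura entirely; in particular, the ``main obstacle'' you flag is not an obstacle over $\mathbb{R}$, so the homological perturbation lemma is unnecessary here.

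One small imprecision: the differential on $\mathrm{gr}^{p}$ is built from $\lambda_{1}$ on $\mathfrak{g}[1]$ and the internal differential $\partial = \rho(1\otimes -)$ on $\mathbb{V}$, i.e.\ the \emph{zero}-arity component of $\rho$, not a ``linear action of $\mathfrak{g}$ on $\mathbb{V}$'' (which would be the arity-one component and would change weight).
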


We proceed now to state the $L_{\infty}$-version of the Chern-Weil-Lecomte map. For that, recall that an extension of $L_{\infty}$-algebras is a short exact sequence of strict $L_{\infty}$-morphisms

$$0\to \mathfrak{n}\to \hat{\mathfrak{g}}\to \mathfrak{g}\to 0.$$

\noindent Suppose that $\mathbb{V}$ is a representation up to homotopy of $\mathfrak{g}$. We also see $\mathbb{V}$ as a representation up to homotopy of $\hat{\mathfrak{g}}$ defined by the pullback representation up to homotopy via $\hat{\mathfrak{g}}\to \mathfrak{g}$. One observes that $\mathfrak{n}[1]$ inherits a canonical representation up to homotopy of $\hat{\mathfrak{g}}$, which extends naturally to any exterior power $\wedge^k\mathfrak{n}[1]$. Hence one considers the space $\underline{\mathrm{Hom}}^{\bullet}(\wedge^k\mathfrak{n}[1],\mathbb{V})^{\hat{\mathfrak{g}}}$ of all morphisms of representations up to homotopy. The following is our second main result.

\begin{theorem*}
 Let $K_h\in C^1(\mathfrak{g},\mathfrak{n}[1])$ be the curvature of a section $h:\mathfrak{g}\to \hat{\mathfrak{g}}$ of an extension of $L_{\infty}$-algebras

  \begin{center}
 \begin{tikzcd}
    0\arrow[r]&\mathfrak{n}\arrow[r,"\iota"]&\hat{\mathfrak{g}}\arrow[r,"\pi"]&\mathfrak{g}\arrow[r]&0,
 \end{tikzcd}
\end{center}

\noindent and $\mathbb{V}$ a representation up to homotopy of $\mathfrak{g}$. If $f\in \underline{\Hom}^{\bullet}(\wedge^k\mathfrak{n}[1],\mathbb{V})^{\hat{\mathfrak{g}}}$, then

\begin{enumerate}

\item the element $f_{K_h}=f\circ K^{\wedge k}_h$ is a cocycle in the complex $(C(\mathfrak{g},\mathbb{V}),D_{\rho})$,

\item the cohomology class $[f_{K_h}]\in H^{\bullet+k}(\mathfrak{g},\mathbb{V})$ is independent of the section $h$.

\end{enumerate}

\end{theorem*}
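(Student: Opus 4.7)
The plan is to adapt Lecomte's classical argument to the $L_\infty$-setting, exploiting the coalgebra formulation of $L_\infty$-algebras together with the invariance of $f$ as a morphism of representations up to homotopy.

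For part (1), I would first prove a Bianchi-type identity asserting that the curvature $K_h$ is a cocycle in a suitable complex modelled on $C(\mathfrak{g}, \mathfrak{n}[1])$. The natural way to extract this is from the square-zero condition $d_{\hat{\mathfrak{g}}}^2 = 0$: composing with the (not necessarily differential-preserving) coalgebra map $S(h[1])$, and using $\pi\circ h = \mathrm{id}_{\mathfrak{g}}$ together with $d_{\mathfrak{g}}^2 = 0$, the residual component in $\mathfrak{n}[1]$ assembles into $D(K_h)$, which must therefore vanish. Once Bianchi is in place, the condition that $f \in \underline{\Hom}^{\bullet}(\wedge^k \mathfrak{n}[1], \mathbb{V})^{\hat{\mathfrak{g}}}$ says, by Proposition \ref{equivariance}, that $f$ induces a cochain map $f_\ast : C(\mathfrak{g}, \wedge^k \mathfrak{n}[1]) \to C(\mathfrak{g}, \mathbb{V})$; here the subtle point is that $\hat{\mathfrak{g}}$-invariance restricts, after evaluation on $K_h$, to genuine $\mathfrak{g}$-equivariance, because the discrepancy lies in $\mathfrak{n}$ and is absorbed by the very definition of $K_h$. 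Combining this with the Leibniz rule of $D$ on wedge powers, I conclude that $f_{K_h} = f\circ K_h^{\wedge k}$ is a cocycle in $(C(\mathfrak{g}, \mathbb{V}), D_\rho)$.

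For part (2), I would run a transgression argument. Given two sections $h_0, h_1 : \mathfrak{g} \to \hat{\mathfrak{g}}$, the difference $\eta = h_1 - h_0$ takes values in $\mathfrak{n}[1]$ because $\pi \circ h_i = \mathrm{id}_{\mathfrak{g}}$, so $\eta \in C^0(\mathfrak{g}, \mathfrak{n}[1])$. Along the linear family $h_t = h_0 + t\eta$, I would compute $\partial_t K_{h_t}$ directly from the curvature formula, and use $\hat{\mathfrak{g}}$-invariance of $f$ to convert this derivative, contracted with $K_{h_t}^{\wedge(k-1)}$, into a total $D$-coboundary of the form $D(k\cdot f(\eta \wedge K_{h_t}^{\wedge(k-1)}))$. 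Integration over $t \in [0, 1]$ then produces an explicit primitive for $f_{K_{h_1}} - f_{K_{h_0}}$, proving that the class $[f_{K_h}]$ is independent of the section.

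The main obstacle will be the careful bookkeeping of signs, degrees, and higher correction terms arising from the higher brackets of $\mathfrak{g}$ and $\hat{\mathfrak{g}}$: unlike the classical Lie algebra case, both $K_h$ and its derivative along a path of sections are infinite sums indexed by the arities of the brackets, and the Bianchi identity, the Leibniz rule on $K_h^{\wedge k}$, and the transgression formula must each be verified in this expanded form. I expect the cleanest derivation to proceed entirely at the coalgebra level, by writing $K_h$ as the $\mathfrak{n}[1]$-component of the deviation $d_{\hat{\mathfrak{g}}} \circ S(h[1]) - S(h[1]) \circ d_{\mathfrak{g}}$ and then extracting both the Bianchi identity and the transgression formula from appropriate projections of $d_{\hat{\mathfrak{g}}}^2 = 0$ and $d_{\mathfrak{g}}^2 = 0$.
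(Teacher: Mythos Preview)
Your proposal is correct and follows essentially the same route as the paper: part (1) via the Bianchi identity (Theorem~\ref{thm:bianchi}), the Leibniz rule for cochain products (Proposition~\ref{prop:leibnizcochain}), and the intertwining property of $f$ (equation~\eqref{eq:intertwiningnotruth}); part (2) via the linear homotopy $h_t = h_0 + t(h_1-h_0)$, the curvature variation formula $\frac{d}{dt}K_{h_t} = D_{\rho_t}\alpha$ (Proposition~\ref{prop:curvaturevariation}), and integration of the explicit primitive $\beta_t = k\, f^*(\alpha \wedge_{\id} K_{h_t}^{\wedge(k-1)})$. One small clarification on your ``subtle point'': the reason $\hat{\mathfrak{g}}$-equivariance of $f$ descends to the intertwining relation \eqref{eq:intertwiningnotruth} is not that the discrepancy is absorbed by $K_h$, but rather that the $\hat{\mathfrak{g}}$-action on $\mathbb{V}$ is the pullback $\pi^*\rho$, which kills $\mathfrak{n}$; hence $\hat{\mathfrak{g}}$-equivariance already implies equivariance for the (non-ruth) map $h^*(\ad^{\otimes k}|_{\mathfrak{n}})$ regardless of the choice of section $h$.
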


Here $K^{\wedge k}_h\in C^k(\mathfrak{g},\mathbb{V})$ is a suitable exterior power defined in terms of cochain products, see subsection \ref{subsec:cochainproducts}. As a consequence, for every $k\in \N$ there is a well defined map

$$cw:\underline{\Hom}^{\bullet}(\wedge^k\mathfrak{n}[1],\mathbb{V})^{\hat{\mathfrak{g}}} \to H^{\bullet+k}(\mathfrak{g},\mathbb{V}); f\mapsto [f_{K_h}]$$

\noindent called the \textbf{Chern-Weil-Lecomte map} associated with an $L_{\infty}$-algebra extension.

The cohomology classes $cw(f)=[f_{K_h}]\in H^{\bullet}(\mathfrak{g},\mathbb{V})$ are invariants of an extension of the $L_{\infty}$-algebra $\mathfrak{g}$ together with the representation up to homotopy $\mathbb{V}$. This is the content of Theorem \ref{thm:naturality}.

An important class of $L_{\infty}$-algebras is given by \textbf{2-term} $L_{\infty}$-algebras, that is, those whose underlying complex has length 2. As shown in \cite{BaezCrans}, 2-term $L_{\infty}$-algebras correspond to Lie 2-algebras, which are categorified  Lie algebras. There are two different types of Lie 2-algebras called \textbf{strict} and \textbf{semi-strict}. The former are Lie 2-algebras in which the usual Jacobi identity is satisfied and they are equivalent to crossed modules of Lie algebras. Among the main examples of strict Lie 2-algebras, one encounters: multiplicative vector fields on a Lie groupoid \cite{BEL, OrtizWaldron}, the sections of an LA-groupoid \cite{OrtizWaldron}, infinitesimal symmetries of an $\mathbb{S}^1$-gerbe \cite{Collier1, Collier2, KrepskiVaughan}, among others. The latter are Lie 2-algebras in which the Jacobi identity holds up to a natural isomorphism and important examples include: sections of a Courant algebroid \cite{RoytenbergWeinstein}, the Atiyah Lie 2-algebra associated with a closed 3-form \cite{FiorenzaRogersSchreiber}, the Poisson Lie 2-algebra of 2-plectic structures \cite{Rogers} and multiplicative forms on quasi-Poisson groupoids \cite{ChenLangLiu}.

We apply the Chern-Weil-Lecomte construction of characteristic classes of $L_{\infty}$-algebras to recover well known cohomology classes associated with a variety of structures, including: the characteristic class of any Lie 2-algebra, the \v{S}evera class of an exact Courant algebroid and the 3-curvature form of an $\mathbb{S}^1$-gerbe with connective structure.

Besides the aforementioned examples of characteristic classes defined via the $L_{\infty}$-version of the Chern-Weil-Lecomte map, we also apply our construction to introduce a Chern-Weil morphism for principal 2-bundles over Lie groupoids. Principal 2-bundles equipped with a 2-connection are the main objects of study in higher gauge theory \cite{BaezSchreiber, waldorf1}, which is a geometric setting in which parallel transport is defined not only along paths, but also along surfaces. The notion of principal 2-bundle considered in this paper can be thought of as a strict version of a stacky principal bundle as in \cite{BNZ}. Several notions of principal 2-bundles have been studied by different authors, including \cite{Wockel,waldorf1, CCP}. Also, in \cite{LGTX} it is introduced a Chern-Weil map for principal $G$-bundles over Lie groupoids, which are particular cases of principal 2-bundles in which the structural 2-group is the unit groupoid defined by a Lie group.

Given a Lie 2-group $\GG$, a \textbf{principal 2-bundle} is given by a right Lie 2-group action $\p\curvearrowleft \GG$ together with a Lie groupoid fibration $\pi:\p\to \mathbb{X}$ which is $\GG$-invariant and principal in the sense that  $\p\times \GG\to \p\times_{\mathbb{X}} \p$ is a Lie groupoid isomorphism. A \textbf{2-connection} on $\p=(P_1\rr P_0)$ is defined as a $\mathbb{g}$-valued multiplicative 1-form whose components are ordinary connections on $P_
1$ and $P_
0$, respectively. Here $\mathbb{g}$ denotes the strict Lie 2-algebra of the Lie 2-group $\GG$. See Definition \ref{def:2connection}. Let $\mathfrak{h}\to \mathfrak{g}_0$ be the crossed module associated with $\mathbb{g}=\mathfrak{g}_1\rr \mathfrak{g}_0$. Then, every 2-connection can be written as $\theta=\bar\theta+s^*\theta_0$ where $\bar\theta\in \Omega^1(P_1,\mathfrak{h})$ and $\theta_0\in\Omega^1(P_0,\mathfrak{g}_0)$ satisfying certain conditions.

The \textbf{curvature} of $\theta$ is defined as the multiplicative 2-form $\Omega=d\theta + \frac{1}{2}[\theta,\theta]\in \Omega^2(\p,\mathbb{g})$. Due to the multiplicativity of the curvature 2-form, one has $\Omega=\bar\Omega+s^*\Omega_0$ where $\bar\Omega\in \Omega^2(P_1,\mathfrak{h})$ and $\Omega_0\in\Omega^2(P_0,\mathfrak{g}_0)$ is the curvature of the connection $\theta_0$. We introduce the following terminology.

\begin{definition*}
Let $\theta=\bar\theta+s^*\theta_0$ be a 2-connection on a principal 2-bundle $\p\curvearrowleft\GG\to \mathbb{X}$ with curvature 2-form $\Omega=\bar\Omega + s^*\Omega_0$.  A \textbf{curving} for $\theta$ is a basic 2-form $B\in\Omega^2(P_0,\mathfrak{h})$ with:

\begin{enumerate}

\item $\Omega_0+\partial\circ B=0$, and
\item $\bar\Omega+(t^*-s^*)(B)=0$.
 
\end{enumerate}

\end{definition*}

A \textbf{connective structure} on a principal 2-bundle $\p\curvearrowleft\GG\to \mathbb{X}$ is a pair $(\theta,B)$ where $\theta$ is a 2-connection on $\p$ and $B$ is a curving for $\theta$. The \textbf{curvature 3-form} of a connective structure $(\theta,B)$ is the 3-form $\nabla^{\theta_0}B\in \Omega^3(P_0,\mathfrak{h})$ defined as the covariant derivative of $B$ induced by $\theta_0$. In the special case of principal 2-bundles with $\GG=(\mathbb{S}^1\rr \{*\})$, one recovers the usual notions of connective structure and curvature 3-form on $\mathbb{S}^1$-gerbes.

Just as ordinary connections, a 2-connection on $\p\curvearrowleft \GG \to \mathbb{X}$ is equivalent to a multiplicative splitting of the Atiyah sequence

$$0\to \Ad(\mathbb{P})\to\At(\mathbb{P})\to T\mathbb{X}\to0.$$

\noindent This is a short exact sequence of LA-groupoids over $\mathbb{X}$.  As shown in \cite{OrtizWaldron}, every LA-groupoid $\mathbb{E}\to \mathbb{X}$ has an associated strict Lie 2-algebra $\mathbb{L}(\mathbb{E})$ of multiplicative sections. In particular, there is extension of Lie 2-algebras of multiplicative sections

$$0\to\mathbb{L}(\Ad(\mathbb{P}))\to\mathbb{L}(\At(\mathbb{P}))\to\mathbb{L}(T\mathbb{X})\to 0.$$

\noindent called the \textbf{Atiyah extension} of $\p$. A 2-connection induces a splitting $h:\mathbb{L}(T\mathbb{X})\to \mathbb{L}(\At(\mathbb{\p}))$ of the Atiyah extension which is not necessarily an $L_{\infty}$-morphism. The curvature 3-form of a connective structure on a principal 2-bundle has the following geometric interpretation.

\begin{theorem*}

Let $(\theta, B)$ be a connective structure on a principal 2-bundle $\p\curvearrowleft \GG\to \mathbb{X}$. If the curvature 3-form vanishes, then the induced splitting $h:\mathbb{L}(T\mathbb{X})\to \mathbb{L}(\At(\mathbb{\p}))$ defined by the 2-connection $\theta$ is a weak morphism of 2-term $L_{\infty}$-algebras.
\end{theorem*}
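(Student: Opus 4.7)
The plan is to exhibit, from the splitting $h$ and the curving $B$, an explicit weak $L_{\infty}$-morphism of 2-term $L_{\infty}$-algebras whose linear part is $h$, and then verify its structure equations. Recall that a weak morphism between 2-term $L_{\infty}$-algebras is determined by a chain map $\phi_1$ at the linear level together with a quadratic component $\phi_2$ landing in the degree $-1$ piece of the target, satisfying three coherence relations: the chain map condition, a bracket-compatibility condition with defect measured by a coboundary-type expression in $\phi_2$, and a Jacobiator coherence.

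First I would make the 2-term $L_{\infty}$-structures on $\mathbb{L}(\At(\mathbb{P}))$, $\mathbb{L}(\Ad(\mathbb{P}))$ and $\mathbb{L}(T\mathbb{X})$ explicit via their descriptions as multiplicative sections of LA-groupoids following \cite{OrtizWaldron}, and write the linear part of $h$ in terms of the components $\bar\theta$ and $\theta_0$ of the 2-connection. Because $h$ splits the Atiyah sequence at the level of underlying cochain complexes (both at the object and arrow levels), the first coherence relation, that $h$ intertwines differentials, holds automatically.

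Next I would define the quadratic part $h_2$ of the candidate weak morphism from the curving $B$. Concretely, to a pair of multiplicative vector fields on $\mathbb{X}$ I associate the multiplicative section of $\Ad(\mathbb{P})$ whose $P_0$-component is $B$ evaluated on the $\theta_0$-horizontal lifts of their base components; the $P_1$-component is then forced by multiplicativity, consistency being exactly curving axiom (2). The curvature $K_h$ of $h$ is, by the structure equation $\Omega=d\theta+\frac{1}{2}[\theta,\theta]$, encoded in $\bar\Omega$ and $\Omega_0$, and curving axioms (1) and (2) rewrite $K_h$ as the coboundary of $h_2$ in the 2-term sense, verifying the second coherence relation.

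The main obstacle is the third, trilinear coherence relation, which measures the Jacobiator defect of $h$ against an expression built from $h_2$ and the binary brackets of the target. Expanding this defect using the previous step produces a horizontal 3-form on $P_0$ depending on $B$, $\theta_0$, $\Omega_0$ and $\bar\Omega$. Eliminating the curvatures via axioms (1) and (2) and using the Bianchi identity $d\Omega_0+[\theta_0,\Omega_0]=0$, the expression collapses to the covariant derivative $\nabla^{\theta_0}B$ contracted with three horizontal lifts of multiplicative vector fields. Hence the Jacobiator coherence holds precisely when $\nabla^{\theta_0}B=0$, which is the assumed vanishing of the curvature 3-form, and the theorem follows.
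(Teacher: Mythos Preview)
Your proposal is essentially the same approach as the paper's: define the quadratic component $\Omega_m(\xi_1,\xi_2):=-B(u_1^h,u_2^h)$ from the curving evaluated on $\theta_0$-horizontal lifts, use curving axioms (1) and (2) to verify the bracket-defect conditions (the paper's WF1 and WF2), and use vanishing of $\nabla^{\theta_0}B$ for the Jacobiator coherence (WF3). Two small points of imprecision: your $h_2$ should land in the degree $-1$ piece $C^\infty(P_0,\mathfrak{h})^{G_0}$ (core sections), not in multiplicative sections of $\Ad(\mathbb{P})$ as you wrote; and the WF3 verification is more direct than you sketch---evaluating $dB+\tau_*(\theta_0\wedge B)$ on three horizontal lifts kills the $\theta_0$-term immediately and the Cartan formula for $dB$ yields WF3, so no appeal to the Bianchi identity for $\Omega_0$ is needed.
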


This result generalizes a theorem by Krepski and Vaughan \cite{KrepskiVaughan} in the setting of $\mathbb{S}^1$-gerbes with connective structure.

We explain now our last main result. Let $\p\curvearrowleft \GG\to \mathbb{X}$ be a principal 2-bundle. The 2-vector space of multiplicative functions on $\mathbb{F(X)}$ is the one associated with the 2-term complex $C^{\infty}(X_0)\to C^{\infty}_{mult}(\mathbb{X})$ given by the truncation of the usual groupoid complex. The Lie derivative extends to a natural representation up to homotopy of the Lie 2-algebra of multiplicative vector fields $\mathbb{L}(T\mathbb{X})$ on $\mathbb{F(X)}$ \cite{HOW}. Combining this representation up to homotopy with the splitting $h:\mathbb{L}(T\mathbb{X})\to \mathbb{L}(\At(\mathbb{\p}))$ defined by a 2-connection on $\p$, one is led to the following version of the Chern-Weil map for principal 2-bundles.

\begin{theorem*}
Let $\p\curvearrowleft\GG\to \mathbb{X}$ be a principal 2-bundle equipped with a 2-connection. Let $h:\mathbb{L}(T\mathbb{X})\to \mathbb{L}(\At(\p))$ the induced splitting of the Atiyah extension of Lie 2-algebras. For every $k\geq 0$ there exists a natural map

\begin{align}\label{eq:CWL2bundles}
cw:\underline{\Hom}^{\bullet}(\wedge^k\mathbb{L}(\Ad(\p))&[1],\mathbb{F(X)})^{\mathbb{L}(\mathrm{At}(\mathbb{P}))}\to H^{\bullet+k}(\mathbb{L}(T\mathbb{X}),\mathbb{F(X)})\\ \nonumber
&f\mapsto [f\circ K^k_h],
\end{align}

\noindent where $K_h\in C^1(\mathbb{L}(T\mathbb{X}),\mathbb{L}(\Ad(\p))[1])$ is the curvature of $h$. The map $cw$ is independent of the 2-connection.
\end{theorem*}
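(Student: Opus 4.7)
The plan is to deduce this statement from the general Chern-Weil-Lecomte theorem for $L_{\infty}$-algebra extensions stated earlier, applied to the Atiyah extension of Lie 2-algebras
$$0\to\mathbb{L}(\Ad(\mathbb{P}))\to\mathbb{L}(\At(\mathbb{P}))\to\mathbb{L}(T\mathbb{X})\to 0.$$
First I would recall that, by the construction of the multiplicative-sections functor on LA-groupoids, this is a short exact sequence of strict Lie 2-algebras, hence of 2-term $L_{\infty}$-algebras, so that the general theorem is applicable. A 2-connection on $\p$ is by definition a multiplicative splitting of the Atiyah sequence of LA-groupoids; applying the functor $\mathbb{L}$ then produces a degree-preserving section $h:\mathbb{L}(T\mathbb{X})\to \mathbb{L}(\At(\p))$ of the extension above, typically not an $L_{\infty}$-morphism, whose curvature $K_h\in C^1(\mathbb{L}(T\mathbb{X}),\mathbb{L}(\Ad(\p))[1])$ is precisely the data needed to feed into the Chern-Weil-Lecomte construction.

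Next, I would invoke the result of \cite{HOW} providing $\mathbb{F(X)}$, defined by the truncated groupoid complex $C^{\infty}(X_0)\to C^{\infty}_{mult}(\mathbb{X})$, with a natural representation up to homotopy of the Lie 2-algebra $\mathbb{L}(T\mathbb{X})$ extending the ordinary Lie derivative. Pulling back this representation along the strict $L_{\infty}$-morphism $\pi:\mathbb{L}(\At(\p))\to \mathbb{L}(T\mathbb{X})$ yields a representation up to homotopy of $\mathbb{L}(\At(\p))$ on the same graded vector space $\mathbb{F(X)}$. With these ingredients in place, a direct application of the general Chern-Weil-Lecomte theorem gives that for every invariant $f\in \underline{\Hom}^{\bullet}(\wedge^k\mathbb{L}(\Ad(\p))[1],\mathbb{F(X)})^{\mathbb{L}(\At(\p))}$ the element $f\circ K^k_h$ is a cocycle in $C(\mathbb{L}(T\mathbb{X}),\mathbb{F(X)})$, and its cohomology class in $H^{\bullet+k}(\mathbb{L}(T\mathbb{X}),\mathbb{F(X)})$ depends only on the extension and on $f$, not on the chosen splitting $h$. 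This defines the desired map $cw$.

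Finally, for the independence of the 2-connection, I would observe that two 2-connections $\theta,\theta'$ on $\p$ produce two sections $h,h'$ of one and the same Atiyah extension of Lie 2-algebras, so the splitting-independence in the general theorem yields $[f\circ K^k_h]=[f\circ K^k_{h'}]$ in $H^{\bullet+k}(\mathbb{L}(T\mathbb{X}),\mathbb{F(X)})$, which is precisely the assertion that $cw$ does not depend on the choice of 2-connection. The main point to check carefully will be that the functor $\mathbb{L}$ transports a multiplicative linear splitting of the Atiyah LA-groupoid sequence into a section of the corresponding $L_{\infty}$-extension in a way compatible with the coalgebra-theoretic framework of the general theorem, and in particular that the curvature $K_h$ computed via the functor $\mathbb{L}$ matches the curvature appearing in the general Chern-Weil-Lecomte construction; once this compatibility is established, the theorem becomes a direct specialization of the general result already proved in the paper.
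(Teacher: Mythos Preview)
Your proposal is correct and follows the same approach as the paper: the theorem is obtained by specializing the general Chern-Weil-Lecomte theorem (Theorem~\ref{thm:lecomte}) to the Atiyah extension $0\to\mathbb{L}(\Ad(\mathbb{P}))\to\mathbb{L}(\At(\mathbb{P}))\to\mathbb{L}(T\mathbb{X})\to 0$ equipped with the representation up to homotopy of $\mathbb{L}(T\mathbb{X})$ on $\mathbb{F(X)}$ from \cite{HOW}, and the independence of the 2-connection is exactly the splitting-independence asserted in part~(2) of that theorem. The paper's own argument is in fact just the one-line observation preceding the theorem statement, so your write-up is a slightly more detailed version of the same reasoning.
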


It is natural to investigate the relation between the cohomology $H(\mathbb{L}(T\mathbb{X}),\mathbb{F(X)})$ of the Lie 2-algebra of multiplicative vector fields $\mathbb{L}(T\mathbb{X})$ with values in $\mathbb{F}(\mathbb{X})$ and the de Rham cohomology $H_{dR}(\mathbb{X})$, which is defined as the total cohomology of the Bott-Shulman-Stasheff complex of the base Lie groupoid $\mathbb{X}$. We plan to investigate this question in a future work.

This work is organized as follows. In Section 2 we present the basics on $L_{\infty}$-algebras and their morphisms. In Section 3, we study the cohomology of an $L_{\infty}$-algebra with coefficients in a representation up to homotopy. The main result of this section is Theorem \ref{CEquasiiso} which says that the $L_{\infty}$-algebra cohomology with coefficients in a representation up to homotopy is invariant by quasi-isomorphisms of representations up to homotopy. In Section 4 we study extensions of $L_{\infty}$-algebras and we prove Theorem \ref{thm:lecomte} which is the $L_{\infty}$-version of the Chern-Weil-Lecomte map and the main result of this section. We also discuss several examples of cohomology classes which can be described by means of the Chern-Weil-Lecomte map. In Section 5 we specialize our results to the setting of principal 2-bundles. We give a detailed presentation of principal 2-bundles, 2-connections and curvature. We show Theorem \ref{thm:flatconnectivestructure} which give an interpretation of the curvature 3-form of a connective structure on a principal 2-bundle as the failure of a splitting of the Atiyah extension to be a weak $L_{\infty}$-morphism. Then we state Theorem \ref{thm:CWL2bundles} which introduces a Chern-Weil map for principal 2-bundles over Lie groupoids. \\

\textbf{Acknowledgments:} We would like to thank Olivier Brahic, Eduardo Hoefel, Maria Amelia Salazar, Gabriel Sevestre, James Waldron and Marco Zambon for several comments which have improved this work. This work is based on the first author's doctoral thesis \cite{Herrera-Carmona} carried out at the University of Sao Paulo. Herrera-Carmona thanks the Institute of Mathematics and Statistics of the University of Sao Paulo and CAPES-Brazil for financial support. Ortiz was partially supported by National Council of Research and Development CNPq-Brazil, Bolsa de Produtividade em Pesquisa Grant 315502/2020-7 and by Grant 2019/13204-0 Sao Paulo Research Foundation - FAPESP and CONICYT - Chile.


\section{Background material}

In this section we briefly recall the definition of $L_{\infty}$-algebras and their morphisms. We follow \cite{Reinhold} closely. 

\subsection{Graded vector spaces}

Let $\mathbb{V}=\bigoplus_{n\in \mathbb{Z}}V_n$ be a graded vector space. An element $v\in V_n$ is called homogeneous of degree $n$ and its degree is denoted by $|v|$. Given a second graded vector space $\mathbb{W}$, the space $\underline{\Hom}(\mathbb{V},\mathbb{W})$ is the graded vector space whose homogeneous component of degree $p$ is defined by

$$\underline{\Hom}^p(\mathbb{V},\mathbb{W}):=\bigoplus_{n\in \Z}\mathrm{Hom}(V_n,W_{n+p}).$$
A linear map $f\in \underline{\Hom}^p(\mathbb{V},\mathbb{W})$ is called homogeneous of degree $|f|=p$. A linear map $f:\mathbb{V}\to \mathbb{W}$ is called degree preserving if it is homogeneous of degree zero. If $\mathbb{W}=\mathbb{V}$ we write $\underline{\End}(\mathbb{V}):=\underline{\Hom}(\mathbb{V},\mathbb{V})$.

The tensor product $\mathbb{V}\otimes\mathbb{W}$ is the graded vector space with grading given by

$$(\mathbb{V}\otimes\mathbb{W})_n:=\bigoplus_{i+j=n}V_i\otimes W_j,$$
for all $n\in \mathbb{Z}$. In particular, for every $n\geq 0$ one has a graded vector space $\mathbb{V}^{\otimes n}:= \bigotimes_{i=1}^{n} \mathbb{V}$.

One can also define the tensor product of two homogeneous linear maps $f\in \underline{\Hom}^p(\mathbb{V},\mathbb{W})$ and $g\in \underline{\Hom}^q(\mathbb{V}' ,\mathbb{W}')$ as the homogeneous linear map $f\otimes g\in \underline{\Hom}^{p+q}(\mathbb{V}\otimes\mathbb{V}',\mathbb{W}\otimes\mathbb{W}')$ defined by

\begin{equation}\label{eq:tensormaps}
(f\otimes g)(v\otimes v'):=(-1)^{|g||v|}f(v)\otimes g(v'),
\end{equation}

\noindent for any homogeneous elements $v \in \mathbb{V}$ and $v'\in \mathbb{V}'$. The tensor product of composable linear maps is composable and one has the following interchange law

\begin{equation}\label{eq:interchangelaw}
(f_1\otimes f_2)\circ (g_1\otimes g_2)=(-1)^{|f_2||g_1|}(f_1\circ g_1)\otimes (f_{2}\circ g_2),
\end{equation}

\noindent where $f_1,g_1$ and  $f_2,g_2$ are couples of composable homogeneous linear maps.

The graded vector space $\mathbb{V}^{\otimes n}$ carries a natural $\Sigma_n$-action by degree preserving maps $\hat{\epsilon}:\Sigma_n\to \underline{\End}(\mathbb{V}^{\otimes n})$, where $\Sigma_n$ denotes the symmetric group in $n$-letters. For every transposition $\sigma_i=(i, i+1), 1\leq i \leq n$ the action is defined by

$$\hat{\epsilon}(\sigma_i)(v_1\otimes\dots\otimes v_n)=(-1)^{|v_i||v_{i+1}|}v_1\otimes\cdots \otimes v_{i+1}\otimes v_i\otimes \cdots\otimes v_{n}$$

\noindent where $v_i\in \mathbb{V}$ are homogeneous elements with $1\leq i \leq n$. The action of an arbitrary permutation $\sigma\in \Sigma_n$ is defined as

\begin{equation}\label{eq:koszulsign}
\hat{\epsilon}(\sigma)(v_1\otimes \cdots \otimes v_n):=\epsilon(\sigma)v_{\sigma(1)}\otimes \cdots v_{\sigma(n)}
\end{equation}
where $\epsilon(\sigma)$ is the \textbf{Koszul sign}. Although the Koszul sign also depends on $v_1,...,v_n$ we write $\epsilon(\sigma)$ to make our notation simpler. This action naturally extends to the tensor algebra $T(\mathbb{V}):=\oplus_{n\geq 0}\mathbb{V}^{\otimes n}$, which is equipped with the graded product given by concatenation. For every $n\geq 0$ there is a graded subspace $I_{\epsilon}\subset \mathbb{V}^{\otimes n}$ defined by

$$I_{\epsilon}:=\mathrm{span}\{ v_1\otimes\cdots \otimes v_n - \epsilon(\sigma)v_{\sigma(1)}\otimes \cdots \otimes v_{\sigma(n)}; \sigma\in \Sigma_n \}.$$

The \textbf{$n$-th symmetric power} of $\mathbb{V}$ is defined by $S^{n}(\mathbb{V}):=\mathbb{V}^{\otimes n}/I_{\epsilon}$. The \textbf{symmetric algebra} of $\mathbb{V}$ is the free unital graded commutative algebra given by

$$S(\mathbb{V})=\bigoplus_{n\geq 0}S^{n}(\mathbb{V}):=\bigoplus_{n\geq 0} \mathbb{V}^{\otimes n}/I_{\epsilon}.$$

The canonical projection $\pi_S:T(\mathbb{V})\to S(\mathbb{V})$ is a surjective linear degree preserving map, whose restriction to the fixed points $\mathrm{Sym}^n(\mathbb{V})\subset \mathbb{V}^{\otimes n}$ of the action $\hat{\epsilon}:\Sigma_n\to \underline{\mathrm{End}}(\mathbb{V}^{\otimes n})$ yields an isomorphism $\mathrm{Sym}^n(\mathbb{V})\cong S^n(\mathbb{V})$. For $v_1\otimes\cdots \otimes v_n\in \mathbb{V}^{\otimes n}$ we write $v_1\vee \cdots \vee v_n:=\pi_S(v_1\otimes\cdots \otimes v_n)$ and the graded product in the symmetric algebra is denoted $\mu_S:S(\mathbb{V})\otimes S(\mathbb{V})\to S(\mathbb{V})$. For a homogeneous element $v=v_1\vee\cdots \vee v_n$ in $S^n(\mathbb{V})$ its  \textbf{weight} is the integer $n$ and its \textbf{degree} is $|v|=|v_1|+\cdots+|v_n|$.

The \textbf{exterior algebra} of $\mathbb{V}$ is defined in a similar manner by 

$$\wedge \mathbb{V}=\bigoplus_{n\geq 0}\wedge^n(\mathbb{V}),$$
where $\wedge^n(\mathbb{V}):=\mathbb{V}^{\otimes n}/I_{\chi}$ for every $n\geq 0$. Here $I_{\chi}\subset \mathbb{V}^{\otimes n}$ is defined similarly by considering the skew-symmetric $\Sigma_n$-action on $\mathbb{V}^{\otimes n}$ defined on transpositions by

$$\hat{\chi}(\sigma_i)(v_1\otimes\dots\otimes v_n)=-(-1)^{|v_i||v_{i+1}|}v_1\otimes\cdots \otimes v_{i+1}\otimes v_i\otimes \cdots\otimes v_{n}.$$

The tensor algebra $T(\mathbb{V})$ carries the structure of a coassociative, cocommutative coalgebra with coproduct given by deconcatenation. It turns out that the symmetric algebra $S(\mathbb{V})$ inherits a coalgebra structure which we recall now. A permutation $\sigma \in \Sigma_{n}$ is called a $(k,n-k)$-\textbf{unshuffle} with $0\leq k\leq n$, if $\sigma(1)<\cdots<\sigma(k)$ and $\sigma(k+1)<\cdots<\sigma(n)$. The set of $(k,n-k)$-unshuffles is denoted by $Sh(k,n-k)$. The coproduct $\Delta_S:S(\mathbb{V})\to S(\mathbb{V})\otimes S(\mathbb{V})$ is defined by the symmetric deconcatenation

\begin{equation}\label{eq:symmetriccoproduct}
\Delta_S(v_1\vee\cdots\vee v_n)=\sum_{k=0}^{n}\sum_{\sigma \in Sh(k,n-k)}\epsilon(\sigma)(v_{\sigma(1)}\vee\cdots\vee v_{\sigma(k)})\otimes (v_{\sigma(k+1)}\vee\cdots\vee v_{\sigma(n)}).
\end{equation}

This makes $S(\mathbb{V})$ a coassociative and cocommutative coalgebra, which is also coaugmented with $S(\mathbb{V})=\mathbb{R}1\oplus\bar{S}(\mathbb{V})$ where $\bar{S}(\mathbb{V}):=\oplus_{n\geq 1}S^n(\mathbb{V})$.  The induced coproduct $\bar{\Delta}_S:\bar{S}(\mathbb{V})\to \bar{S}(\mathbb{V})\otimes \bar{S}(\mathbb{V})$ is defined as $\Delta_S=1\otimes \id + \bar{\Delta}_S+ \id\otimes 1$ and it is referred to as the \textbf{reduced coproduct}.







\subsection{$L_{\infty}$-algebras}\label{subsec:Linfty}

Let $\mathfrak{g}=\bigoplus_{n\leq 0}\mathfrak{g}_n$ be a non-positively graded vector space. The structure of an $L_{\infty}$-algebra on $\mathfrak{g}$ can be seen in three equivalent ways, either as higher Lie brackets, higher symmetric functions or coderivations of the coalgebra $S(\mathbb{V})$, respectively. Each of these perspectives will be explained below. For more details, the reader can see \cite{LadaStasheff,LadaMarkl,Manetti} and references therein.

An \textbf{$L_{\infty}$-structure} on $\mathfrak{g}=\bigoplus_{n\leq 0}\mathfrak{g}_n$ is given by a family of linear maps $[\cdot]^k:\wedge^k\mathfrak{g}\to \mathfrak{g}; k\geq 1$ of degree $2-k$ called \textbf{higher brackets}, with vanishing graded Jacobiator:

\begin{equation}\label{eq:gradedJacobi}
\sum_{i+j=n+1}\sum_{\sigma \in Sh(i,n-i)}(-1)^{i(j-1)}\chi(\sigma)\left[\left[v_{\sigma(1)}\wedge\cdots\wedge v_{\sigma(i)}\right]^i\wedge v_{\sigma(i+1)}\wedge\cdots\wedge v_{\sigma(n)}\right]^{j}=0,
\end{equation}

\noindent for every $n\geq 1$.

\begin{definition}
An \textbf{$L_{\infty}$-algebra} is a pair $(\mathfrak{g},[\cdot]^{\bullet})$ given by a non-positively graded vector space equipped with an $L_{\infty}$-structure. 

\end{definition}

Let us see some examples of $L_{\infty}$-algebras.

\begin{example}\label{ex:complex}

Let $(\mathfrak{g},[\cdot]^{\bullet})$ be an $L_{\infty}$-algebra. The degree one map $\partial:=[\cdot]^1:\mathfrak{g}\to \mathfrak{g}$ satisfies $\partial^2=0$. In particular, an $L_{\infty}$-algebra in which $[\cdot]^k=0$ for every $k\geq 2$ is the same that a cochain complex of vector spaces.
\end{example}

\begin{example}\label{ex:dglas}

Let $\mathfrak{g}=\bigoplus_{n\leq 0}\mathfrak{g}_n$ be an $L_{\infty}$-algebra in which $[\cdot]^k=0$ for every $k\geq 3$. This is equivalent to having a structure of a differential graded Lie algebra (dgla for short)  on the graded vector space $\mathfrak{g}$ with bracket $[\cdot,\cdot]:=[\cdot]^2:\mathfrak{g}\otimes \mathfrak{g}\to \mathfrak{g}$ and differential $d:=[\cdot]^1:\mathfrak{g}\to \mathfrak{g}$.

\end{example}

\begin{example}\label{ex:crossedmodule}

An $L_{\infty}$-algebra $\mathfrak{g}=\mathfrak{g}_{-1}\oplus \mathfrak{g}_0$ concentrated in degrees -1 and 0 is called 2-term $L_{\infty}$-algebra. In this case, the only not necessarily trivial brackets are $[\cdot]^i:\wedge^i\mathfrak{g}\to \mathfrak{g}$ with $i=1,2,3$. As shown in \cite{BaezCrans}, the category of 2-term $L_{\infty}$-algebras is equivalent to that of semistrict Lie 2-algebras. If $[\cdot]^3=0$, one recovers the notion of a strict Lie 2-algebra, which is also equivalent to having a crossed module of Lie algebras. That is, a cochain complex of Lie algebras $\partial:\mathfrak{g}_{-1}\to \mathfrak{g}_0$ together with an action $D:\mathfrak{g}_{0}\to \mathrm{Der}(\mathfrak{g}_{-1})$ satisfying the Peiffer identities:

\begin{enumerate}
\item $D_{\partial(u)}v=\ad_uv$
\item $\partial(D_xu)=\ad_x\partial(u)$
\end{enumerate}

\noindent for every $u,v\in \mathfrak{g}_{-1}$ and $x\in \mathfrak{g}_{0}$.

\end{example}

The \emph{dec\'alage} isomorphism $S^n(\mathfrak{g}[1])\cong (\wedge^n\mathfrak{g})[n], n\geq 1$ (see \cite{Reinhold}), yields an alternative description of an $L_{\infty}$-algebra structure on a graded vector space $\mathfrak{g}=\bigoplus_{n\leq 0}\mathfrak{g}_n$. Namely, as a family of linear maps $\lambda_k:S^{k}(\mathfrak{g}[1])\to \mathfrak{g}[1], k\geq 1$ of degree 1 satisfying

\begin{equation}\label{eq:symmetricJacobi}
\sum_{i=1}^n\sum_{\sigma\in Sh(i,n-i)}\epsilon(\sigma)\lambda_{n-i+1}(\lambda_{i}(v_{\sigma(1)}\vee\cdots\vee v_{\sigma(i)})\vee v_{\sigma(i+1)}\vee\cdots \vee v_{\sigma(n)})=0,
\end{equation}
for every $n\geq 1$.

The family $\lambda_k:S^k(\mathfrak{g}[1])\to \mathfrak{g}[1]; k\geq 1$ yields a linear map $\lambda:S(\mathfrak{g}[1])\to \mathfrak{g}[1]$ via $\lambda=\sum_k\lambda_k$. This data is equivalent to a degree 1 coderivation  $d_{\lambda}:S(\mathfrak{g}[1])\to S(\mathfrak{g}[1])$ defined by

\begin{equation}\label{eq:inducedcoderivation}
d_{\lambda}:=\mu_S\circ (\lambda\otimes\id)\circ \Delta_S.
\end{equation}

Also, the maps $\lambda_k:S^k(\mathfrak{g}[1])\to \mathfrak{g}[1]$ satisfy \eqref{eq:symmetricJacobi} if and only if the induced coderivation squares to zero, i.e. $d_{\lambda}^2=0$.

\begin{remark}\label{rmk:dlambda}

The map $\underline{\mathrm{Hom}}(S(\mathfrak{g}[1]),\mathfrak{g}[1])\to \mathrm{Coder}(S(\mathfrak{g}[1])); \lambda \mapsto d_{\lambda}$ as in \eqref{eq:inducedcoderivation} is an isomorphism. In particular, an $L_{\infty}$-algebra structure on $\mathfrak{g}$ is equivalent to a degree one coderivation $d\in \mathrm{Coder}(S(\mathfrak{g}[1]))$ with $d(1)=0$ and $d^2=0$.

\end{remark}

Accordingly, an $L_{\infty}$-algebra is denoted either by $(\mathfrak{g},[\cdot]^{\bullet}), (\mathfrak{g},\lambda)$ or $(S(\mathfrak{g}[1]),d)$.


\subsection{$L_{\infty}$-algebra morphisms} Viewing an $L_{\infty}$-structure on a graded vector space as a degree 1 coderivation on its symmetric coalgebra allows us to introduce $L_{\infty}$-algebra morphisms in a simple manner.

\begin{definition}
An $L_{\infty}$-\textbf{morphism} $F:(S(\mathfrak{g}[1]),d_{\mathfrak{g}})\to (S(\mathfrak{h}[1]),d_{\mathfrak{h}})$ is a coalgebra morphism which commutes with the differentials. 
\end{definition}

That is, $F:S(\mathfrak{g}[1])\to S(\mathfrak{h}[1])$ is a degree zero linear map with the following properties:

\begin{align}
F\otimes F\circ \Delta_{S}=&\Delta_{S}\circ F \label{eq:coalgebramap}\\ 
F\circ d_{\mathfrak{g}}=&d_{\mathfrak{h}}\circ F. \label{eq:Fcommutesdifferentials}
\end{align}

Any coalgebra morphism $F:S(\mathfrak{g}[1])\to S(\mathfrak{h}[1])$ is uniquely determined by its projection onto $\mathfrak{h}[1]$, therefore equivalent to a collection of degree zero linear maps

$$ F^1_n:S^n(\mathfrak{g}
[1])\to \mathfrak{h}[1]; \quad n\geq 1.$$

An $L_{\infty}$-\textbf{morphism} $F:(S(\mathfrak{g}[1]),d_{\mathfrak{g}})\to (S(\mathfrak{h}[1]),d_{\mathfrak{h}})$ is called \textbf{strict} if $F_n^1=0$ for $n\geq 2$. This is equivalent to having $F=S(F^1_1)$ where

\begin{equation}\label{eq:symmetricextensionmap}
(SF^1_1)(v_1\vee\cdots\vee v_n)=F_1^1(v_1)\vee\cdots\vee F_1^1(v_n),
\end{equation}

\noindent is the canonical extension of $F^1_1:\mathfrak{g}[1]\to \mathfrak{h}[1]$. In this case, identity \eqref{eq:Fcommutesdifferentials} can be written in terms of symmetric brackets. More precisely, if $d_{\mathfrak{g}}$ and $d_{\mathfrak{h}}$ are respectively induced by $\lambda^{\mathfrak{g}}:S(\mathfrak{g}[1])\to \mathfrak{g}[1]$ and $\lambda^{\mathfrak{h}}:S(\mathfrak{h}[1])\to \mathfrak{h}[1]$ as in \eqref{eq:inducedcoderivation}, then \eqref{eq:Fcommutesdifferentials} is equivalent to

\begin{equation}\label{eq:strictmorphism}
\lambda^{\mathfrak{h}}\circ S(F^1_1)= F^1_1\circ \lambda^{\mathfrak{g}}.
\end{equation}

As shown in Example \ref{ex:complex}, an $L_{\infty}$-algebra $(\mathfrak{g},[\cdot]^{\bullet})$ has an underlying complex of vector spaces with differential $\partial:\mathfrak{g}\to \mathfrak{g}$ where $\partial:=[\cdot ]^{1}$. If $F$ is an $L_{\infty}$-algebra morphism, then $F^1_1:\mathfrak{g}[1]\to \mathfrak{h}[1]$ is a morphism between the underlying complexes. We say that $F$ is an  \textbf{ $L_{\infty}$-quasi-isomorphism} if the induced map in cohomology $H(F_1^1):H(\mathfrak{g},\partial)\to H(\mathfrak{h},\partial)$ is an isomorphism.







An important example of $L_{\infty}$-quasi-isomorphism is given by the minimal model of an $L_{\infty}$-algebra. An $L_{\infty}$-algebra $(\mathfrak{g},[\cdot]^{\bullet})$ is called \textbf{minimal} if $[\cdot]^1=0$. We say that it is \textbf{contractible} if $[\cdot]^k=0$ for every $k\geq 2$ and $H(\mathfrak{g},\partial)=0$ where $\partial:=[\cdot]^1:\mathfrak{g}\to \mathfrak{g}$ is the underlying cochain complex.

\begin{example}[\textbf{Kontsevich decomposition}]\label{ex:minimalcontractible}

Every $L_{\infty}$-algebra $\mathfrak{g}$ is isomorphic to a direct sum $\mathfrak{g}\cong\mathfrak{g}_m\oplus \mathfrak{g}_c$ of a minimal and a contractible $L_{\infty}$-algebra, see \cite{Kontsevich}. It turns out that $\mathfrak{g}$ is quasi-isomorphic to its minimal component $\mathfrak{g}_m$, which is unique up to quasi-isomorphism and referred to as the \textbf{minimal model} of $\mathfrak{g}$.

\end{example}

The minimal model of a strict 2-term $L_{\infty}$-algebra can be easily described in terms of crossed modules. This is explained in the next example.

\begin{example}[\textbf{Minimal model of 2-term $L_{\infty}$-algebras}]\label{ex:2termminimal}

As shown in \cite{BaezCrans}, minimal 2-term $L_{\infty}$-algebras are in one-to-one correspondence with quadruples $(\mathfrak{g}_0,V,\rho,\theta)$ where $\rho:\mathfrak{g}_0\to \mathrm{End}(V)$ is a Lie algebra representation and $\theta\in Z^3_{CE}(\mathfrak{g}_0,V)$ is a 3-cocycle in the corresponding Chevalley-Eilenberg complex. The correspondence assigns to every minimal 2-term $L_{\infty}$-algebra $\mathfrak{g}=\mathfrak{g}_{-1}\oplus \mathfrak{g}_0$ the vector space $V:=\mathfrak{g}_{-1}$ together with the representation $\rho:\mathfrak{g}_0\otimes V\to V; (x,u)\mapsto [x,u]^2$ and 3-cocycle $\theta:=[\cdot]^3$. See Thm. 55 in \cite{BaezCrans}. Isomorphic minimal 2-term $L_{\infty}$-algebras give rise to an isomorphism between the corresponding quadruples. In particular, there is a one-to-one correspondence between quasi-isomorphism classes of 2-term $L_{\infty}$-algebras and isomorphisms of quadruples $(\mathfrak{g}_0,V,\rho,[\theta])$ with $[\theta]\in H^3_{CE}(\mathfrak{g}_0,V)$. The cohomology class $[\theta]\in H^3_{CE}(\mathfrak{g}_0,V)$ is called the \textbf{characteristic class} of a 2-term $L_{\infty}$-algebra. Now we describe the minimal model of a \emph{strict} 2-term $L_{\infty}$-algebra $\mathfrak{g}=\mathfrak{g}_{-1}\oplus \mathfrak{g}_0$, hence providing an explicit description of its characteristic class. For that, let $\partial:\mathfrak{g}_{-1}\to \mathfrak{g}_{0}$ be the underlying crossed module of $\mathfrak{g}$ and consider $\mathfrak{h}=\mathfrak{h}_{-1}\oplus \mathfrak{h}_0$ with $\mathfrak{h}_{-1}:=\ker(\partial)$ and $\mathfrak{h}_0:=\mathrm{coker}(\partial)$. The choice of a splitting $h:\mathfrak{h}_0\to \mathfrak{g}_0$ of the canonical projection $\pi:\mathfrak{g}_0\to \mathfrak{h}_0$ induces a quasi-isomorphism of complexes  

\begin{equation}\label{eq:eq1ex:2termminimal}
(\iota,h):\big{(}0:\mathfrak{h}_{-1}\to \mathfrak{h}_0\big{)}\to \big{(}\partial:\mathfrak{g}_{-1}\to \mathfrak{g}_{0}\big{)}.
\end{equation}

\noindent This quasi-isomorphism of complexes is part of a \emph{weak} quasi-isomorphism of $L_{\infty}$-algebras $F:\mathfrak{h}\to \mathfrak{g}$ with $F^1_1=(\iota,h)$ and $F^1_2:\wedge^2\mathfrak{h}_{0}\to \mathfrak{g}_{-1}$ defined as $F^1_2=\sigma_* K_h$ where $K_h:\wedge^2\mathfrak{h}_0\to \mathrm{im}(\partial)$ and $\sigma:\mathrm{im}(\partial)\to \mathfrak{g}_{-1}$ is a splitting of the exact sequence $\ker(\partial)\to \mathfrak{g}_{-1}\to \mathrm{im}(\partial)$. The $L_{\infty}$-structure on $\mathfrak{h}$ has bracket of order three given by

$$\theta=d_{h^*D}\sigma_* K_h,$$

\noindent where $h^*D:\mathfrak{h}_0\times \mathfrak{h}_{-1}\to \mathfrak{h}_{-1}; (\bar x, u)\mapsto [h(\bar x),u]$ is the pullback via $h$ of the representation $\mathfrak{g}_{0}\to \mathrm{Der}(\mathfrak{g}_{-1})$. Hence, the characteristic class of the strict 2-term $L_{\infty}$-algebra $\mathfrak{g}$ is

$$[d_{h^*D}\sigma_* K_h]\in H^3_{CE}(\mathrm{coker}(\partial),\ker(\partial)).$$

\end{example}


\section{$L_{\infty}$-algebra cohomology}

In this section we recall the notion of $L_{\infty}$-algebra cohomology with coefficients in a representation up to homotopy. For that we follow \cite{Reinhold} closely. See also \cite{LadaMarkl}.


\subsection{Representations up to homotopy}

Let $(\mathfrak{g},\lambda)$ be an $L_{\infty}$-algebra with induced coderivation $d_{\lambda}:S(\mathfrak{g}[1])\to S(\mathfrak{g}[1])$ as in \eqref{eq:inducedcoderivation}. Given a graded vector space $\mathbb{V}$, the tensor product $S(\mathfrak{g}[1])\otimes \mathbb{V}$ has a natural structure of left $S(\mathfrak{g}[1])$-comodule  defined by

$$\Delta_{\mathbb{V}}:=\Delta_S\otimes \id_{\mathbb{V}}:S(\mathfrak{g}[1])\to S(\mathfrak{g}[1])\otimes \mathbb{V},$$
where $\Delta_S$ denotes the symmetric coproduct in $S(\mathfrak{g}[1])$ as in \eqref{eq:symmetriccoproduct}.

\begin{definition}\label{def:ruth}

A \textbf{representation up to homotopy (or ruth)} of $\mathfrak{g}$ on $\mathbb{V}$ is a degree one coderivation $D:S(\mathfrak{g}[1])\otimes \mathbb{V}\to S(\mathfrak{g}[1])\otimes \mathbb{V}$ satisfying:

\begin{itemize}

\item[i)] $\Delta_{\mathbb{V}}\circ D=(d_{\lambda}\otimes \id_{S(\mathfrak{g}[1])\otimes \mathbb{V}}+\id_{S(\mathfrak{g}[1])}\otimes D)\circ \Delta_{\mathbb{V}}$

\item[ii)] $D^2=0.$
\end{itemize}

\end{definition}

The set of all ruths of $(\mathfrak{g},\lambda)$ is denoted by $\mathrm{Rep}^{\infty}(\mathfrak{g})$. An element in  $\mathrm{Rep}^{\infty}(\mathfrak{g})$ is written $(\mathbb{V},D)$ or simply $\mathbb{V}$. As shown in \cite{Reinhold}, ruths can be equivalently described as actions of $S(\mathfrak{g}[1])$ on a graded vector space $\mathbb{V}$. More precisely, a representation up to homotopy of $\mathfrak{g}$ on $\mathbb{V}$ is equivalent to a linear map

\begin{equation}\label{eq:ruthasmap}
\rho:S(\mathfrak{g}[1])\otimes \mathbb{V}\to \mathbb{V},
\end{equation}
of degree one satisfying the following condition

\begin{equation}\label{eq:equivariance}
    \rho \circ (d_{\lambda}\otimes \id_{\mathbb{V}})+\rho\circ(\id_{S(\mathfrak{g}[1])}\otimes \rho)\circ(\Delta_S \otimes\id_{\mathbb{V}})=0.
\end{equation}

In agreement with this perspective, a representation up to homotopy of $\mathfrak{g}$ is denoted by $(\mathbb{V},\rho)$ or simply $\mathbb{V}$.

\begin{remark} One can think of Definition \ref{def:ruth} as a coalgebra version of the well-known fact that a Lie algebra representation is fully characterized by its Chevalley-Eilenberg differential. Also, Definition \ref{def:ruth} resembles the notion of representation up to homotopy as in \cite{AbadCrainic} and \cite{GraciaSazMehta}. However, describing a representation up to homotopy as an action map $\rho:S(\mathfrak{g}[1])\otimes \mathbb{V}\to \mathbb{V}$ satisfying \eqref{eq:equivariance} is more geometric and we follow this perspective.
\end{remark}

\begin{example}[\textbf{Adjoint representation}]\label{ex:adjointruth}
Let $(\mathfrak{g},\lambda)$ be an $L_{\infty}$-algebra. The \textbf{adjoint representation} of $\mathfrak{g}$ is the representation up to homotopy defined by the higher brackets
 
$$\ad:S(\mathfrak{g}\left[1\right])\otimes \mathfrak{g}\left[1\right]\to \mathfrak{g}\left[1\right],\quad \ad(x\otimes y):=\lambda(x\vee y).$$

\end{example}

Just as with Lie algebras and their representations, looking at ruths of a given $L_{\infty}$-algebra allows us to define two algebraic objects. Namely, its category of representations up to homotopy and the Chevalley-Eilenberg cohomology with coefficients in a representation up to homotopy. The former is described in what follows.


\begin{definition}\label{def:ruthmorphism}
Let $(\mathbb{V},\rho)$ and $(\mathbb{V}',\rho')$ be two representations up to homotopy of $\mathfrak{g}$ and $\mathfrak{g}'$, respectively. A \textbf{ruth morphism} $(F,f):(\mathbb{V},\rho)\to (\mathbb{V}',\rho')$ is given by a couple of maps $F:\mathfrak{g}\to \mathfrak{g}'$ which is an $L_{\infty}$-morphism and $f:\mathbb{V}'\to \mathbb{V}$ a degree-preserving map satisfying
\begin{equation}\label{eq:ruthmorphism}
    f\circ \rho' \circ (F\otimes \id_{\mathbb{V}'})=\rho\circ(\id_{S(\mathfrak{g}[1])}\otimes f).
\end{equation}
\end{definition}

In the particular case that $\mathfrak{g}=\mathfrak{g}'$ and $F=\id$, we simply say that the map $f:\mathbb{V}'\to \mathbb{V}$ is \textbf{ruth map}. One easily checks that if $\mathfrak{g}$ and $\mathfrak{g}'$ are Lie algebras, $\mathbb{V}$ and $\mathbb{V}'$ are usual representations, then Definition \ref{def:ruthmorphism} recovers the known notion of morphism of Lie algebra representations.

One observes that ruth morphisms can be composed according to the next proposition, whose proof is a straightforward computation.

\begin{proposition}\label{compositionequiavrianmaps}

Let $(\mathbb{V},\rho), (\mathbb{V}',\rho')$ and $(\mathbb{V}'',\rho'')$ be representations up to homotopy of $\mathfrak{g},\mathfrak{g}'$ and $\mathfrak{g}''$, respectively. If $(F,f):(\mathbb{V},\rho)\to (\mathbb{V}',\rho')$ and $(F',f'):(\mathbb{V}',\rho')\to (\mathbb{V}'',\rho'')$ are ruths morphisms, then
\[(F'\circ F, f\circ f'):(\mathbb{V},\rho)\to (\mathbb{V}'',\rho''),\]
is a ruths morphism.

\end{proposition}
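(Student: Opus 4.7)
The statement has two parts to verify: that $F' \circ F$ is again an $L_\infty$-morphism, and that the pair $(F' \circ F, f \circ f')$ satisfies the equivariance identity \eqref{eq:ruthmorphism}. The first part is standard and follows immediately from the fact that composition of coalgebra morphisms commuting with the coderivations is again of the same type; no real work is needed, and I would simply remark on it.

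For the second part, the plan is a direct string of equalities starting from the left-hand side of the desired equivariance and chaining the two given equivariance conditions. More precisely, I would begin with
\begin{equation*}
(f\circ f')\circ \rho''\circ \bigl((F'\circ F)\otimes \id_{\mathbb{V}''}\bigr)
\end{equation*}
and rewrite $(F'\circ F)\otimes \id_{\mathbb{V}''}=(F'\otimes \id_{\mathbb{V}''})\circ (F\otimes \id_{\mathbb{V}''})$ via the interchange law \eqref{eq:interchangelaw}, which here introduces no Koszul sign since $\id_{\mathbb{V}''}$ is of degree zero. Applying the equivariance of $(F',f')$ to the inner composition transforms the expression into
\begin{equation*}
f\circ \rho'\circ (\id_{S(\mathfrak{g}'[1])}\otimes f')\circ (F\otimes \id_{\mathbb{V}''}).
\end{equation*}

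The key intermediate step is then to observe that, again by the interchange law with $f'$ and $F$ of degree zero, one has the identity of maps $S(\mathfrak{g}[1])\otimes \mathbb{V}''\to S(\mathfrak{g}'[1])\otimes \mathbb{V}'$,
\begin{equation*}
(\id_{S(\mathfrak{g}'[1])}\otimes f')\circ (F\otimes \id_{\mathbb{V}''})=F\otimes f'=(F\otimes \id_{\mathbb{V}'})\circ (\id_{S(\mathfrak{g}[1])}\otimes f').
\end{equation*}
Substituting this, the expression becomes $f\circ \rho'\circ (F\otimes \id_{\mathbb{V}'})\circ (\id_{S(\mathfrak{g}[1])}\otimes f')$, at which point the equivariance of $(F,f)$ rewrites the first three factors as $\rho\circ (\id_{S(\mathfrak{g}[1])}\otimes f)$. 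Finally, one more use of the interchange law collapses $(\id\otimes f)\circ (\id\otimes f')$ into $\id_{S(\mathfrak{g}[1])}\otimes (f\circ f')$, yielding exactly the right-hand side of \eqref{eq:ruthmorphism} for the composed pair.

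There is no real obstacle here; the proof is a three-line diagram chase whose only subtlety is bookkeeping the sources and targets of each tensor factor and verifying that the relevant Koszul signs all vanish because the identity maps and the degree-preserving maps $f, f', F, F'$ are all of degree zero. I would present it as a single displayed chain of equalities with the two invocations of \eqref{eq:ruthmorphism} and the two invocations of \eqref{eq:interchangelaw} indicated at the appropriate steps.
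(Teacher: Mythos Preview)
Your proposal is correct and is precisely the straightforward computation the paper alludes to; the paper omits the details entirely, stating only that the proof is a straightforward computation. Your chain of equalities using \eqref{eq:ruthmorphism} twice and the interchange law \eqref{eq:interchangelaw} (with vanishing Koszul signs since all maps involved are degree zero) is exactly what is needed.
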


Given an $L_{\infty}$-algebra $\mathfrak{g}$, its \textbf{category of representations up to homotopy} is the category $\mathcal{R}ep_{\infty}(\mathfrak{g})$ whose objects are ruths of $\mathfrak{g}$ and morphisms are defined as ruth maps. 

One observes that an $L_{\infty}$-algebra morphism $F:\mathfrak{g}\to \mathfrak{g}'$ yields a pullback $F^*:\mathrm{Rep}_{\infty}(\mathfrak{g}')\to \mathrm{Rep}_{\infty}(\mathfrak{g})$. If $(\mathbb{V}',\rho')\in \mathrm{Rep}_{\infty}(\mathfrak{g}')$ is a ruth, the \textbf{pullback ruth} $(\mathbb{V}',F^*\rho')\in \mathrm{Rep}_{\infty}(\mathfrak{g})$ is defined as

\begin{equation}\label{eq:pullbackruth}
F^*\rho':S(\mathfrak{g}[1])\otimes \mathbb{V}'\to \mathbb{V}',\quad F^*\rho':=\rho'\circ (F\otimes  \id_{\mathbb{V}'}).
\end{equation}

\begin{remark}\label{decomposition.eq.maps}

Let $(\mathbb{V},\rho)$ and $(\mathbb{V}',\rho')$ be representations up to homotopy of $\mathfrak{g}$ and $\mathfrak{g}'$, respectively. Any ruth morphism $(F,f):(\mathbb{V},\rho)\to (\mathbb{V}',\rho')$ factors through a pair of ruth morphisms $(F,\id_{\mathbb{V}'}):(\mathbb{V}',F^*\rho')\to (\mathbb{V}',\rho')$ and $(\id_{\mathfrak{g}},f):(\mathbb{V},\rho)\to (\mathbb{V}',F^*\rho')$. That is, $(F,f)=(\id_{\mathfrak{g}},f)\circ (F,\id_{\mathbb{V}'})$.

\end{remark}


We proceed now to recall the Chevalley-Eilenberg complex of an $L_{\infty}$-algebra together with a representation up to homotopy. For details, see \cite{Dehling} and \cite{Reinhold} and the references therein. Let $(\mathfrak{g},\lambda)$ be an $L_{\infty}$-algebra and $\mathbb{V}$ a graded vector space. Consider the graded vector space

\begin{equation}\label{eq:cochains}
C(\mathfrak{g},\mathbb{V}):=\bigoplus_{p\in \mathbb{Z}} C^p(\mathfrak{g},\mathbb{V}),
\end{equation}

\noindent where $C^p(\mathfrak{g},\mathbb{V})=\underline{\Hom}^p(S(\mathfrak{g}\left[1\right]),\mathbb{V})$. In other words $C(\mathfrak{g},\mathbb{V}):=\underline{\Hom}(S(\mathfrak{g}\left[1\right]),\mathbb{V})$ is the graded vector space of graded linear maps on $S(\mathfrak{g}[1])$ with values in $\mathbb{V}$. Recall that $\mathfrak{g}$ has an associated coproduct $\Delta_S:S(\mathfrak{g}[1])\to S(\mathfrak{g}[1])\otimes S(\mathfrak{g}[1])$ as in \eqref{eq:symmetriccoproduct} and induced degree 1 coderivation $d_{\lambda}:S(\mathfrak{g}[1])\to S(\mathfrak{g}[1])$ as in \eqref{eq:inducedcoderivation}.

A representation up to homotopy $(\mathbb{V},\rho)\in \mathrm{Rep}_{\infty}(\mathfrak{g})$ determines a cochain complex $(C(\mathfrak{g},\mathbb{V}),D_{\rho})$ where the differential $D_{\rho}:C(\mathfrak{g},\mathbb{V})\to C(\mathfrak{g},\mathbb{V})$ is given by

\begin{equation}\label{eq:CEdifferential} 
D_{\rho}\alpha:=\rho\circ (\id_{S(\mathfrak{g}\left[1\right])}\otimes \alpha)\circ\Delta_S -(-1)^{p}\alpha\circ d_{\lambda},
\end{equation}

\noindent for every homogeneous element $\alpha \in C^p(\mathfrak{g},\mathbb{V})$. 


\begin{definition}\label{def:Linftycohomology}

The cohomology $H(\mathfrak{g},\mathbb{V})$ of the complex $(C(\mathfrak{g},\mathbb{V}),D_{\rho})$ is called the \textbf{$L_{\infty}$- algebra cohomology} of $\mathfrak{g}$ with coefficients in $\mathbb{V}$.

\end{definition}

If  $\mathfrak{g}$ is a Lie algebra and $\mathbb{V}$ is a usual representation, then the cochain complex $(C(\mathfrak{g},\mathbb{V}),D_{\rho})$ coincides with the usual Chevalley-Eilenberg complex associated to a Lie algebra representation. In particular, the Chevalley-Eilenberg cohomology of a Lie algebra is a particular instance of the $L_{\infty}$-algebra cohomology with coefficients in a ruth. For more details see \cite{Reinhold}.



The $L_{\infty}$-algebra cohomology with coefficients is functorial as stated in the next proposition.

\begin{proposition}\label{equivariance}

Let $(F,f):(\mathbb{V},\rho)\to (\mathbb{V}',\rho')$ be a morphism between ruths of $\mathfrak{g}$ and $\mathfrak{g}'$. The pullback map $F^*:C(\mathfrak{g}',\mathbb{V}')\to C(\mathfrak{g},\mathbb{V}); \alpha\mapsto F^*\alpha:=f\circ \alpha \circ F$  is a cochain map and hence induces a well-defined map $F^*:H(\mathfrak{g}',\mathbb{V}')\to H(\mathfrak{g},\mathbb{V}')$.

\end{proposition}
\begin{proof}
Let us see that $F^* D_{\rho'}=D_{\rho}F^*$. Indeed, if $\alpha$ is a homogeneous element of degree $p$ in  $C(\mathfrak{h},\mathbb{W})$, then
\begin{align*}
    F^*(D_{\rho'}\alpha)=&f\circ (\rho'\circ (\id_{S(\mathfrak{g}'\left[1\right])}\otimes \alpha)\circ\Delta_S -(-1)^{p}\alpha\circ d_{\lambda'})\circ F\\
    =&f\rho'(\id_{S(\mathfrak{g}'[1])}\otimes \alpha)(F\otimes F)\Delta_S-(-1)^{p} f\circ \alpha \circ F\circ d\\
    =&f\rho'(F\otimes \id_{\mathbb{V}})(\id_{S(\mathfrak{g}'[1])}\otimes \alpha)(\id_{S(\mathfrak{g}'[1])}\otimes F)\Delta_S-(-1)^{p}(F^*\alpha) d\\
    =&\rho(\id_{S(\mathfrak{g}[1])}\otimes f)(\id_{S(\mathfrak{g}[1])}\otimes \alpha)(\id_{S(\mathfrak{g}[1])}\otimes F)\Delta_S-(-1)^p(F^*\alpha) d,\quad \text{by equation (\ref{eq:ruthmorphism}})\\
    =&\rho(\id_{S(\mathfrak{g}[1])}\otimes(F^*\alpha))\Delta_S-(-1)^p(F^*\alpha)d\\
    =&D_{\rho}(F^*\alpha).
\end{align*}
\end{proof}

\subsection{Tensor products}

Recall that given graded vector spaces $\mathbb{V}_1,\mathbb{V}_2$, the \textbf{twisting map} is defined by

\begin{equation}\label{eq:twistingmap}
T:\mathbb{V}_1\otimes \mathbb{V}_2\to \mathbb{V}_2\otimes \mathbb{V}_1; v\otimes w\mapsto (-1)^{|v||w|}w\otimes v.
\end{equation} 

The following result shows that representations up to homotopy have a natural tensor product operation.

\begin{theorem}\label{thm:tensorproducts}

Let $\mathfrak{g}$ be  a $L_{\infty}$-algebra and  $(\mathbb{V}_1,\rho_1), (\mathbb{V}_2,\rho_2)\in \mathrm{Rep}_{\infty}(\mathfrak{g})$. The tensor product $\mathbb{V}_1\otimes\mathbb{V}_2$ inherits the structure of a representation up to homotopy given by

$$\rho_1\otimes \rho_2:S(\mathfrak{g}\left[1\right])\otimes (\mathbb{V}_1\otimes\mathbb{V}_2)\to \mathbb{V}_1\otimes\mathbb{V}_2,\quad \rho:=\rho_1\otimes\id_{\mathbb{V}_2}+(\id_{\mathbb{V}_1}\otimes\rho_2)\circ (T\otimes \id_{\mathbb{V}_2})$$

\noindent where $T:S(\mathfrak{g}\left[1\right])\otimes\mathbb{V}_1\to \mathbb{V}_1 \otimes S(\mathfrak{g}\left[1\right])$ is the twisting map. More concretely, on homogeneous elements $x \in S(\mathfrak{g}\left[1\right]), v\in \mathbb{V}_1$ and $w\in \mathbb{V}_2$ one has

\begin{equation}\label{eq:tensorruthhomogeneous}
\rho_1\otimes \rho_2(x\otimes v\otimes w)=\rho_1(x\otimes v)\otimes w+(-1)^{(|x|+1)|v|}v\otimes \rho_2(x\otimes w).
\end{equation}

\end{theorem}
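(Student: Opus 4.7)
The plan is to verify that the map $\rho:=\rho_1\otimes \id_{\mathbb{V}_2}+(\id_{\mathbb{V}_1}\otimes \rho_2)\circ (T\otimes \id_{\mathbb{V}_2})$ has degree one and satisfies the equivariance condition \eqref{eq:equivariance}, which together constitute the definition of a representation up to homotopy (via the action-map perspective). Degree one is immediate: both $\rho_1$ and $\rho_2$ have degree one, and the twisting map $T$ is degree preserving. So the core content is checking equivariance.

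To check \eqref{eq:equivariance}, I would evaluate
\[
\rho\circ (d_{\lambda}\otimes \id_{\mathbb{V}_1\otimes \mathbb{V}_2}) + \rho\circ (\id_{S(\mathfrak{g}[1])}\otimes \rho)\circ (\Delta_S\otimes \id_{\mathbb{V}_1\otimes \mathbb{V}_2})
\]
on a homogeneous element $x\otimes v\otimes w\in S(\mathfrak{g}[1])\otimes \mathbb{V}_1\otimes \mathbb{V}_2$, using formula \eqref{eq:tensorruthhomogeneous}. Expanding each occurrence of $\rho$ via its two defining summands produces four families of terms: (i) the ``pure $\rho_1$'' terms acting only on $v$ and tensored with $w$; (ii) the ``pure $\rho_2$'' terms acting only on $w$ and tensored with $v$; (iii) cross terms in which a piece of the coproduct of $x$ acts on $v$ via $\rho_1$ and the complementary piece acts on $w$ via $\rho_2$; and (iv) the cross terms with the roles reversed. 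The strategy is to show that families (i) and (ii) separately collapse via the equivariance identity \eqref{eq:equivariance} already satisfied by $\rho_1$ and $\rho_2$, while families (iii) and (iv) cancel against each other.

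For (i) and (ii) the argument is straightforward: using Sweedler notation $\Delta_S x=\sum x_{(1)}\otimes x_{(2)}$, the pure $\rho_1$ summands reproduce exactly $(\rho_1(d_\lambda x\otimes v)+\rho_1(\id\otimes \rho_1)(\Delta_S x\otimes v))\otimes w$, which vanishes by \eqref{eq:equivariance} applied to $\rho_1$; and symmetrically for (ii) with a global Koszul sign $(-1)^{|x||v|}$ inherited from \eqref{eq:tensorruthhomogeneous}. For the cross families (iii)--(iv), the plan is to rewrite family (iv) by applying cocommutativity of the symmetric coproduct, $T\circ \Delta_S=\Delta_S$, which interchanges $x_{(1)}$ and $x_{(2)}$ at the cost of a Koszul sign. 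After this rewrite, (iii) and (iv) become identical summations with opposite signs and cancel.

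The main obstacle is bookkeeping of Koszul signs: the twisting map in the definition of $\rho$ produces factors like $(-1)^{(|x_{(i)}|+1)|v|}$ (reflecting the degree shift $[1]$ on $\mathfrak{g}$), and these must combine with the signs coming from $T\circ \Delta_S=\Delta_S$ and from commuting $\rho_2$ past $\rho_1(x_{(2)}\otimes v)$ to yield the exact cancellation between (iii) and (iv). I would carry out this sign count on homogeneous elements in Sweedler notation once and for all; the rest is formal. Finally, I would note that the whole construction is manifestly natural in $(\mathbb{V}_1,\rho_1)$ and $(\mathbb{V}_2,\rho_2)$, so it descends to an honest bifunctor on $\mathcal{R}ep_{\infty}(\mathfrak{g})$.
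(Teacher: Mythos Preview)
Your plan is correct and is precisely the ``straightforward but long computation'' the paper alludes to; the paper itself does not spell out the proof but defers it to \cite{Herrera-Carmona}, and the direct verification you outline---splitting into pure $\rho_1$, pure $\rho_2$, and two cross families, then using the equivariance of $\rho_1,\rho_2$ for the pure terms and cocommutativity of $\Delta_S$ to cancel the cross terms---is exactly how that computation goes. One small remark: the extra ``$+1$'' in the sign $(-1)^{(|x|+1)|v|}$ is not a consequence of the shift on $\mathfrak{g}$ but of the Koszul convention \eqref{eq:tensormaps} applied to $\id_{\mathbb{V}_1}\otimes\rho_2$ with $|\rho_2|=1$; keeping this in mind, and remembering that the same convention inserts a factor $(-1)^{|x_{(1)}|}$ when applying $\id\otimes\rho$ in \eqref{eq:equivariance}, the sign bookkeeping for the cancellation of (iii) against (iv) goes through exactly as you sketch.
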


The proof of Theorem \ref{thm:tensorproducts} is a straightforward but long computation which can be found in \cite{Herrera}. The \textbf{tensor product ruth} of $(\mathbb{V}_1,\rho_{1})$ and $(\mathbb{V}_2,\rho_2)$ is denoted by $(\mathbb{V}_1\otimes \mathbb{V}_2,\rho_1\otimes \rho_2)$.

In particular, if $n\geq 2$ and $(\mathbb{V},\rho)\in \mathrm{Rep}_{\infty}(\mathfrak{g})$ then the $n^{th}$-tensor power $\mathbb{V}^{\otimes n}$ inherits the structure of a ruth defined as follows:

\begin{equation}\label{eq:ntensorruth}
\rho^{\otimes n}:=\rho\otimes \id^{\otimes n-1}+\sum_{k=2}^{n}(\id^{\otimes k-1}\otimes\rho \otimes \id^{\otimes n-k})\circ T^k,
\end{equation}

\noindent where $T^k:\mathbb{V}^{\otimes n}\to \mathbb{V}^{\otimes n}; k\geq 2$, is the degree preserving map

\begin{equation}\label{eq:kshiftmap}
T^k:=(\id^{\otimes k-2}\otimes T\otimes \id^{\otimes n-k})\circ T^{k-1},
\end{equation}

\noindent and $T:\mathbb{V}\otimes \mathbb{V}\to \mathbb{V}\otimes\mathbb{V}, v\otimes w\mapsto (-1)^{|v||w|}w\otimes v$ is the twisting map. More concretely, on homogeneous elements $x\in S(\mathfrak{g}[1])$ and $v_1,...,v_n\in \mathbb{V}$ one has

$$T^k(v_1\otimes\cdots\otimes v_n)=(-1)^{|v_1|(\sum_{j=2}^{k}|v_j|)}v_2\otimes\cdots\otimes v_k\otimes v_1\otimes v_{k-1}\otimes \cdots\otimes v_n$$

\noindent and

\begin{align*}
\rho^{\otimes n}(x\otimes v_1\otimes&\cdots \otimes v_n)=\rho(x\otimes v_1)\otimes v_2\otimes\cdots\otimes v_n+\\
&+\sum_{k=2}^{n}(-1)^{(|x|+1)(\sum_{j=1}^{k-1}|v_j|)}v_1\otimes\cdots \otimes v_{k-1}\otimes \rho (x\otimes v_k)\otimes v_{k+1}\otimes\cdots\otimes v_n.
\end{align*}
 
\subsection{Cochain products}\label{subsec:cochainproducts}

Let $\mathfrak{g}$ be a $L_{\infty}$-algebra and $\mathbb{V}_i; i=1,2,3$ graded vector spaces. A degree preserving map $m:\mathbb{V}_1\otimes \mathbb{V}_2\to \mathbb{V}_3$ yields a degree zero linear map

\begin{equation}\label{eq:cochainmap}
\wedge_m:C(\mathfrak{g},\mathbb{V}_1)\otimes C(\mathfrak{g},\mathbb{V}_2)\to C(\mathfrak{g},\mathbb{V}_3),
\end{equation}

\noindent defined on homogeneous elements $\alpha \in C^p(\mathfrak{g},\mathbb{V}_1)$ and  $\beta\in C^q(\mathfrak{g},\mathbb{V}_2)$ by 

\begin{equation}\label{eq:cochainproduct}
\alpha\wedge_{m}\beta:=m\circ \alpha\otimes \beta \circ \Delta_S.
\end{equation}

The element $\alpha\wedge_m\beta \in C^{p+q}(\mathfrak{g},\mathbb{V}_3)$ is called the \textbf{cochain product} between $\alpha$ and $\beta$ \textbf{along $m$}.

We say that $m:\mathbb{V}_1\otimes \mathbb{V}_2\to \mathbb{V}_3$ is \textbf{symmetric} if $m=m\circ T$. Similarly, if $m=-m\circ T$ one says that $m$ is \textbf{skew-symmetric}. The following can be verified by a straightforward computation.

\begin{proposition}\label{prop:symmetrycochain}
Let $\mathfrak{g}$ be a $L_{\infty}$-algebra and $m:\mathbb{V}_1\otimes \mathbb{V}_2\to \mathbb{V}_3$ a degree zero map with induced cochain product $\wedge_m:C(\mathfrak{g},\mathbb{V}_1)\otimes C(\mathfrak{g},\mathbb{V}_2)\to C(\mathfrak{g},\mathbb{V}_3)$. Then $\wedge_m$ is symmetric (resp. skew-symmetric) if $m$ is symmetric (resp. skew-symmetric).

\end{proposition}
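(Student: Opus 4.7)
The proof will be a direct unwinding of the definition of $\wedge_m$ combined with the cocommutativity of $\Delta_S$ and the Koszul-sign manipulation of the twist $T$. Since ``symmetric'' for $m$ only makes sense when the two factors agree, I read the statement as requiring $\mathbb{V}_1=\mathbb{V}_2=:\mathbb{V}$, and I want to show that for homogeneous $\alpha\in C^p(\mathfrak{g},\mathbb{V})$ and $\beta\in C^q(\mathfrak{g},\mathbb{V})$ one has $\alpha\wedge_m\beta=\pm(-1)^{pq}\beta\wedge_m\alpha$, with the sign matching that of $m$ with respect to $T$.

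The plan is to use three ingredients, which I would recall (or lightly justify) up front. First, the symmetric coalgebra is cocommutative, i.e. $T_{S,S}\circ\Delta_S=\Delta_S$, where $T_{S,S}\colon S(\mathfrak{g}[1])\otimes S(\mathfrak{g}[1])\to S(\mathfrak{g}[1])\otimes S(\mathfrak{g}[1])$ is the twist; this is immediate from the definition \eqref{eq:symmetriccoproduct} of $\Delta_S$ in terms of unshuffles. Second, the graded naturality of the twisting map \eqref{eq:twistingmap} yields the interchange
\[
(\alpha\otimes\beta)\circ T_{S,S}\;=\;(-1)^{|\alpha||\beta|}\,T_{\mathbb{V},\mathbb{V}}\circ(\beta\otimes\alpha),
\]
which I would verify by evaluating both sides on a homogeneous element $x\otimes y\in S(\mathfrak{g}[1])\otimes S(\mathfrak{g}[1])$ and checking that the resulting Koszul signs agree (a short mod-$2$ bookkeeping, using $|f|$ with $f=\alpha,\beta$ and $|x|,|y|$). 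Third, (skew-)symmetry of $m$ is by hypothesis the identity $m\circ T_{\mathbb{V},\mathbb{V}}=\pm m$.

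Once these are in place the main calculation is a two-line chain. Starting from the definition \eqref{eq:cochainproduct}, I insert cocommutativity and then commute the twist past $\alpha\otimes\beta$:
\begin{align*}
\alpha\wedge_m\beta
\;=\;m\circ(\alpha\otimes\beta)\circ\Delta_S
\;&=\;m\circ(\alpha\otimes\beta)\circ T_{S,S}\circ\Delta_S\\
\;&=\;(-1)^{|\alpha||\beta|}\,m\circ T_{\mathbb{V},\mathbb{V}}\circ(\beta\otimes\alpha)\circ\Delta_S\\
\;&=\;\pm(-1)^{|\alpha||\beta|}\,m\circ(\beta\otimes\alpha)\circ\Delta_S
\;=\;\pm(-1)^{|\alpha||\beta|}\,\beta\wedge_m\alpha,
\end{align*}
with the sign $+$ (resp.\ $-$) according to whether $m$ is symmetric (resp.\ skew-symmetric). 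This gives the desired graded (skew-)symmetry of $\wedge_m$.

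There is no real obstacle here; the one place to be careful is the sign in the naturality identity for $T$, since it mixes the degrees of the cochains $\alpha,\beta$ with the internal degrees of the arguments in $S(\mathfrak{g}[1])$. I would therefore do that sign check explicitly on homogeneous elements, and afterwards the conclusion follows formally from cocommutativity of $\Delta_S$ and the hypothesis on $m$.
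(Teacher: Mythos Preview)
Your proof is correct and is exactly the straightforward computation the paper has in mind; the paper itself omits the argument entirely, merely stating that the result ``can be verified by a straightforward computation.'' Your three ingredients—cocommutativity of $\Delta_S$, the graded naturality of the twist, and the hypothesis $m\circ T=\pm m$—are precisely what is needed, and your chain of equalities is the canonical way to assemble them.
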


Assume now that $(\mathbb{V}_i,\rho_i)\in \mathrm{Rep}_{\infty}(\mathfrak{g}); i=1,2,3$ are representations up to homotopy. It is reasonable to require $m:\mathbb{V}_1\otimes \mathbb{V}_2\to \mathbb{V}_3$ to be an equivariant map, that is

\begin{equation}\label{eq:equivariantcochainproduct}
m\circ (\rho_1\otimes \rho_2)=\rho_3\circ (\id_{S(\mathfrak{g}[1])}\otimes m),
\end{equation}

\noindent where $\rho_1\otimes \rho_2$ denotes the tensor product ruth on $\mathbb{V}_1\otimes \mathbb{V}_2$ as in Theorem \ref{thm:tensorproducts}. In this case, the following Leibniz identity holds.

\begin{proposition}\label{prop:leibnizcochain}
Let $\mathfrak{g}$ be an $L_{\infty}$-algebra and $(\mathbb{V}_i,\rho_i)\in\mathrm{Rep}_{\infty}(\mathfrak{g})$ ruths with induced differentials $D_{\rho_i}:C(\mathfrak{g},\mathbb{V}_i)\to C(\mathfrak{g},\mathbb{V}_i)$ as in \eqref{eq:CEdifferential}. If $m:\mathbb{V}_1\otimes \mathbb{V}_2\to \mathbb{V}_3$ is equivariant, then

    \begin{equation}\label{eq:leibnizcochain}
    D_{\rho_3}(\alpha \wedge_{m}\beta)=(D_{\rho_1}\alpha)\wedge_{m}\beta +(-1)^{|\alpha|}\alpha\wedge_{m} (D_{\rho_2}\beta),
    \end{equation}

\noindent for every homogeneus elements $\alpha\in C(\mathfrak{g},\mathbb{V}_1)$ and $\beta\in C(\mathfrak{g},\mathbb{V}_2)$.

\end{proposition}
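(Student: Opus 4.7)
The plan is to expand $D_{\rho_3}(\alpha\wedge_m\beta)$ directly from the definitions in \eqref{eq:CEdifferential} and \eqref{eq:cochainproduct}, obtaining
\[
D_{\rho_3}(\alpha\wedge_m\beta)=\rho_3\circ(\id\otimes m\circ(\alpha\otimes\beta)\circ\Delta_S)\circ\Delta_S-(-1)^{|\alpha|+|\beta|}\,m\circ(\alpha\otimes\beta)\circ\Delta_S\circ d_{\lambda},
\]
and to match the resulting four summands, after using equivariance and the coderivation property, with the four summands obtained from expanding the right-hand side of \eqref{eq:leibnizcochain} via $D_{\rho_1}\alpha$ and $D_{\rho_2}\beta$.

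For the first (``action'') term, I would first push $\id\otimes m$ through using the interchange law \eqref{eq:interchangelaw}, then invoke coassociativity $(\id\otimes \Delta_S)\circ\Delta_S=(\Delta_S\otimes\id)\circ\Delta_S$, and finally use the equivariance hypothesis \eqref{eq:equivariantcochainproduct} to replace $\rho_3\circ(\id\otimes m)$ by $m\circ(\rho_1\otimes\rho_2)$. Expanding the tensor-product ruth as in Theorem \ref{thm:tensorproducts},
\[
\rho_1\otimes\rho_2=\rho_1\otimes\id_{\mathbb{V}_2}+(\id_{\mathbb{V}_1}\otimes\rho_2)\circ(T\otimes\id_{\mathbb{V}_2}),
\]
yields two contributions. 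The first one rearranges into $m\circ((\rho_1\circ(\id\otimes\alpha)\circ\Delta_S)\otimes\beta)\circ\Delta_S$, which is precisely the ``$\rho$-part'' of $(D_{\rho_1}\alpha)\wedge_m\beta$. The second contribution passes the twisting $T:S(\mathfrak{g}[1])\otimes\mathbb{V}_1\to\mathbb{V}_1\otimes S(\mathfrak{g}[1])$ across $\alpha$, which, by the Koszul sign rule \eqref{eq:koszulsign} and the interchange law, produces exactly the Koszul sign $(-1)^{|\alpha|}$ and rearranges into the ``$\rho$-part'' of $(-1)^{|\alpha|}\alpha\wedge_m(D_{\rho_2}\beta)$.

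For the second (``bracket'') term, the coderivation property of $d_{\lambda}$ on the symmetric coalgebra gives
\[
\Delta_S\circ d_{\lambda}=(d_{\lambda}\otimes\id+\id\otimes d_{\lambda})\circ\Delta_S,
\]
and applying the interchange law \eqref{eq:interchangelaw} to $(\alpha\otimes\beta)\circ(d_{\lambda}\otimes\id)$ and $(\alpha\otimes\beta)\circ(\id\otimes d_{\lambda})$ produces two summands, one weighted by $(-1)^{|\beta|\cdot|d_{\lambda}|}=(-1)^{|\beta|}$. Combining these with the external prefactor $-(-1)^{|\alpha|+|\beta|}$ yields exactly $-(-1)^{|\alpha|}m\circ((\alpha\circ d_{\lambda})\otimes\beta)\circ\Delta_S-(-1)^{|\alpha|+|\beta|}(-1)^{|\alpha|}\,(-1)^{|\alpha|}m\circ(\alpha\otimes(\beta\circ d_{\lambda}))\circ\Delta_S$, which, after the signs are reassembled, coincides with the ``$d_{\lambda}$-parts'' of $(D_{\rho_1}\alpha)\wedge_m\beta$ and of $(-1)^{|\alpha|}\alpha\wedge_m(D_{\rho_2}\beta)$, respectively.

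The main obstacle is purely bookkeeping: the argument is a long but mechanical manipulation of tensor products, whose delicate point is keeping the Koszul signs consistent across the repeated use of \eqref{eq:tensormaps} and \eqref{eq:interchangelaw}. In particular, the most error-prone steps are (i) pushing the twisting $T$ past $\alpha$ in the second contribution of the action term, which must reproduce precisely $(-1)^{|\alpha|}$ and not any other Koszul sign, and (ii) reconciling the sign $(-1)^{|\beta|}$ coming from the interchange law in the bracket term with the overall prefactor $-(-1)^{|\alpha|+|\beta|}$ so as to recover the expected $+$ and $(-1)^{|\alpha|}$ signs in \eqref{eq:leibnizcochain}. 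Once all signs are tracked, no additional input beyond equivariance, coassociativity of $\Delta_S$, and the coderivation identity for $d_{\lambda}$ is required.
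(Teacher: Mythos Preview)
Your proposal is correct and follows essentially the same route as the paper's proof: expand $D_{\rho_3}(\alpha\wedge_m\beta)$ from \eqref{eq:CEdifferential} and \eqref{eq:cochainproduct}, use equivariance \eqref{eq:equivariantcochainproduct} to rewrite the action term via the tensor-product ruth, invoke coassociativity and cocommutativity of $\Delta_S$ together with the coderivation identity for $d_{\lambda}$, and then match the four resulting summands with the expansion of the right-hand side. The paper organizes the computation by grouping the summands into two blocks $A$ and $B$ and simplifying $A$ using $(T\otimes\id)\circ\Delta_S^2=\Delta_S^2$, which is exactly your step (i); the sign bookkeeping you flag as delicate is the only real content, and your account of it is accurate.
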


\begin{proof}

We start by computing the left hand side of \eqref{eq:leibnizcochain}. If $\alpha\in C(\mathfrak{g},\mathbb{V}_1)$ and $\beta\in C(\mathfrak{g},\mathbb{V}_2)$ are homogeneous, then using the definition of $D_{\rho_{i}}$ together with the interchange law \eqref{eq:interchangelaw}, one has
\begin{align*}
     D_{\rho_3}(\alpha\wedge_{m}\beta)=&\rho_3\circ (\id_{S(\mathfrak{g}[1])}\otimes \alpha\wedge_{m} \beta)\circ \Delta_S-(-1)^{|\alpha|+|\beta|}\alpha\wedge_{m}\beta\circ d\\
     =&\rho_3\circ (\id_{S(\mathfrak{g}[1])}\otimes(m\circ \alpha\otimes\beta \circ \Delta_S))\circ \Delta_S-(-1)^{|\alpha|+|\beta|}m\circ \alpha\otimes \beta \circ \Delta_S\circ d\\
     =&\rho_3\circ (\id_{S(\mathfrak{g}[1])}\otimes m)\circ (\id_{S(\mathfrak{g}[1])}\otimes\alpha\otimes\beta)\circ (\id_{S(\mathfrak{g}[1])}\otimes\Delta_S)\circ\Delta_S-\\
     &(-1)^{|\alpha|+|\beta|}m\circ \alpha\otimes\beta \circ (\id_{S(\mathfrak{g}[1])}\otimes d+d\otimes\id_{S(\mathfrak{g}[1])})\circ \Delta_S\\
     =&m\circ \rho_1\otimes \rho_2\circ (\id_{S(\mathfrak{g}[1])}\otimes \alpha\otimes \beta)\circ \Delta_S^2-(-1)^{|\alpha|+|\beta|}m\circ (\alpha\otimes(\beta\circ d))\circ \Delta_S+\\
     &-(-1)^{|\alpha|+|\beta|}m\circ (-1)^{|\beta|}((\alpha\circ d)\otimes \beta) \circ \Delta_S)\\
     =&A+ B\\
     \end{align*}
     
\noindent where 

\begin{align*}
A=&m\circ \big{(}(\rho_1\circ (\id_{S(\mathfrak{g}[1])}\otimes\alpha))\otimes \beta+(\id_{S(\mathfrak{g}[1])}\otimes \rho_2)\circ(T\otimes \id_{S(\mathfrak{g}[1])})\circ(\id_{S(\mathfrak{g}[1])}\otimes \alpha\otimes \beta)\big{)}\circ \Delta_S^2\\
 B=&m\circ( -(-1)^{|\alpha|+|\beta|}\alpha\otimes(\beta\circ d) -(-1)^{|\alpha|} (\alpha\circ d)\otimes \beta) \circ \Delta_S.\\
 \end{align*}

\noindent Since the coproduct is coassociative and cocommutative $\Delta_S^2=(\Delta_S\otimes\id_s)\circ \Delta_S=(\id_s\otimes\Delta_S)\circ \Delta_S$ and $(T\otimes\id_s)\circ\Delta_S^2=\Delta_S^2$, one concludes
     \begin{align*}
     A=&m\circ((\rho_1\circ(\id_s\otimes\alpha))\otimes \beta \circ (\Delta_S\otimes \id_s)\circ \Delta_S+(\id_s\otimes \rho_2)\circ (\alpha\otimes\id_s\otimes \beta)\circ (\id_s\otimes \Delta_S)\circ \Delta_S)\\
     =&m\circ(((\rho_1\circ(\id_s\otimes\alpha)\circ \Delta_S)\otimes \beta) \circ  \Delta_S+(-1)^{|\alpha|}(\alpha \otimes (\rho_2\circ (\id_s\otimes \beta)\circ \Delta_S))\circ \Delta_S)\\
     =&m\circ(((\rho_1\circ(\id_s\otimes\alpha)\circ \Delta_S)\otimes \beta) +(-1)^{|\alpha|}(\alpha \otimes (\rho_2\circ (\id_s\otimes \beta)\circ \Delta_S)))\circ \Delta_S.\\
 \end{align*}
 
 \noindent Now, one can easily see that

 \begin{align*}
      D_{\rho_3}(\alpha\wedge_{m}\beta)=&A+B\\
      =&D_{\rho_1} \alpha\wedge_{m}\beta+(-1)^{|\alpha|}\alpha\wedge_{m}D_{\rho_2}\beta,
 \end{align*}
as claimed.
\end{proof}

It follows from Proposition \ref{prop:leibnizcochain} that the product \eqref{eq:cochainmap} descends to a map in cohomology

\begin{align}\label{eq:cochainmapcohomology}
\wedge_m:H^{\bullet}(\mathfrak{g},\mathbb{V}_1)\otimes &H^{\bullet}(\mathfrak{g},\mathbb{V}_2)\to H^{\bullet}(\mathfrak{g},\mathbb{V}_3)\\
[\alpha]\otimes&[\beta]\mapsto [\alpha\wedge_m\beta].\nonumber
\end{align}

The following special instance of Proposition \ref{prop:leibnizcochain} will be relevant in section \ref{sec:CWL}. Let $\mathbb{V}$ be a graded algebra with product $m:\mathbb{V}\otimes \mathbb{V}\to \mathbb{V}$ and $\rho:S(\mathfrak{g}[1])\otimes \mathbb{V}\to \mathbb{V}$ is a representation up to homotopy. Using \eqref{eq:tensorruthhomogeneous} one has that $m:\mathbb{V}\otimes \mathbb{V}\to \mathbb{V}$ is a ruth map if and only if for every homogeneous elements $x\in S(\mathfrak{g}[1])$ and $v,w\in \mathbb{V}$ the following holds

\begin{equation}\label{eq:ruthderivations}
(\rho\otimes \rho)(x\otimes m(v\otimes w))=m(\rho(x\otimes v)\otimes w)+(-1)^{(|x|+1)|v|}m(v\otimes \rho(x\otimes w)).
\end{equation}

\noindent In other words, property \eqref{eq:ruthderivations} is equivalent to saying that $\mathfrak{g}$ acts on $(\mathbb{V},m)$ by graded derivations.

\begin{corollary}\label{cor:cochaincohomologygradedalgebra}
 
Let  $\mathfrak{g}$ be an $L_{\infty}$-algebra and $(\mathbb{V},m)$ a graded algebra. If $\rho:S(\mathfrak{g}[1])\otimes \mathbb{V}\to \mathbb{V}$ is an action by derivations, then $(H^{\bullet}(\mathfrak{g},\mathbb{V}),\wedge_m)$ is a graded algebra where $\wedge_m$ is defined by \eqref{eq:cochainmapcohomology} 

\begin{align}\label{eq:cochainmapgradedalgebra}
\wedge_m:H^{\bullet}(\mathfrak{g},\mathbb{V})\otimes &H^{\bullet}(\mathfrak{g},\mathbb{V})\to H^{\bullet}(\mathfrak{g},\mathbb{V})\\
[\alpha]\otimes&[\beta]\mapsto [\alpha\wedge_m\beta].\nonumber
\end{align}

\end{corollary}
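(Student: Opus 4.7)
The plan is to verify two claims: that $\wedge_m$ descends to a well-defined product on $H^\bullet(\mathfrak{g},\mathbb{V})$, and that this induced product is associative.

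For the first claim, I would invoke the observation, already noted in the paragraph preceding the corollary, that the derivation hypothesis \eqref{eq:ruthderivations} is exactly the condition that the multiplication $m:\mathbb{V}\otimes\mathbb{V}\to\mathbb{V}$ be a morphism of representations up to homotopy from the tensor product ruth $(\mathbb{V}\otimes\mathbb{V},\rho\otimes\rho)$ of Theorem \ref{thm:tensorproducts} to $(\mathbb{V},\rho)$. This is precisely the equivariance hypothesis \eqref{eq:equivariantcochainproduct} required by Proposition \ref{prop:leibnizcochain}, whose Leibniz identity
\begin{equation*}
D_\rho(\alpha\wedge_m\beta)=(D_\rho\alpha)\wedge_m\beta+(-1)^{|\alpha|}\alpha\wedge_m (D_\rho\beta)
\end{equation*}
ensures that products of cocycles are cocycles and that coboundaries form a two-sided ideal. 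Hence $\wedge_m$ induces the graded degree-zero map \eqref{eq:cochainmapgradedalgebra} on cohomology.

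For the second claim, I would prove the stronger statement that $\wedge_m$ is already associative at the cochain level. Unpacking the definition \eqref{eq:cochainproduct} and writing the iterated products in terms of the ternary coproduct, a direct computation using the interchange law \eqref{eq:interchangelaw} yields
\begin{align*}
(\alpha\wedge_m\beta)\wedge_m\gamma&=m\circ(m\otimes\id_{\mathbb{V}})\circ(\alpha\otimes\beta\otimes\gamma)\circ(\Delta_S\otimes\id)\circ\Delta_S,\\
\alpha\wedge_m(\beta\wedge_m\gamma)&=m\circ(\id_{\mathbb{V}}\otimes m)\circ(\alpha\otimes\beta\otimes\gamma)\circ(\id\otimes\Delta_S)\circ\Delta_S,
\end{align*}
for homogeneous $\alpha,\beta,\gamma\in C(\mathfrak{g},\mathbb{V})$. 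Coassociativity of $\Delta_S$ (the symmetric coalgebra is coassociative) together with associativity of $m$ (as $(\mathbb{V},m)$ is a graded algebra) identify the two expressions. Passing to cohomology yields the graded algebra structure on $H^\bullet(\mathfrak{g},\mathbb{V})$.

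The main obstacle is likely the bookkeeping of Koszul signs when rewriting $(\alpha\wedge_m\beta)\otimes\gamma$ in the factorized form above, since each application of the interchange law \eqref{eq:interchangelaw} introduces signs that must line up on both sides of the associativity identity. Once one confirms that these signs are compatible, the corollary follows formally from Proposition \ref{prop:leibnizcochain} together with the standard coalgebraic identities for $\Delta_S$.
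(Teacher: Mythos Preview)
Your proposal is correct and follows the paper's approach: the paper states this corollary without proof, treating it as an immediate consequence of Proposition \ref{prop:leibnizcochain} together with the observation (made just before the corollary) that the derivation condition \eqref{eq:ruthderivations} is precisely the equivariance \eqref{eq:equivariantcochainproduct}. Your associativity verification goes beyond what the paper records here, but it is exactly the computation the paper carries out later for the Jacobi identity in the proof of Proposition \ref{prop:convolutionDGLA}, so the method is the same.
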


Let us see some examples of induced cochain products.

\begin{example}\label{ex:Idcochain}
Let $(\mathbb{V},\rho)$ be a representation up to homotopy of $\mathfrak{g}$ on $\mathbb{V}$. The identity map $\id_{\mathbb{V}\otimes\mathbb{V}}:\mathbb{V}\otimes\mathbb{V}\to \mathbb{V}\otimes\mathbb{V}$ determines a cochain product map

$$C(\mathfrak{g},\mathbb{V})\otimes C(\mathfrak{g},\mathbb{V})\to C(\mathfrak{g},\mathbb{V}\otimes \mathbb{V}).$$
In this case, the cochain product of $\alpha,\beta\in C(\mathfrak{g},\mathbb{V})$ is denoted by $\alpha\wedge_{\id}\beta$. Note that this cochain product is neither symmetric nor skew-symmetric.

\end{example}

\begin{example}\label{ex:evaluationcochain}
Consider now a graded vector space $\mathbb{V}$. The evaluation map is given by 

\begin{equation}\label{eq:evaluationmap}
ev:\underline{\mathrm{End}}(\mathbb{V})\otimes \mathbb{V}\to \mathbb{V}; T\otimes v\mapsto T(v).
\end{equation}

\noindent Clearly \eqref{eq:evaluationmap} has degree zero, so it induces a product on cochains $\wedge_{ev}:C(\mathfrak{g},\underline{\mathrm{End}}(\mathbb{V}))\otimes C(\mathfrak{g},\mathbb{V})\to C(\mathfrak{g},\mathbb{V})$.

\end{example}

\begin{example}\label{ex:cochainbracket}
Let $(L,\left[\cdot,\cdot\right])$ be a graded Lie algebra. That is, $L=\bigoplus_{n\in \mathbb{Z}}L_n$ is a graded vector space and $\left[\cdot,\cdot\right]:L\otimes L\to L$ is a degree zero linear map which is skew-symmetric and satisfies the graded Jacobi identity:

    $$\left[x,\left[y,z\right]\right]=\left[y,\left[x,z\right]\right]+(-1)^{rs}\left[y,\left[x,z\right]\right].$$
    
\noindent If $\mathfrak{g}$ is an $L_{\infty}$-algebra, the bracket $\left[\cdot,\cdot\right]:L\otimes L\to L$  defines a cochain product $\wedge_{[\cdot,\cdot]}:C(\mathfrak{g},L)\otimes C(\mathfrak{g},L)\to C(\mathfrak{g},L)$ which is skew-symmetric.  
\end{example}

In particular, if $(L,[\cdot,\cdot], d)$ is a dgla (see Example \ref{ex:dglas}), that is, the linear map $d:L\to L$ has degree +1, it squares to zero $d^2=0$ and 

$$d(\left[x,y\right])=\left[dx,y\right]+(-1)^{|x|}\left[x,dy\right],$$
then the cochain product $\wedge_{[\cdot,\cdot]}:C(\mathfrak{g},L)\otimes C(\mathfrak{g},L)\to C(\mathfrak{g},L)$  is well-defined too. As shown in Corollary \ref{cor:cochaincohomologygradedalgebra}, if the space of coefficients of a ruth is a graded algebra, then such an additional structure descends to cohomology. The next result shows that if the space of coefficients is a dgla, then the corresponding space of \emph{cochains} inherits the structure of a dgla.

\begin{proposition}\label{prop:convolutionDGLA}
Let $(\mathfrak{g},\lambda)$ be an $L_{\infty}$-algebra with  induced degree 1 coderivation $d_{\lambda}:S(\mathfrak{g}[1])\to S(\mathfrak{g}[1])$ as in \eqref{eq:inducedcoderivation}. Assume that $(L, \left[\cdot,\cdot\right])$ is a graded Lie algebra, then $(C(\mathfrak{g},L),\wedge_{\left[\cdot,\cdot\right]})$ is a graded Lie algebra. Moreover, if $(L,\left[\cdot,\cdot\right],d)$ is a dgla, then $(C(\mathfrak{g},L),\wedge_{\left[\cdot,\cdot\right]},\textbf{d})$ is dgla with differential given by 
\[\textbf{d}(\alpha):=d\circ \alpha-(-1)^{|\alpha|}\alpha\circ d_{\lambda}\]
for every homogeneous element $\alpha\in C(\mathfrak{g},L)$.
\end{proposition}

\begin{proof}
It follows from Proposition \ref{prop:symmetrycochain} that the product $\wedge_{\left[\cdot,\cdot\right]}$ is skew-symmetric, we will see that it also satisfies the Jacobi identity. For that, let $\alpha,\beta,\gamma\in C(\mathfrak{g},L)$ be homogeneous elements, then:
\begin{align*}
    \alpha\wedge_{\left[\cdot,\cdot\right]}(\beta\wedge_{\left[\cdot,\cdot\right]} \gamma)=&\left[\cdot,\cdot\right]\circ(\alpha \otimes \beta\wedge_{\left[\cdot,\cdot\right]} \gamma)\circ \Delta_S=\left[\cdot,\cdot\right]\circ (\alpha \otimes (\left[\cdot,\cdot\right]\circ \beta \otimes \gamma\circ \Delta))\circ \Delta\\
    =&\left[\cdot,\cdot\right]\circ(\id\otimes \left[\cdot,\cdot\right])\circ\alpha\otimes\beta\otimes\gamma \circ(\id\otimes \Delta)\circ\Delta\\
    =&\left[\cdot,\cdot\right]\circ(\left[\cdot,\cdot\right]\otimes\id)\circ \alpha\otimes\beta\otimes\gamma \circ \Delta^2+\left[\cdot,\cdot\right]\circ(\id\otimes \left[\cdot,\cdot\right])\circ(T\otimes\id)\circ\alpha\otimes\beta\otimes\gamma \circ \Delta^2\\
    =&(\alpha\wedge_{\left[\cdot,\cdot\right]}\beta)\wedge_{\left[\cdot,\cdot\right]}\gamma+(-1)^{|\alpha||\beta|}\beta\wedge_{\left[\cdot,\cdot\right]}(\alpha\wedge_{\left[\cdot,\cdot\right]}\gamma).
\end{align*}

Assume now that $(L,\left[\cdot,\cdot\right],d)$ is dgla. We will show that the map $\textbf{d}$ is derivation of the product $\wedge_{\left[\cdot,\cdot\right]}$ with $\textbf{d}^2=0.$ The derivation property follows by a direct computation on homogeneous elements $\alpha,\beta\in C(\mathfrak{g},L)$. In fact,
\begin{align*}
    \textbf{d}(\alpha\wedge_{\left[\cdot,\cdot\right]}\beta)=&d\circ \alpha\wedge_{\left[\cdot,\cdot\right]}\beta-(-1)^{|\alpha|+|\beta|}\alpha\wedge_{\left[\cdot,\cdot\right]}\beta\circ d_{\lambda}\\
    =&d\circ\left[\cdot,\cdot\right]\circ\alpha\otimes\beta\circ\Delta-(-1)^{|\alpha|+|\beta|}\left[\cdot,\cdot\right]\circ\alpha\otimes\beta \circ\Delta\circ d_{\lambda}\\
    =&\left[\cdot,\cdot\right]\circ(d\otimes \id+\id\otimes d)\circ\alpha\otimes\beta\circ\Delta-(-1)^{|\alpha|+|\beta|}\left[\cdot,\cdot\right]\circ\alpha\otimes\beta \circ(d_{\lambda}\otimes\id+\id\otimes d_{\lambda})\circ\Delta\\
    =&\left[\cdot,\cdot\right]\circ (d\circ \alpha)\otimes\beta \circ\Delta +\left[\cdot,\cdot\right]\circ(-1)^{|\alpha|}\alpha\otimes(d\circ\beta)\circ\Delta-(-1)^{|\alpha|}\left[\cdot,\cdot\right]\circ(\alpha\circ d_{\lambda})\otimes\beta \circ\Delta\\
    &-(-1)^{|\alpha|+|\beta|}\left[\cdot,\cdot\right]\circ\alpha\otimes(\beta\circ d_{\lambda})\circ \Delta\\
    =&\left[\cdot,\cdot\right]\circ(d\circ\alpha-(-1)^{|\alpha|}\alpha\circ d_{\lambda})\otimes\beta \circ\Delta+(-1)^{|\alpha|}\left[\cdot,\cdot\right]\circ\alpha\otimes(d\circ\beta-(-1)^{|\beta|}\beta\circ d_{\lambda})\circ\Delta\\
    =&\textbf{d}\alpha\wedge_{\left[\cdot,\cdot\right]}\beta+(-1)^{|\alpha|}\alpha\wedge_{\left[\cdot,\cdot\right]}\textbf{d}\beta
\end{align*}

\noindent As $d^2=0$ and $d^2_{\lambda}=0$, it follows that $\textbf{d}^2=0$ as well.
\end{proof}

As for any dgla, elements of degree 1 has a well defined curvature.

\begin{definition}\label{def:curvaturedgla}

Let $\mathfrak{g}$ be an $L_{\infty}$-algebra and $(L,[\cdot,\cdot],d)$ a dgla. Let $C(\mathfrak{g},L)$ be the dgla of Proposition \ref{prop:convolutionDGLA}. If $\theta\in C^1(\mathfrak{g},L)$ is an element of degree 1, its \textbf{curvature} is the element $\Omega_{\theta}\in C^2(\mathfrak{g},L)$ defined by

$$\Omega_{\theta}=\textbf{d}\theta+ \frac{1}{2}\theta\wedge_{[\cdot,\cdot]}\theta.$$

\end{definition}

A \textbf{Maurer-Cartan} element in the dgla $C(\mathfrak{g},L)$ is a homogeneous element $\theta\in C^1(\mathfrak{g},L)$ whose curvature vanishes. That is, 

$$\textbf{d}\theta+ \frac{1}{2}\theta\wedge_{[\cdot,\cdot]}\theta=0.$$

The set of Maurer-Cartan elements in $C(\mathfrak{g},L)$ is denoted $MC(C(\mathfrak{g},L))$. 

\begin{example}\label{ex:dglaendomorphisms}

Every cochain complex $(\partial:\mathbb{V}\to \mathbb{V})$ has an associated dgla $\mathfrak{gl}(\mathbb{V}):=\underline{\mathrm{End}}(\mathbb{V})$  with bracket defined as

$$\left[ S,T\right]=S\circ T-(-1)^{|S||T|}T\circ S,$$

\noindent and differential $\delta:\mathfrak{gl}(\mathbb{V})\to \mathfrak{gl}(\mathbb{V})$ given by

$$\delta(T):=\left[\partial, T\right]=\partial\circ T-(-1)^{|T|}T\circ \partial.$$

\noindent In particular, if $(\mathfrak{g},\lambda)$ is an $L_{\infty}$-algebra, then $C(\mathfrak{g},\mathfrak{gl}(\mathbb{V}))$ is a dgla whose differential is given by

\begin{equation}\label{eq:differentialCgl}
\bf{d}\alpha=[\partial,\alpha]-(-1)^{|\alpha|}\alpha\circ d_{\lambda},
\end{equation}

\noindent for every homogeneous element $\alpha\in C(\mathfrak{g},\mathfrak{gl}(\mathbb{V}))$.

\end{example}

\begin{definition}\label{def:reducedcochain} 
Let $\mathfrak{g}$ be an $L_{\infty}$-algebra and $\mathbb{V}$ a graded vector space. A cochain $\alpha\in C(\mathfrak{g},\mathbb{V})$ is called \textbf{reduced} if $\alpha(1)=0$. The set of reduced cochains is denoted by $\overline{C}(\mathfrak{g},\mathbb{V})$.
\end{definition}

Recall that the symmetric coalgebra is coaugmented with $S(\mathfrak{g}[1])=\mathbb{R}\oplus \overline{S}(\mathfrak{g}[1])$, where

$$\overline{S}(\mathfrak{g}[1])=\oplus_{k\geq 1}S^k(\mathfrak{g}[1]).$$

The natural inclusion $\underline{\mathrm{Hom}}(\overline{S}(\mathfrak{g}[1]),\mathbb{V})\hookrightarrow C(\mathfrak{g},\mathbb{V})$ has image given by the space of reduced cochains. From now on we use the identification $\overline{C}(\mathfrak{g},\mathbb{V})\cong \underline{\mathrm{Hom}}(\overline{S}(\mathfrak{g}[1]),\mathbb{V})$.

Now, if $(\mathbb{V},\partial)$ is a cochain complex, then the dgla $\mathfrak{gl}(\mathbb{V})$ of Example \ref{ex:dglaendomorphisms} induces a dgla structure on $C(\mathfrak{g},\mathfrak{gl}(\mathbb{V}))$ as in Proposition \ref{prop:convolutionDGLA}. One observes that this dgla structure restricts to reduced cochains, yielding a dgla $\overline{C}(\mathfrak{g},\mathfrak{gl}(\mathbb{V}))$. It turns out that Maurer-Cartan elements in the dgla $\overline{C}(\mathfrak{g},\mathfrak{gl}(\mathbb{V}))$ are precisely representations up to homotopy of $\mathfrak{g}$ on $\mathbb{V}$. We proceed now to explain this relation and its consequences.

\subsubsection{Maurer-Cartan elements and ruths}

Let $\mathfrak{g}$ be an $L_{\infty}$-algebra. If $\mathbb{V}$ is a graded vector space, there is an isomorphism $\underline{\mathrm{Hom}}(S(\mathfrak{g}[1])\otimes \mathbb{V},\mathbb{V})\cong \underline{\mathrm{Hom}}(S(\mathfrak{g}[1]),\underline{\mathrm{End}}\mathbb{V})$ and hence a degree one map $\rho:S(\mathfrak{g}[1])\otimes \mathbb{V}\to \mathbb{V}$ is equivalent to having a degree one map $\partial:\mathbb{V}\to \mathbb{V}; v\mapsto \rho(1\otimes v)$,  together with a degree one map $\overline{\rho}:\overline{S}(\mathfrak{g}[1])\to \underline{\mathrm{End}}(\mathbb{V})$. We refer to $\partial\in \underline{\mathrm{End}}(\mathbb{V})_1$ and $\overline{\rho}\in \overline{C}^1(\mathfrak{g},\underline{\mathrm{End}}(\mathbb{V}))$ as the \textbf{components} of $\rho:S(\mathfrak{g}[1])\otimes \mathbb{V}\to \mathbb{V}$ with respect to the natural coagmentation of $S(\mathfrak{g}[1])$.

As shown in \cite{Reinhold}, $\rho:S(\mathfrak{g}[1])\otimes \mathbb{V}\to \mathbb{V}$ is a representation up to homotopy if and only if $\partial^2=0$ and 

\begin{equation}\label{eq:overlinerhoruth}
    \left[\partial,-\right]\circ \bar{\rho}+\bar{\rho}\circ d_{\lambda}+\frac{1}{2}\left[\cdot,\cdot\right]\circ \bar{\rho}\otimes\bar{\rho}\circ\bar{\Delta}_s=0.
\end{equation}

\noindent One immediately observes that \eqref{eq:overlinerhoruth} reads $\Omega_{\overline\rho}=0$ where $\Omega_{\overline\rho}$ is the curvature of $\overline\rho$ as defined in \ref{def:curvaturedgla}. In other words, ruths of $\mathfrak{g}$ on the cochain complex $(\partial:\mathbb{V}\to \mathbb{V})$ are just Maurer-Cartan elements in the dgla $\overline{C}(\mathfrak{g},\mathfrak{gl}(\mathbb{V}))$.

Let us consider now the categories $L_{\infty}$-Alg and DGLA, respectively. Given a cochain complex $(\mathbb{V},\partial)$, there is a well-defined map on objects

\begin{align}\label{eq:functorinftydgla}
L_{\infty}\text{-Alg}&\to \text{DGLA}\\
&\mathfrak{g}\mapsto \overline{C}(\mathfrak{g},\mathfrak{gl}(\mathbb{V})) \nonumber
\end{align}

\noindent We will show that this map is actually a functor. For that, we need some preliminary results about homotopies between $L_{\infty}$-morphisms.

\begin{remark}\label{rmk:quasiinverse}
A well-known result of Kontsevich \cite{Kontsevich} says that given an $L_{\infty}$-quasi-isomorphism $F:\mathfrak{g}\to \mathfrak{h}$ there exists an $L_{\infty}$-quasi-isomorphism $G:\mathfrak{h}\to \mathfrak{g}$ such that the $L_{\infty}$-morphisms $F\circ G:\mathfrak{h}\to \mathfrak{h}$ and $G\circ F:\mathfrak{g}\to \mathfrak{g}$ induce the identity map between the underlying cohomologies $H(\mathfrak{h},\partial_{\mathfrak{h}})$ and $H(\mathfrak{g},\partial_{\mathfrak{g}})$, respectively. We refer to $G:\mathfrak{h}\to \mathfrak{g}$ as an \textbf{$L_{\infty}$-quasi-inverse} of $F:\mathfrak{g}\to \mathfrak{h}$.
\end{remark}

We recall now the notion of homotopy between $L_{\infty}$-morphisms. Let $\mathfrak{g},\mathfrak{h}$ be $L_{\infty}$-algebras with underlying differential graded coalgebras $(S(\mathfrak{g}[1]), d_{\mathfrak{g}})$ and $(S(\mathfrak{h}[1]), d_{\mathfrak{h}})$, respectively. Since there is no risk of confusion, we denote both the coproducts in $S(\mathfrak{g}[1])$ and $S(\mathfrak{h}[1])$ simply by $\Delta_S$.

A \textbf{homotopy} between $L_{\infty}$-morphisms $F,G:S(\mathfrak{g}[1])\to S(\mathfrak{h}[1])$ is a degree -1 map $H:S(\mathfrak{g}[1])\to S(\mathfrak{h}[1])$ which satisfies

\begin{equation}\label{eq:Linftyhomotopy}
F-G=d_{\mathfrak{h}}\circ H+H\circ d_{\mathfrak{g}}.
\end{equation}

\noindent In this case we say that $F$ and $G$ are \textbf{homotopic} and we write $F\sim G$. 

Given $F:\mathfrak{g}\to \mathfrak{h}$ an $L_{\infty}$-quasi-isomorphism, there exists an $L_{\infty}$-quasi-isomorphism $G:\mathfrak{h}\to \mathfrak{g}$ such that $F\circ G\sim \id_{\mathfrak{h}}$ and $G\circ F\sim \id_{\mathfrak{g}}$. This was shown by Kajiura \cite{Kajiura} in the case of $A_{\infty}$-morphisms. However, as mentioned in \cite{Kajiura}, the very same proof and techniques apply to the setting of $L_{\infty}$-morphisms. Kajiura's proof relies on the minimal model of an $A_{\infty}$-algebra, whose analogue for an $L_{\infty}$-algebra is given by Kontsevich decomposition of an $L_{\infty}$-algebra into minimal and contractible components \cite{Kontsevich}.

\begin{remark}\label{rmk:homotopiesquasiinverse}
In what follows we use that any $L_{\infty}$-quasi-isomorphism $F:\mathfrak{g}\to \mathfrak{h}$ has a quasi-inverse $G:\mathfrak{h}\to \mathfrak{g}$ and there exist homotopies

\begin{equation}\label{eq:homotopiesquasiinverse}
G\circ F-\id_{\mathfrak{g}}=d_{\mathfrak{g}}\circ H+H\circ d_{\mathfrak{g}};\quad \text{and}\quad F\circ G-\id_{\mathfrak{h}}=d_{\mathfrak{h}}\circ H' +H' \circ d_{\mathfrak{h}},
\end{equation}

\end{remark}

We are ready to show that \eqref{eq:functorinftydgla} is actually a functor.

\begin{theorem}\label{thm:quasiandruth}
Let $\mathfrak{g}$ and $\mathfrak{h}$ be two $L_{\infty}$-algebras. An $L_{\infty}$-morphism $F:\mathfrak{g}\to \mathfrak{h}$ induces a DGLA-morphism 
\[F^*:\overline{C}(\mathfrak{h},\mathfrak{gl}(\mathbb{V}))\to \overline{C}(\mathfrak{g},\mathfrak{gl}(\mathbb{V})),\quad F^{*}\alpha:=\alpha\circ F.
\]
If $F$ is an $L_{\infty}$-quasi-isomorphism then $F^{*}$ is a quasi-isomorphism of dgla's.
\end{theorem}
\begin{proof}

Let $\alpha,\beta \in \overline{C}(\mathfrak{h},\mathfrak{gl}(\mathbb{V}))$ be two homogeneous elements. Then 
\begin{align*}
    F^*[\alpha,\beta]=F^*(\alpha\wedge_{\left[\cdot,\cdot\right]}\beta)=&\left[\cdot,\cdot\right]\circ\alpha\otimes\beta \circ \Delta_S\circ F\\
    =&\left[\cdot,\cdot\right]\circ\alpha\otimes\beta \circ F\otimes F\circ\Delta_S\\
    =&\left[\cdot,\cdot\right]\circ(\alpha\circ F)\otimes (\beta\circ F)\circ \Delta_S\\
    =&F^*\alpha\wedge_{\left[\cdot,\cdot\right]}F^*\beta\\
    =& [F^*\alpha,F^*\beta]
\end{align*}

\noindent showing that $F^*$ preserves graded Lie brackets. Regarding the compatibility with the differentials, it follows from \eqref{eq:differentialCgl}  that:

\begin{align*}
    F^*(\textbf{d}\alpha)=&(\left[\partial,\cdot\right]\circ\alpha-(-1)^{|\alpha|}\alpha\circ d_{\lambda^{\mathfrak{h}}})\circ F\\
     =&\left[\partial,\cdot\right]\circ (\alpha\circ F)-(-1)^{|\alpha|}(\alpha\circ F)\circ d_{\lambda^{\mathfrak{g}}}\\
       =&\left[\partial,\cdot\right]\circ F^*\alpha-(-1)^{|\alpha|} F^*\alpha \circ d_{\lambda^{\mathfrak{g}}}\\
       =&\textbf{d}(F^*\alpha).
\end{align*}

\noindent

Hence, $F^*$ is a morphism of dgla's. Assume now that $F$ is an $L_{\infty}$-quasi-isomorphism and consider an $L_{\infty}$-quasi-inverse $G: \mathfrak{h}\to \mathfrak{g}$ together with homotopies as in \eqref{eq:homotopiesquasiinverse}. For every homogeneous element $\alpha\in \overline{C}(\mathfrak{h},\mathfrak{gl}(\mathbb{V}))$ one has
\begin{align*}
    (F\circ G-\id_{\mathfrak{h}})^*(\alpha) =&\alpha\circ (F\circ G-\id_{\mathfrak{h}})\\
    =&\alpha\circ (d_{\mathfrak{h}}\circ H' +H' \circ d_{\mathfrak{h}})\\
    =&-(-1)^{|\alpha|}\left[\partial,\cdot\right]\circ \alpha \circ H'+\alpha\circ d_{\mathfrak{h}}\circ H'+(-1)^{|\alpha|}\left[\partial,\cdot\right]\circ \alpha\circ H'+\alpha\circ H'\circ d_{\mathfrak{h}}\\
    =&-(-1)^{|\alpha|}H'^*(\textbf{d}\alpha)+(-1)^{|\alpha|}(\left[\partial,\cdot\right]\circ H'^*\alpha-(-1)^{|\alpha|-1}H'^*\alpha \circ d_{\mathfrak{h}})\\
    =&(-1)^{|\alpha|}(\textbf{d}(H'^*\alpha)-H'^*(\textbf{d}\alpha)).
\end{align*}

\noindent As a consequence,  in cohomology the map $G^*:\overline{C}(\mathfrak{g},\mathfrak{gl}(\mathbb{V}))\to \overline{C}(\mathfrak{h},\mathfrak{gl}(\mathbb{V}))$ yields a left inverse of $F^*$. Similarly, $G^*$ is a right inverse of $F^*$ in cohomology. This shows that $F^*$ is an isomorphism in cohomology, hence a quasi-isomorphism of dgla's.
\end{proof}

\subsection{Invariance of the $L_{\infty}$-algebra cohomology}

In this subsection we show that the $L_{\infty}$-algebra cohomology with coefficients in a representation up to homotopy is invariant under quasi-isomorphisms of ruths. 

\subsubsection{Reduced complex}

Let $\mathfrak{g}$ be an $L_{\infty}$-algebra and $\mathbb{V}$ a graded vector space.  Let $\overline{C}(\mathfrak{g},\mathbb{V})$ denote the set of reduced cochains.

\begin{lemma}\label{lemma:Drhosplitting}

Let $\rho:S(\mathfrak{g}[1])\otimes \mathbb{V}\to \mathbb{V}$ be a degree one map with associated components $\partial:\mathbb{V}\to \mathbb{V}$ and $\overline{\rho}:\overline{S}(\mathfrak{g}[1])\to \underline{\mathrm{End}}(\mathbb{V})$. If $\alpha\in \overline{C}(\mathfrak{g},\mathbb{V})$ then $D_{\rho}\alpha\in \overline{C}(\mathfrak{g},\mathbb{V})$. Also, the codifferential $D_{\rho}$ decomposes as

$$D_{\rho}\alpha=\overline{\rho}\wedge_{ev}\alpha+\partial\circ\alpha-(-1)^{|\alpha|}\alpha\circ d_{\lambda}.$$
\end{lemma}

\begin{proof}
Note that
\begin{align*}
    D_{\rho}\alpha(1)=&\rho\circ (\id_{S(\mathfrak{g}[1])}\otimes \alpha)\circ \Delta_S(1)-(-1)^{|\alpha|}\alpha \circ d_{\lambda}(1)\\
    =&\rho\circ (\id_{S(\mathfrak{g}[1])}\otimes \alpha)(1\otimes 1)-0\\
    =&\rho(1\otimes \alpha(1))\\
    =&\partial(\alpha(1))\\
    =&0.
\end{align*}

Consider now an element $x\in \overline{S}(\mathfrak{g}[1])$, then
\begin{align*}
    D_{\rho}\alpha(x)=&\rho(\id_{S(\mathfrak{g}[1])}\otimes \alpha)(\overline{\Delta}(x)+1\otimes x+x\otimes 1)-(-1)^{|\alpha|}\alpha(d_{\lambda}(x))\\
    =&\rho(\id_{S(\mathfrak{g}[1])}\otimes \alpha)\overline{\Delta}(x)+\rho(\id_{S(\mathfrak{g}[1])}\otimes \alpha)(1\otimes x)+\rho(\id_{S(\mathfrak{g}[1])}\otimes \alpha)(x\otimes 1)-(-1)^{|\alpha|}\alpha (d_{\lambda}(x))\\
    =&\overline{\rho}(\id_{S(\mathfrak{g}[1])}\otimes\alpha)\overline{\Delta}(x)+\partial(\alpha(x))+(-1)^{|\alpha||x|}\overline{\rho}_{x}(\alpha(1))-(-1)^{|\alpha|}\alpha (d_{\lambda}(x))\\
\end{align*}
and  since $\overline{\rho}(1\otimes\alpha)\overline{\Delta}=\overline{\rho}\wedge_{ev}\alpha$, one concludes 
\[D_{\rho}\alpha=\overline{\rho}\wedge_{ev}\alpha+\partial\circ \alpha-(-1)^{|\alpha|}\alpha\circ d_{\lambda}.\]
\end{proof}

Due to Lemma \ref{lemma:Drhosplitting} the operator $D_{\rho}:C(\mathfrak{g},\mathbb{V})\to C(\mathfrak{g},\mathbb{V})$ restricts to an operator $D_{\overline\rho}:\overline{C}(\mathfrak{g},\mathbb{V})\to \overline{C}(\mathfrak{g},\mathbb{V})$ on reduced cochains and

\begin{equation}\label{eq:reducedDrho}
D_{\overline{\rho}}\alpha=\overline{\rho}\wedge_{ev}\alpha+\partial\circ \alpha-(-1)^{|\alpha|}\alpha\circ d_{\lambda}.
\end{equation}

If $\rho:S(\mathfrak{g}[1])\otimes \mathbb{V}\to \mathbb{V}$ is not a representation up to homotopy, then $\overline\rho\in C^1(\mathfrak{g},\underline{\mathrm{End}}(\mathbb{V}))$ is not a Maurer-Cartan element. However, one has the following result which is the analogue of the classical expression for the square of the covariant derivative of a linear connection.

\begin{proposition}\label{prop:DsquareMC}

Let $\alpha\in \overline{C}(\mathfrak{g},\mathbb{V})$ be a reduced cochain. Then the following holds

$$D_{\overline{\rho}}^2(\alpha)=\Omega_{\overline\rho}\wedge_{ev}(\alpha).$$
\end{proposition}

\begin{proof}
Let $\alpha\in \overline{C}(\mathfrak{g},\mathbb{V})$ be a homogeneous element. It follows from \eqref{eq:reducedDrho} that

\begin{align*}
D^2_{\overline{\rho}}\alpha=&\overline{\rho}\wedge_{ev}(D_{\overline{\rho}}\alpha)+\partial\circ(D_{\overline{\rho}}\alpha)-(-1)^{|\alpha|+1}D_{\overline{\rho}}\alpha \circ d_{\lambda}\\
=&\overline{\rho}\wedge_{ev}(\overline{\rho}\wedge_{ev}\alpha)+\overline{\rho}\wedge_{ev}(\partial\circ \alpha)+\overline{\rho}\wedge_{ev}(-(-1)^{|\alpha|}\alpha\circ d_{\lambda})+\partial\circ (\overline{\rho}\wedge_{ev}\alpha)+ \partial\circ\partial \circ \alpha+\\
&-(-1)^{|\alpha|}\partial\circ \alpha\circ d_{\lambda}-(-1)^{|\alpha|+1}\overline{\rho}\wedge_{ev}\alpha \circ d_{\lambda}-(-1)^{|\alpha
|+1}\partial\circ \alpha\circ d_{\lambda}-(-1)^{|\alpha|+1}((-1)^{|\alpha|+1}\alpha\circ d_{\lambda}\circ d_{\lambda})\\
=& A+B+C\\
\end{align*}

\noindent where $A=\overline{\rho}\wedge_{ev}(\overline{\rho}\wedge_{ev}\alpha)$, $B=\overline{\rho}\wedge_{ev}(\partial\circ \alpha)+\partial\circ(\overline{\rho}\wedge_{ev}\alpha)$ and $C=-(-1)^{|\alpha|}\overline{\rho}\wedge_{ev}(\alpha\circ d_{\lambda})+(-1)^{|\alpha|}(\overline{\rho}\wedge_{ev}\alpha)\circ d_{\lambda}$. Here we have used the identities $\partial^2=0$ and $d^2_{\lambda}=0$.

A simple computation yields:

\begin{align*}
 A=&\frac{1}{2}(\overline{\rho}\wedge_{\left[\cdot,\cdot\right]}\overline{\rho})\wedge_{ev}\alpha\\
    B=&\left(\left[\partial,\cdot\right]\circ\overline{\rho}\right)\wedge_{ev}\alpha\\
    C=& (\overline{\rho} \circ d_{\lambda})\wedge_{ev}\alpha
\end{align*}

\noindent which together imply

\begin{align*}
    D_{\overline{\rho}}^2\alpha=&(\frac{1}{2}\overline{\rho}\wedge_{\left[\cdot,\cdot\right]}\overline{\rho})\wedge_{ev}\alpha+\left(\left[\partial,\cdot\right]\circ \overline{\rho}\right)\wedge_{ev}\alpha+(\overline{\rho}\circ d)\wedge_{ev}\alpha\\
    =&\left(\frac{1}{2}\overline{\rho}\wedge_{\left[\cdot,\cdot\right]}\overline{\rho}+\left[\partial,\cdot\right]\circ \overline{\rho} +\overline{\rho}\circ d \right)\wedge_{ev}\alpha\\
    =& \Omega_{\overline{\rho}}\wedge_{ev}\alpha
\end{align*}

\noindent as desired.

\end{proof}

In particular, a representation up to homotopy of $\mathfrak{g}$ on a cochain complex $(\mathbb{V},\partial)$ yields a cochain complex $(\overline{C}(\mathfrak{g},\mathbb{V}),D_{\overline{\rho}})$ called the \textbf{reduced complex}.


\subsubsection{The Canonical Spectral Sequence}

Let $\mathfrak{g}$ be an $L_{\infty}$ algebra. The symmetric coalgebra $S(\mathfrak{g}[1])$ comes equipped with its primitive filtration

$$S_{\left[0\right]}(\mathfrak{g}\left[1\right])=0,\quad S_{\left[1\right]}(\mathfrak{g}\left[1\right])=\mathbb{R},\quad S_{\left[p\right]}(\mathfrak{g}\left[1\right])=\bigoplus_{n<p}S^n(\mathfrak{g}\left[1\right]),$$

\noindent for every $p>1$, which is clearly ascending and bounded below. Also, It is well-known that the primitive filtration is exhaustive and Hausdorff, that is

$$\bigcup_{n=0}^{\infty}S_{\left[n\right]}(\mathfrak{g}\left[1\right])=S(\mathfrak{g}\left[1\right]),\quad \bigcap_{n=0}^{\infty}S_{\left[n\right]}(\mathfrak{g}\left[1\right])=0,$$

\noindent respectively. It is easy to see that the degree one coderivation $d:S(\mathfrak{g}[1])\to S(\mathfrak{g}[1])$ induced by the $L_{\infty}$-structure preserves the filtration. Additionally, if $(\mathbb{V},\rho)\in \mathrm{Rep}_{\infty}(\mathfrak{g})$ is a representation up to homotopy, then the cochain complex $(C(\mathfrak{g},\mathbb{V}),D_{\rho})$ inherits a descending filtration defined by

$$ F^{p}C(\mathfrak{g},\mathbb{V}):=\left\lbrace \alpha:S(\mathfrak{g}\left[1\right])\to \mathbb{V}\,|\, \alpha|_{S_{\left[p\right]}(\mathfrak{g}\left[1\right])}=0\right\rbrace,$$

\noindent for every $p\geq 0$. As a consequence of Lemma \ref{lemma:Drhosplitting}, one immediately observes that

$$D_{\rho}(F^pC^{\bullet}(\mathfrak{g},\mathbb{V}))\subseteq F^pC^{\bullet+1}(\mathfrak{g},\mathbb{V}).$$

\noindent It is clear that  $F^0C(\mathfrak{g},\mathbb{V})=C(\mathfrak{g},\mathbb{V})$ is given by the whole complex while  $F^1C(\mathfrak{g},\mathbb{V})=\overline{C}(\mathfrak{g},\mathbb{V})$ coincides with the complex of reduced cochains. In general, one has

$$\cdots F^{p+1}C(\mathfrak{g},\mathbb{V})\subseteq F^pC(\mathfrak{g},\mathbb{V})\subseteq\cdots\subseteq \overline{C}(\mathfrak{g},\mathbb{V})\subseteq C(\mathfrak{g},\mathbb{V}),$$

\noindent so the filtration is exhaustive. Also, one can see that this filtration is complete since

 $$C(\mathfrak{g},\mathbb{V})/F^nC(\mathfrak{g},\mathbb{V})\simeq \Hom(S_{[n]}(\mathfrak{g}\left[1\right]),\mathbb{V}),$$
 
\noindent which implies

$$\lim_{\longleftarrow}\Hom(S_{[n]}(\mathfrak{g}\left[1\right]),\mathbb{V})\simeq\Hom(S(\mathfrak{g}\left[1\right]),\mathbb{V})=C(\mathfrak{g},\mathbb{V}).$$

As a consequence, we have the following result about the spectral sequence associated to this filtration.

\begin{proposition}

The spectral sequence associated to the filtration $F_{\bullet}C(\mathfrak{g},\mathbb{V})$ weakly converges and 

$$E^{p,q}_1(\mathfrak{g};\mathbb{V})\Rightarrow H^{p+q}(\mathfrak{g};\mathbb{V}),$$
for every $p,q>0$.
\end{proposition}
\begin{proof}

The descending filtration above is exhaustive and complete then the result follows by Theorem 10 Section 5.5 in \cite{Weibel}. 

\end{proof}

The spectral sequence associated to the filtration $F_{\bullet}C(\mathfrak{g},\mathbb{V})$ of the complex $C(\mathfrak{g},\mathbb{V})$ starts with

$$E^{p,q}_0=F^pC^{p+q}(\mathfrak{g},\mathbb{V})/F^{p+1}C^{p+q}(\mathfrak{g},\mathbb{V})\simeq \Hom^{p+q}(S^p(\mathfrak{g}\left[1\right]),\mathbb{V})$$

\noindent and $d_0:E^{p,q}_0\to E^{p,q+1}_0$ is given by $\bar{D_{\rho}}$, for $\alpha \in \Hom^{p+q}(S^p(\mathfrak{g}\left[1\right]),\mathbb{V})$ we have \begin{equation}\label{d_0spectralsequence}
    d_0\alpha=\partial\alpha-(-1)^{p+q}\alpha {d_1^1}^{\otimes},
\end{equation} where ${d_1^1}^{\otimes}$ is the coderivation induced by $\lambda_1$. Explicitly for $x_i\in \mathfrak{
g}\left[1\right], 1\leq i\leq p$ holds that 
\begin{align*}
    d_{0}\alpha(x_1\vee\cdots\vee x_p)=&\partial(\alpha(x_1\vee\cdots\vee x_p))-\sum_{\sigma \in Sh(1,p-1)}(-1)^{p+q}\epsilon(\sigma)\alpha(\lambda_1(x_{\sigma(1)})\vee x_{\sigma(2)}\cdots\vee x_{\sigma(p)}).
\end{align*}

We proceed now to show that the $L_{\infty}$-cohomology with coefficients in a ruth is invariant by ruth quasi-isomorphisms.

\begin{definition}\label{def:ruthquasiiso}
Let $(\mathbb{V},\rho)$ and $(\mathbb{V}',\rho')$ be representations up to homotopy of $\mathfrak{g}$ and $\mathfrak{g}'$, respectively. A morphism $(F,f):(\mathbb{V},\rho)\to (\mathbb{V}',\rho')$ of representations up to homotopy is called a \textbf{quasi-isomorphism} if $F:\mathfrak{g}\to \mathfrak{g}'$ is an $L_{\infty}$-quasi-isomorphism and $f:\mathbb{V}'\to \mathbb{V}$ is a quasi-isomorphism between the underlying cochain complexes. 
\end{definition}

Let $F,G:\mathfrak{g}\to \mathfrak{h}$ be $L_{\infty}$-morphisms. A coderivation \textbf{along F and G} is a homogeneous map $D:S(\mathfrak{g}[1])\to S(\mathfrak{h}[1])$ satisfying

\begin{equation}\label{eq:FGcoderivation}
\Delta_S\circ D=(F\otimes D+D\otimes G)\circ \Delta_S,
\end{equation}

\noindent where by abuse of notation we denote $\Delta_S$ the coproduct in $S(\mathfrak{g}[1])$ and $S(\mathfrak{h}[1])$. The \textbf{iterated coproduct} is the map $\Delta^n_S:S(\mathfrak{g}[1])\to S(\mathfrak{g}[1])^{\otimes n}$ given by $\Delta^n_S=(\Delta_S\otimes \id^{\otimes (n-1)})\circ \Delta_S$ for $n\geq 1$ and $\Delta^0_S:=\id$. A straightforward computation shows that if $D:S(\mathfrak{g}[1])\to S(\mathfrak{h}[1])$ is a coderivation along $F$ and $G$, then the following holds:

\begin{equation}\label{eq:iteratedcoderivation}
\Delta^n_S\circ D=\big{(}\sum_{i+j=n}F^{\otimes i}\otimes D \otimes G^{\otimes j}\big{)}\circ \Delta^n_S.
\end{equation}

Let $F:\mathfrak{g}\to \mathfrak{h}$ be an $L_{\infty}$-quasi-isomorphism with a quasi-inverse $G:\mathfrak{h}\to \mathfrak{g}$ \cite{Kontsevich}. As observed in Remark \ref{rmk:homotopiesquasiinverse} there are homotopies $H:S(\mathfrak{g}[1])\to S(\mathfrak{g}[1])$ and $H':S(\mathfrak{h}[1])\to S(\mathfrak{h}[1])$ satisfying equations \eqref{eq:homotopiesquasiinverse}, see \cite{Kajiura}. The construction of the previous homotopies shows that $H$ is actually a coderivation along $G\circ F$ and $\id_{\mathfrak{g}}$. In particular, it follows from \eqref{eq:iteratedcoderivation} that $H$ preserves the primitive filtration of $S(\mathfrak{g}[1])$. A similar result holds for $H'$.

Now we are ready to state the main result of this section.

\begin{theorem}\label{CEquasiiso}

Let $(\mathbb{V},\rho)$ and $(\mathbb{W},\tau)$ be representations up to homotopy of $\mathfrak{g}$ and $\mathfrak{h}$, respectively. If  $(F,f):(\mathbb{V},\rho)\to (\mathbb{W},\tau)$ is a ruth quasi-isomorphism, then

  \begin{center}
    \begin{tikzcd}
       F^*:C(\mathfrak{h},\mathbb{W}) \arrow[r]&C(\mathfrak{g},\mathbb{V})
    \end{tikzcd}
\end{center}
is a quasi-isomorphism.

\end{theorem}

\begin{proof}

Note that as $F$ is a coalgebra morphism it preserves the primitive filtration on $S(\mathfrak{g}\left[1\right])$ and $S(\mathfrak{h}\left[1\right])$ then $F^*$ preserves the filtration on the cochain complexes $C_{\rho'}(\mathfrak{h},\mathbb{W})$ and $C_{\rho}(\mathfrak{g},\mathbb{V})$. Thus, $F^*$ induces a morphism between spectral sequences. Now we shall see that \[F^*_1:E_{1}^{p,q}(\mathfrak{h};\mathbb{W})\to E_{1}^{p,q}(\mathfrak{g};\mathbb{V})\] is a isomorphism. For this we take a $L_{\infty}$-homotopy inverse $T$ for $F$, it exists by \cite[Thm.7.5]{Kajiura}. Note that by equation \eqref{eq:iteratedcoderivation} a coderivation preserves the primitive filtration, then in particular for a homotopy coderivation $H$ of $S(\mathfrak{h}\left[1\right])$ the equation  \[F\circ T-\id=Hd+dH\]
holds on $S^p(\mathfrak{h}\left[1\right])$ for all $p\geq 1$. On the other hand, one has that for $p,q >0$ the map $F_0^*:E_{0}^{p,q}(\mathfrak{h};\mathbb{W})\to E_{0}^{p,q}(\mathfrak{g};\mathbb{V})$ depends only on $F_1^1$ and $f$. Indeed, given $x_i\in \mathfrak{g}\left[1\right], 1\leq i \leq p$ and $\alpha\in E_{0}^{p,q}(\mathfrak{h};\mathbb{W})$, the following holds

$$F_0^*\alpha(x_1\vee \cdots\vee x_p)=f\alpha(F_1^1(x_1)\vee F_1^1(x_2)\vee\cdots\vee F_1^1(x_p)).$$

\noindent Hence, if $t$ is a homotopy inverse for $f$, then the map $(T,t):\mathfrak{h}\to \mathfrak{g}$ although it is not a ruths morphism, it yields a well-defined map 

$$T_0^*:E_{0}^{p,q}(\mathfrak{g};\mathbb{V})\to E_{0}^{p,q}(\mathfrak{h};\mathbb{W}),$$

\noindent for every $p,q >0$. Now it is straightforward verify that $T_0^*$ is a homotopy left inverse for $F_0^*,$
\begin{align*}
    T_0^*F_0^*\alpha=&(\id+h\partial+\partial h)\alpha(\id +H{d_1^1}^{\otimes}+{d_1^1}^{\otimes}H)\\
    =&\alpha+ d_0\left(h\alpha+h\alpha H{d_1^1}^{\otimes}+(-1)^{|\alpha|}(\alpha H+h\partial \alpha H)\right)
\end{align*}

\noindent In a similar way we can see that $T_0^*$ is a homotopy right inverse of $F_0^*$. Hence, $F_1^*$ is an isomorphism, and therefore by the Eilenberg-Moore Comparison Theorem \cite[Thm 5.5.11]{Weibel} we conclude that 

$$H(F^*):H(\mathfrak{h},\mathbb{W})\to H(\mathfrak{g},\mathbb{V})$$

\noindent is an isomorphism.
\end{proof}

The $L_{\infty}$-algebra cohomology is an invariant in the derived category of representations up to homotopy of $L_{\infty}$-algebras. This is the content of the following result, whose proof is straightforward from Theorem \ref{CEquasiiso}.

\begin{corollary}\label{cor:moritainvarianceLinftycohomology}

Let $\begin{tikzcd}\mathfrak{g} &\arrow[l,"F"']\tilde{\mathfrak{h}}\arrow[r,"G"]&\mathfrak{g}'\end{tikzcd}$ be quasi-isomorphisms of $L_{\infty}$-algebras. Let $(\mathbb{V},\rho)$ and $(\mathbb{V}',\rho')$ be ruths of $\mathfrak{g}$ and $\mathfrak{g}'$, respectively. If $g:\mathbb{V}'\to \mathbb{V}$ is a quasi-isomorphism of complexes such that $(G,g):(\mathbb{V},F^*\rho)\to (\mathbb{V}',\rho')$ is a quasi-isomorphism of ruths, then

$$H(\mathfrak{g},\mathbb{V})\cong H(\mathfrak{g}',\mathbb{V}').$$

\end{corollary}



\section{Extensions of $L_{\infty}$-algebras and the Chern-Weil-Lecomte map}\label{sec:CWL}

\subsection{Extensions of $L_{\infty}$-algebras}

We start by recalling the notion of extension in the category of $L_{\infty}$-algebras. For more details see  \cite{MehtaZambon}, \cite{ChuangLazarev}, and \cite{Reinhold}. 

\begin{definition}
An \textbf{extension of $L_{\infty}$-algebras} is a short exact sequence of $L_{\infty}$-algebras
\begin{center}
    \begin{tikzcd}\label{extensionLinfinity}
    0\arrow[r]&\mathfrak{n}\arrow[r,"\iota"]&\hat{\mathfrak{g}}\arrow[r,"\pi"]&\mathfrak{g}\arrow[r]&0
    \end{tikzcd}
\end{center}
in which $\pi$ and $\iota$ are strict $L_{\infty}$-morphisms. In this case we say that $\hat{\mathfrak{g}}$ is an extension of $\mathfrak{g}$ by $\mathfrak{n}$.
\end{definition}

As $\pi$ is a strict $L_{\infty}$-morphism it is determined by its projection $\pi_1^1$ onto $\mathfrak{g}[1]$. More precisely $\pi=S(\pi_1^1)$ where the right hand side of the equality is the symmetric extension of $\pi_1^1$ defined in \eqref{eq:symmetricextensionmap}. As a consequence, a linear section $h_1^1:\mathfrak{g}\left[1\right]\to \hat{\mathfrak{g}}\left[1\right]$ of $\pi_1^1$ induces a coalgebra morphism $h=S(h_1^1)$ which is clearly a linear section of $\pi$. 

In this section, $\mu:S(\mathfrak{g}[1])\times S(\mathfrak{g}[1])\to S(\mathfrak{g}[1])$ denotes the multiplication map in the symmetric algebra of $\mathfrak{g}[1]$ and $\lambda:S(\mathfrak{g}[1])\to \mathfrak{g}[1]$ the linear map induced by the symmetric brackets $\lambda_k:S^k(\mathfrak{g}[1])\to \mathfrak{g}[1]$ defining the $L_{\infty}$-structure of $\mathfrak{g}$, see \eqref{eq:symmetricJacobi}. Similarly, the corresponding maps for $\hat{\mathfrak{g}}$ are denoted by  $\hat{\mu}$, $\hat{\lambda}$ and $\hat{\lambda_k}$, respectively.

\begin{proposition}\label{prop:curvature} Let $h:\mathfrak{g}\to \hat{\mathfrak{g}}$ be a section of an extension of $L_{\infty}$-algebras \eqref{extensionLinfinity}.
For every $k\geq 1$ the linear map
 
$$K_{h}^k:=\hat{\lambda}_k\circ h^{\otimes k}-h\circ \lambda_k:\mathfrak{g}\left[1\right]^{\otimes k}\to \hat{\mathfrak{g}}\left[1\right]$$
has the following properties:
\begin{enumerate}
    \item $K_h^k$ is symmetric of degree 1, and 
    \item $\mathrm{Im}(K_h^k)\subseteq \mathfrak{n}\left[1\right]$.
\end{enumerate}

\end{proposition}
\begin{proof}
To prove \textit{(1)} let $x_i\in \mathfrak{g}, k \geq 1$ be homogeneous elements. Then 
\begin{align*}
    \hat{\lambda}_k\circ h^{\otimes k}(x_1\otimes \cdots\otimes x_k)=&\hat{\lambda}_k(h(x_1)\otimes\cdots\otimes h(x_k))\\
    =&\epsilon(\sigma;h(x_1),\dots,h(x_n))\hat{\lambda}_{k}(h(x_{\sigma(1)}),\otimes\cdots\otimes h(x_{\sigma(k)}))\\
    =&\epsilon(\sigma;x_1,\dots,x_n)(\hat{\lambda}_k\circ h^{\otimes k})(x_{\sigma(1)}\otimes \cdots\otimes x_{\sigma(k)}).
\end{align*}
The last identity follows from the fact that $h$ is a degree-preserving map, hence $\epsilon(\sigma;h(x_1),\dots,h(x_n))=\epsilon(\sigma;x_1,\dots,x_n)$. Then
\begin{align*}
    K_h^k(x_1\otimes \cdots\otimes x_n)=&\epsilon(\sigma;x_1,\dots,x_n)(\hat{\lambda}_k\circ h^{\otimes k})(x_{\sigma(1)}\otimes \cdots\otimes x_{\sigma(k)})\\
    &-h(\epsilon(\sigma;x_1,\cdots,x_n)\lambda_k(x_{\sigma(1)}\otimes \cdots\otimes x_{\sigma(k)}))\\
    =&\epsilon(\sigma;x_1,\cdots,x_n)(\hat{\lambda}_k\circ h^{\otimes k}-h\circ \lambda_k)(x_{\sigma(1)}\otimes\cdots\otimes x_{\sigma(k)})\\
    =&\epsilon(\sigma;x_1,\cdots,x_n)K_h^k(x_{\sigma(1)}\otimes\cdots\otimes x_{\sigma(k)}).
\end{align*}
which shows that $K_h^k$ is symmetric. As both $\lambda_k$ and $\hat{\lambda}_k$ have degree one and $h$ has degree zero, then $K_h^k$ has degree 1.\\
In order to show \textit{(2)}, note that since $\pi:\hat{\mathfrak{g}}\to \mathfrak{g}$ is a strict $L_{\infty}$-algebra morphism, then $\pi_1^1\circ \hat{\lambda}_k=\lambda_k\circ(\pi_1^1)^{\otimes k}$. Hence
\begin{align*}
    \pi_1^1(K_h^k)=&\pi_1^1\circ\hat{\lambda}_k\circ h^{\otimes k}-\pi_1^1\circ h\circ\lambda_k\\
    =&\lambda_k\circ (\pi_1^1)^{\otimes k}\circ h^{\otimes k}-\pi_1^1\circ h\circ\lambda_k\\
    =&\lambda_k\circ(\pi_1^1\circ h)^{\otimes k}-(\pi_1^1\circ h)\circ \lambda_k\\
    =&\lambda_k-\lambda_k\\
    =&0,
\end{align*}
where we have used the interchange law \eqref{eq:interchangelaw} and the fact that $\pi_1^1\circ h=\id$.
Then, by exactness of the sequence, one has 
$$\mathrm{Im}(K_h^k)\subseteq \text{Ker}(\pi_1^1)=\text{Im}(\iota)=\mathfrak{n}\left[1\right],$$
as desired.
\end{proof}

As a consequence of Proposition \ref{prop:curvature}, the family of maps $K_h^k:S^k(\mathfrak{g}\left[1 \right])\to \mathfrak{n}\left[1\right]; k\geq 1$ defines a degree one linear map

\begin{equation}\label{eq:curvature}
K_h:=\sum_{k\geq 1}K_h^k:S(\mathfrak{g}\left[1\right])\to \mathfrak{n}\left[1\right].
\end{equation}

\begin{definition}\label{def:curvature}

Let $h:\mathfrak{g}[1]\to \hat{\mathfrak{g}}[1]$ be a section of an extension of $L_{\infty}$-algebras as in \ref{extensionLinfinity}. The element $K_h\in C^1(\mathfrak{g},\mathfrak{n}[1])$ is called the \textbf{curvature} of $h$. If $K_{h}=0$, then section $h$ is called \textbf{flat}.

\end{definition}

\begin{remark}
The curvature $K_h\in C^1(\mathfrak{g},\mathfrak{n}[1])$ measures the failure of $h:\mathfrak{g}[1]\to \hat{\mathfrak{g}}[1]$ being an $L_{\infty}$-algebra morphism, since $K_h=\hat{\lambda}\circ S(h)-h\circ\lambda$. In other words, a section $h:\mathfrak{g}[1]\to \hat{\mathfrak{g}}[1]$ is flat if and only if it is an $L_{\infty}$-algebra morphism.
\end{remark}

Let $(\mathfrak{g}\left[1\right],\lambda)$ be an $L_{\infty}$-algebra. A graded subspace $\mathfrak{h}\left[1\right]\subseteq \mathfrak{g}[1]$ is an \textbf{ideal} if it is invariant under the adjoint representation of $\mathfrak{g}$, see Example \ref{ex:adjointruth}. That is,

$$\lambda(x\vee y) \in \mathfrak{h}\left[1\right],$$

\noindent for every $x\in \mathfrak{h}\left[1\right]$ and $y\in S(\mathfrak{g}\left[1\right])$.

 \begin{example}\label{ex:idealextension}
Given an extension of $L_{\infty}$-algebras as in \ref{extensionLinfinity}, the space $\text{Ker}(\pi_1^1)=\mathfrak{n}\left[1\right]$ is an ideal of $\hat{\mathfrak{g}}\left[1\right]$, since for $x\in \mathfrak{n}\left[1\right]$ and $y\in S(\hat{\mathfrak{g}}\left[1\right])$ we have 
 \begin{align*}
     \pi_1^1(\hat{\lambda}(x\vee y))=&\lambda(\pi_1^1(x)\vee \pi(y))\\
     =&\lambda(0\vee \pi(y))\\
     =&0.
 \end{align*}
 Hence, $\hat{\lambda}(x\vee y) \in \mathfrak{n}\left[1\right]$.
 \end{example}

It follows from example \ref{ex:idealextension} that the adjoint representation up to homotopy $\ad:S(\hat{\mathfrak{g}}[1])\otimes \hat{\mathfrak{g}}[1]\to \hat{\mathfrak{g}}[1]$ restricts to a ruth $\ad|_{\mathfrak{n}}:S(\hat{\mathfrak{g}}[1])\otimes \mathfrak{n}[1]\to \mathfrak{n}[1]$. If $h:\mathfrak{g}\to \hat{\mathfrak{g}}$ is a section of \eqref{extensionLinfinity} with zero curvature, then by applying the pullback functor $h^*:\mathrm{Rep}_{\infty}(\hat{\mathfrak{g}})\to \mathrm{Rep}_{\infty}(\mathfrak{g})$ defined in \eqref{eq:pullbackruth} one gets a ruth 

\begin{equation}\label{eq:pullbackadjoint}
\rho:=h^*(\ad|_{\mathfrak{n}}):S(\mathfrak{g}[1])\otimes \mathfrak{n}[1]\to \mathfrak{n}[1].
\end{equation}

\noindent In particular, the $L_{\infty}$-algebra cochain complex $(C(\mathfrak{g},\mathfrak{n}[1]),D_{\rho})$ is well defined and $D^2_{\rho}=0$, see \eqref{eq:CEdifferential}.

In general, given a section $h:\mathfrak{g}\to \hat{\mathfrak{g}}$ its curvature does not necessarily vanish, hence $h:\mathfrak{g}\to \hat{\mathfrak{g}}$ is not an $L_{\infty}$-morphism in general. In spite of this, one can still consider the map $\rho=h^*(\ad|_{\mathfrak{n}})$ as in \eqref{eq:pullbackadjoint} but it does not define a representation up to homotopy. In particular, the degree 1 map $D_{\rho}:C(\mathfrak{g},\mathfrak{n}[1])\to C(\mathfrak{g},\mathfrak{n}[1])$ does not square to zero. 

The maps $\rho: S(\mathfrak{g}[1])\otimes \mathfrak{n}[1]\to \mathfrak{n}[1]$ and $D_{\rho}:C(\mathfrak{g},\mathfrak{n}[1])\to C(\mathfrak{g},\mathfrak{n}[1])$ are given by

\begin{equation}\label{eq:rhoadjoint}
\rho=\hat{\lambda}\circ \hat{\mu}\circ (S(h)\otimes \id_{\mathfrak{n}[1]}); 
\end{equation}

and 

\begin{equation}\label{eq:Dadjoint}
D_{\rho}\alpha:=\rho\circ(\id_{S(\mathfrak{g}[1])}\otimes \alpha)\circ \Delta_{S}-(-1)^{p}\alpha\circ d_{\lambda},
\end{equation}
for every homogeneous cochain $\alpha\in C^p(\mathfrak{g},\mathfrak{n}[1])$.

We have the following version of the Bianchi identity.

\begin{theorem}\label{thm:bianchi}
Let $h:\mathfrak{g}\to \hat{\mathfrak{g}}$ be a section of an extension of $L_{\infty}$-algebras \eqref{extensionLinfinity} with curvature $K_h\in C^1(\mathfrak{g},\mathfrak{n}[1])$. Then, the following holds 
$$D_{\rho}K_h=0.$$
\end{theorem}

\begin{proof}
The proof is a long, but straightforward computation:
\begin{align*}
    D_{\rho}K_h=&\rho\circ (\id_{S(\mathfrak{g}[1])}\otimes K_h)\circ\Delta_S-(-1)^{1}K_h\circ d_{\lambda}\\
    =&\rho\circ(\id_{S(\mathfrak{g}[1])}\otimes (\hat{\lambda}\circ S(h))\circ\Delta_S-\rho\circ (\id_{S(\mathfrak{g}[1])}\otimes h\circ\lambda)\circ\Delta_S\\
    &+(\hat{\lambda}\circ S(h))\circ d_{\lambda}-(h\circ\lambda) \circ d_{\lambda}.\\
    \end{align*}

Since $\lambda\circ d_{\lambda}=0$, we conclude that $D_{\rho}K_h= A-B+C$, where

\begin{align*}
A=&\rho\circ(\id_{S(\mathfrak{g}[1])}\otimes (\hat{\lambda}\circ
    S(h)))\circ\Delta_S\\
 B=&\rho\circ (\id_{S(\mathfrak{g}[1])}\otimes (h\circ\lambda))\circ\Delta_S\\
 C=&\hat{\lambda}\circ S(h)\circ d_{\lambda}.
\end{align*}

\noindent We proceed now to show that $A=0$. Indeed, since $\rho:=\hat{\lambda}\circ\hat{\mu}\circ(S(h)\otimes \id_{\mathfrak{n}[1]})$ is defined by \eqref{eq:rhoadjoint}, one has

\begin{align*}
    A=&\hat{\lambda}\circ\hat{\mu}\circ(S(h)\otimes \id_{\mathfrak{n}[1]})\circ(\id_{S(\mathfrak{g}[1])}\otimes(\hat{\lambda}\circ S(h)))\circ\Delta_S\\
    =&\hat{\lambda}\circ\hat{\mu}\circ(S(h)\otimes(\hat{\lambda}\circ S(h))\circ \Delta_S\\
    =&\hat\lambda \circ\hat \mu \circ (\id_{S(\mathfrak{g}[1])}\otimes \hat\lambda)\circ (S(h)\otimes S(h))\circ \Delta_S\\
    =&\hat\lambda \circ\hat \mu \circ (\id_{S(\mathfrak{g}[1])}\otimes \hat\lambda)\circ (\Delta_S\otimes S(h))
    \end{align*}    

\noindent where we have used the interchange law \eqref{eq:interchangelaw} twice and the fact that $S(h)$ is a coalgebra morphism.  But $\hat\mu=\hat\mu\circ T$ and $\Delta_S=T\circ \Delta_S$, where $T$ is the twisting map \eqref{eq:twistingmap}, so replacing these expressions above and using $d_{\hat\lambda}=\hat\mu\circ (\hat\lambda\otimes \id_{S(\hat{\mathfrak{g}}[1])})\circ \Delta_S $ (see \eqref{eq:inducedcoderivation}), one concludes
    
  \begin{align*}  
   A=&\hat{\lambda}\circ\hat{\mu}\circ{ T\circ(\id_{S(\hat{\mathfrak{g}}[1])}\otimes \hat{ \lambda})\circ T}\circ\Delta_S\otimes S(h)\\
    =&\hat{\lambda}\circ\hat{\mu}\circ(\hat{\lambda}\otimes \id_{S(\hat{\mathfrak{g}}[1])})\circ\Delta_S\circ S(h)\\
    =& \hat{\lambda}\circ d_{\hat\lambda}\circ S(h)\\
    =&0,
\end{align*}

\noindent where we used $\hat{\lambda}\circ d_{\hat\lambda}=0$.

Regarding the term $B$, one can use the definition of $\rho$ as above, which yields
\begin{align*}
    B=&\hat{\lambda}\circ\hat{\mu}\circ(S(h)\otimes \id_{\mathfrak{n}[1]})\circ(\id_{S(\mathfrak{g}[1])}\otimes (h\circ \lambda))\circ\Delta_S\\
    =&\hat{\lambda}\circ\hat{\mu}\circ(S(h)\otimes(h\circ \lambda))\circ\Delta_S,
\end{align*}

\noindent where we have used the interchange law \eqref{eq:interchangelaw}.

Finally, we compute the term $C$. For that, using the definition of the coderivation $d_{\lambda}=\mu\circ (\lambda\otimes \id_{S(\mathfrak{g}[1])})\circ \Delta_S$ as in \eqref{eq:inducedcoderivation}, we have
\begin{align*}
    C=&\hat{\lambda}\circ S(h)\circ\mu\circ(\lambda\otimes \id_{S(\mathfrak{g}[1])})\circ\Delta_S\\
    =&\hat{\lambda}\circ \hat{\mu}\circ(S(h)\otimes S(h))\circ(\lambda \otimes \id_{S(\mathfrak{g}[1])})\circ\Delta_S\\
    =&\hat{\lambda}\circ \hat{\mu}\circ ((S(h)\circ \lambda)\otimes S(h))\circ \Delta_{S}
\end{align*}    
    
\noindent where the interchange law was used in the last identity. Again, because $\hat\mu=\hat\mu\circ T$ and $\Delta_S=T\circ \Delta_S$, where $T$ is the twisting map \eqref{eq:twistingmap}, one gets

 \begin{align*}   
    C=&\hat{\lambda}\circ \hat{\mu}\circ ((S(h)\circ \lambda)\otimes S(h))\circ \Delta_{S}\\
    =&\hat{\lambda}\circ \hat{\mu}\circ T\circ((h\circ\lambda)\otimes S(h))\circ T\circ\Delta_S\\
    =&\hat{\lambda}\circ \hat{\mu}\circ(S(h)\otimes(h\circ \lambda))\circ\Delta_S,
\end{align*}

\noindent showing that $B=C$. Hence,

\begin{align*}
    D_{\rho}K_h=& A-B+C\\
    =&0.
\end{align*}
\end{proof}
Note that if $h_0,h_1:\mathfrak{g}\to \hat{\mathfrak{g}}$ are sections of \eqref{extensionLinfinity}, then $h_1-h_0\in \underline{\Hom}(\mathfrak{g}[1],\mathfrak{n}[1])$. In other words, the space of sections of \eqref{extensionLinfinity} is an affine space. In particular, there is a one-parameter family of sections $h_t:\mathfrak{g}\to \hat{\mathfrak{g}}$ defined as

$$h_t:=h_0+t(h_1-h_0), t\in [0,1].$$

\noindent Such a family of sections induces maps $\rho_t:S(\mathfrak{g}[1])\otimes \mathfrak{n}[1]\to \mathfrak{n}[1]$ and $D_{\rho_t}:C(\mathfrak{g},\mathfrak{n}[1])\to C(\mathfrak{g},\mathfrak{n}[1])$ given by \eqref{eq:rhoadjoint} and \eqref{eq:Dadjoint}, respectively. Let $K_{h_t}\in C^1(\mathfrak{g},\mathfrak{n}[1])$ be the curvature of $h_t$. One has the cochain $\alpha\in C(\mathfrak{g},\mathfrak{n}[1])$ defined by

\begin{equation}\label{eq:curvaturevariationcochain}
\alpha:= (h_1-h_0)\circ \mathrm{pr}_{\mathfrak{g}[1]},
\end{equation}

\noindent where $\mathrm{pr}_{\mathfrak{g}[1]}:S(\mathfrak{g}[1])\to \mathfrak{g}[1]$ is the canonical projection. It turns out that $\alpha$ controls the variation of the one-parameter family of curvatures $K_{h_t}$.

\begin{proposition}\label{prop:curvaturevariation}

Let $h_t:\mathfrak{g}\to \mathfrak{\mathfrak{g}}$ be a one-parameter family of sections given by $h_t=h_0+t(h_1-h_0), t\in [0,1]$. Then

$$\frac{d}{dt}K_{h_t}=D_{\rho_t}\alpha,$$
with $\alpha\in C(\mathfrak{g},\mathfrak{n}[1])$ as in \eqref{eq:curvaturevariationcochain}.

\end{proposition}

\begin{proof}

For every $\epsilon\in \mathbb{R}$, consider the variation $h_{t+\epsilon}=h_t+\epsilon\alpha$ and its curvature $K_{h_{t+\epsilon}}$. Note that 

$$\left.\frac{d}{d\epsilon}\right|_{\epsilon=0}K_{h_{t+\epsilon}}=\frac{d}{dt}K_{h_t},$$
for every $t\in[0,1]$. We proceed now to compute the curvature $K_{h_{t+\epsilon}}$. If $c_{t,\epsilon}:=S(h_t+\epsilon\alpha)-S(h_t)-S(\epsilon\alpha)$, then:

\begin{align*}
    K_{h_{t+\epsilon}}=&\hat{\lambda}\circ S(h_{t+\epsilon})-h_{t+\epsilon}\circ \lambda\\
    =&\hat{\lambda}\circ S(h_{t}+\epsilon\alpha)-(h_t+\epsilon\alpha)\circ \lambda\\
    =&\hat{\lambda}\circ(S(h_t)+S(\epsilon\alpha)+c_{t,\epsilon})-h_t\circ \lambda -\epsilon\alpha\circ \lambda\\
    =& K_{h_t}+(\hat{\lambda}\circ S(\epsilon\alpha)-\epsilon\alpha\circ\lambda)+\hat{\lambda}\circ c_{t,\epsilon}.
\end{align*}

Since $\left.\frac{d}{d\epsilon}\right|_{\epsilon=0}K_{h_t}=0$, we conclude

$$\frac{d}{dt}K_{h_t}=\left.\frac{d}{d\epsilon}\right|_{\epsilon=0}K_{h_{t+\epsilon}}=A+B,$$

\noindent where $A=\left.\frac{d}{d\epsilon}\right|_{\epsilon=0}(\hat{\lambda}\circ S(\epsilon\alpha)-\epsilon\alpha\circ\lambda)$ and $B=\left.\frac{d}{d\epsilon}\right|_{\epsilon=0}\hat{\lambda}\circ c_{t,\epsilon}.$

Let us compute the term $A$ in the expression above.

\begin{align*}
    A=&\sum_{k}\left.\frac{d}{d\epsilon}\right|_{\epsilon=0}\hat{\lambda}_k\circ t_0^k S^k(\alpha)-\left.\frac{d}{d\epsilon}\right|_{\epsilon=0}\epsilon\alpha\circ \lambda\\
    =&\hat{\lambda}_1\circ S^1(\alpha)\circ \mathrm{pr}_{\mathfrak{g}[1]}-\alpha\circ \lambda,\\
    =&\hat{\lambda}_1\circ \alpha-\alpha\circ \lambda,
\end{align*}

Regarding the term $B$, one has
 
\begin{equation}\label{eq:eq1curvaturevariation}
    \left.\frac{d}{d\epsilon}\right|_{\epsilon=0}\hat{\lambda}\circ c_{t,\epsilon}=\sum_k\hat{\lambda}_k\left(\left.\frac{d}{d\epsilon}\right|_{\epsilon=0}c_{t,\epsilon}^k\right).
\end{equation}

On the one hand
\begin{align*}
    \left.\frac{d}{d\epsilon}\right|_{\epsilon=0}c_{t,\epsilon}^k=&\left.\frac{d}{d\epsilon}\right|_{\epsilon=0}(S^k(h_t+\epsilon \alpha)-S^k(h_t)-S^k(\epsilon\alpha))\\
    =&\left.\frac{d}{d\epsilon}\right|_{\epsilon=0}S^k(h_t+\epsilon\alpha)-\left.\frac{d}{d\epsilon}\right|_{\epsilon=0}S^k(\epsilon\alpha),
\end{align*}
where we have used $\left.\frac{d}{d\epsilon}\right|_{\epsilon=0}S^k(h_t)=0$. Note that 

$$\left.\frac{d}{d\epsilon}\right|_{\epsilon=0}S^k(\epsilon\alpha)=
\begin{cases}
S^1(\alpha); \quad k=1\\
0; \quad k\geq 2.
\end{cases}$$

On the other hand, for homogeneous elements $x_1,...,x_k \in \mathfrak{g}[1],$ the following holds:

\begin{align*}
    \left.\frac{d}{d\epsilon}\right|_{\epsilon=0}S^k&(h_t+\epsilon\alpha)(x_1\vee\cdots\vee x_k)=\left.\frac{d}{d\epsilon}\right|_{\epsilon=0}(h_t+\epsilon\alpha)x_1\vee\cdots\vee (h_t+\epsilon\alpha)x_k\\
    =&\sum_{i=1}^kh_t(x_1)\vee\cdots\vee h_t(x_{i-1})\vee \alpha(x_i)\vee h_t(x_{i+1})\vee\cdots\vee h_t(x_k).
    \end{align*}
In other words,
 $$\left.\frac{d}{d\epsilon}\right|_{\epsilon=0}S^k(h_t+\epsilon\alpha)=\sum_{i=1}^kh_t\vee\cdots\vee \alpha\vee\cdots\vee h_t,$$
and hence \eqref{eq:eq1curvaturevariation} reads
 
\begin{align*}
     \left.\frac{d}{d\epsilon}\right|_{\epsilon=0}\hat{\lambda}\circ c_{t,\epsilon}=&\sum_{k}\hat{\lambda}_k\left(\sum_{i=1}^kh_t\vee\cdots\vee \alpha\vee\cdots\vee h_t\right)-\hat{\lambda}_1\circ \alpha\circ \mathrm{pr}_{\mathfrak{g}[1]}.
\end{align*}
Therefore, 

\begin{align*}
     \left.\frac{d}{d\epsilon}\right|_{\epsilon=0}K_{h_t+\epsilon\alpha}=& A+B\\
     =&\hat{\lambda}_1\circ \alpha-\alpha\circ \lambda+\sum_{k}\hat{\lambda}_k\left(\sum_{i=1}^kh_t\vee\cdots\vee \alpha\vee\cdots\vee h_t\right)\\
     &-\hat{\lambda}_1\circ \alpha\circ \mathrm{pr}_{\mathfrak{g}[1]}\\
     =&\sum_{k}\left(\sum_{i=1}^k\hat{\lambda}_k (h_t\vee\cdots\vee \alpha\vee\cdots\vee h_t)\right)-\alpha\circ\lambda.
\end{align*}

Summing up, one has

\begin{equation}\label{eq:eq2curvaturevariation}
\left.\frac{d}{d\epsilon}\right|_{\epsilon=0}K_{h_t+\epsilon\alpha}=\sum_{k}\left(\sum_{i=1}^k\hat{\lambda}_k (h_t\vee\cdots\vee \alpha\vee\cdots\vee h_t)\right)-\alpha\circ\lambda.
\end{equation}

The first term of the left hand side of \eqref{eq:eq2curvaturevariation} can be written as

\begin{equation}\label{eq:eq3curvaturevariation}
\sum_{k}\left(\sum_{i=1}^k\hat{\lambda}_k (h_t\vee\cdots\vee \alpha\vee\cdots\vee h_t)\right)=\hat{\lambda}\circ\hat{\mu}\circ (S(h_t)\otimes \hat\alpha)\circ\Delta_S,
\end{equation}
where $\hat\alpha:=h_1-h_0$. Note that this immediately implies the statement of the proposition, namely:

$$\frac{d}{dt}K_{h_t}=D_{\rho_t}\alpha.$$
Indeed, the variation of the curvatures has the expression

\begin{align*}
    \frac{d}{dt}K_{h_t}=\left.\frac{d}{d\epsilon}\right|_{\epsilon=0}K_{h_t+\epsilon\alpha}=&\sum_{k}\left(\sum_{i=1}^k\hat{\lambda}_k (h_t\vee\cdots\vee \alpha\vee\cdots\vee h_t)\right)-\alpha\circ\lambda\\
    =&\rho_t\circ(\id_{S(\mathfrak{g}[1])}\otimes \alpha)\circ \Delta_S-(-1)^{|\alpha|}\alpha\circ d_{\lambda}\\
    =&D_{\rho_t}\alpha.
\end{align*}

\noindent It only remains to show \eqref{eq:eq3curvaturevariation}. To see this, recall that $\alpha=\hat\alpha\circ \mathrm{pr}_{\mathfrak{g}[1]}$, therefore on homogeneous elements $x_1,...,x_k\in S(\mathfrak{g}[1])$ the following holds

\begin{align*}
\hat{\lambda}\circ\hat{\mu}\circ(S(h_t)\otimes\hat\alpha)\Delta_S(x_1\vee\cdots \vee x_k)=&\hat{\lambda}\circ\hat{\mu}\circ(S(h_t))\left(\sum_{\sigma\in Sh(k-1,1)}\epsilon(\sigma)x_{\sigma(1)}\vee\cdots\vee x_{\sigma(k-1)}\otimes \alpha(x_{\sigma(k)})\right)\\
    =&\hat{\lambda}\circ \hat{\mu}\left(\sum_{\sigma\in Sh(k-1,1)}\epsilon(\sigma)h_t(x_{\sigma(1)})\vee\cdots\vee h_t(x_{\sigma(k-1)})\otimes \alpha(x_{\sigma(k)})\right)\\
    =&\hat{\lambda}\left(\sum_{\sigma \in Sh(k-1,1)}\epsilon(\sigma)h_t(x_{\sigma(1)})\vee\cdots\vee h_t(x_{\sigma(k-1)})\vee \alpha(x_{\sigma(k)})\right).
\end{align*}

\noindent
Note that since

$$Sh(k-1,1)=\left\lbrace \sigma \in \Sigma_k\,|\,\sigma(1)<\sigma(2)<\cdots<\sigma(k-1)\right\rbrace$$ has precisely $k$ elements, then
 
\begin{align*}
\hat{\lambda}\circ\hat{\mu}\circ(S(h_t)\otimes\hat\alpha)\Delta_S(x_1\vee\cdots \vee x_k)=&\sum_{k}\hat{\lambda}_k\left(\sum_{i=1}^k\epsilon(\sigma)h_t(x_{1})\vee\cdots\vee \alpha(x_i)\vee\cdots h_t(x_{k})\right)\\
     =&\sum_{k}\left(\sum_{i=1}^k\hat{\lambda}_k(h_t\vee\cdots\vee \alpha\vee\cdots\vee h_t)\right)(x_1\vee\cdots\vee x_k).
\end{align*}

\noindent which shows \eqref{eq:eq3curvaturevariation}.

\end{proof}


\subsection{Chern-Weil-Lecomte map}

Let us consider an extension of $L_{\infty}$-algebras

\begin{center}
    \begin{tikzcd}
    0\arrow[r]&\mathfrak{n}\arrow[r,"\iota"]&\hat{\mathfrak{g}}\arrow[r,"\pi"]&\mathfrak{g}\arrow[r]&0
    \end{tikzcd}
\end{center}
as in \eqref{extensionLinfinity}. Assume that $(\mathbb{V},\rho)$ is a representation up to homotopy of $\mathfrak{g}$. Since $\pi:\hat{\mathfrak{g}}\to \mathfrak{g}$ is an $L_{\infty}$-algebra morphism, one gets a pullback ruth $(\mathbb{V},\pi^*\rho)$ of $\hat{\mathfrak{g}}$ on $\mathbb{V}$. As we observed in Example \ref{ex:idealextension}, the subspace $\mathfrak{n}[1]\subseteq \hat{\mathfrak{g}}[1]$ is an ideal and hence the adjoint ruth of $\hat{\mathfrak{g}}$ restricts to a ruth $\ad|_{\mathfrak{n}}:S(\hat{\mathfrak{g}}[1])\otimes \mathfrak{n}[1]\to \mathfrak{n}[1]$, which extends to a ruth of $\hat{\mathfrak{g}}$ on $\wedge^k\mathfrak{n}[1]$ for every $k\geq 1$. This induced representation is denoted $(\wedge^k\mathfrak{n}[1],\wedge^k\ad|_{\mathfrak{n}})$ for every $k\geq 1$. In this way, we end up with two ruths

$$(\mathbb{V},\pi^*\rho), (\wedge^k\mathfrak{n}[1],\wedge^k\ad|_{\mathfrak{n}})\in \mathrm{Rep}_{\infty}(\hat{\mathfrak{g}}).$$

\noindent The corresponding space of ruths morphisms is denoted by

\begin{equation}\label{eq:equivariantpolynomials}
\underline{\Hom}^{\bullet}(\wedge^k\mathfrak{n}[1],\mathbb{V})^{\hat{\mathfrak{g}}}:=\{ f \in \underline{\Hom}^{\bullet}(\wedge^k\mathfrak{n}[1],\mathbb{V})\,|\, f \text{ ruth map}\}.
\end{equation}

One easily observes that if $f\in \underline{\Hom}^{\bullet}(\wedge^k\mathfrak{n}[1],\mathbb{V})^{\hat{\mathfrak{g}}}$ is a ruth map, then $f$ is $\mathfrak{g}$-equivariant in the sense that

\begin{equation}\label{eq:equivariantnotruth}
f\circ h^*(\ad^{\otimes k}|_{\mathfrak{n}})=\rho\circ (\id_{S(\mathfrak{g}[1])}\otimes f),
\end{equation}

\noindent where $h^*(\ad^{\otimes k}|_{\mathfrak{n}}):=(\ad^{\otimes k}|_{\mathfrak{n}})\circ (S(h)\otimes \id_{\mathfrak{n}[1]})$. It is worth noticing that, although $h:\mathfrak{g}\to \hat{\mathfrak{g}}$ is not an $L_{\infty}$-algebra morphism, the map $h^*(\ad^{\otimes k}|_{\mathfrak{n}})$ is well defined but it is not a representation of $\mathfrak{g}$. Even though, the map $h^*(\ad^{\otimes k}|_{\mathfrak{n}})$ induces a degree 1 derivation $D_{h^*(\ad^{\otimes k}|_{\mathfrak{n}})}:C(\mathfrak{g},\wedge^k\mathfrak{n}[1])\to C(\mathfrak{g},\wedge^k\mathfrak{n}[1])$ which does not square to zero. In spite of this, $f:\wedge^k\mathfrak{n}[1]\to \mathbb{V}$ yields a pullback map $f^*:C(\mathfrak{g},\mathbb{V})\to C(\mathfrak{g},\wedge^k\mathfrak{n}[1])$ and property \eqref{eq:equivariantnotruth} implies that the computations in Proposition \ref{equivariance} hold true, so 

\begin{equation}\label{eq:intertwiningnotruth}
D_{\rho}\circ f^*=f^*\circ D_{h^*(\ad^{\otimes k}|_{\mathfrak{n}})}.
\end{equation}

We will see that elements in $\underline{\Hom}^{\bullet}(\wedge^k\mathfrak{n}[1],\mathbb{V})^{\hat{\mathfrak{g}}}$ applied to the curvature of a section $h:\mathfrak{g}\to \hat{\mathfrak{g}}$ of an extension \eqref{extensionLinfinity} define $L_{\infty}$-cocycles in the cochain complex $(C(\mathfrak{g},\mathbb{V}),D_{\rho})$. For that, let $K_h\in C^1(\mathfrak{g},\mathfrak{n}[1])$ be the curvature of a section $h:\mathfrak{g}\to \hat{\mathfrak{g}}$ of \eqref{extensionLinfinity}. As seen in Example \ref{ex:Idcochain}, the identity map $\id:\mathfrak{n}[1]\otimes \mathfrak{n}[1]\to \mathfrak{n}[1]\otimes \mathfrak{n}[1]$ defines a cochain product $\wedge_{\id}:C(\mathfrak{g},\mathfrak{n}[1])\otimes C(\mathfrak{g},\mathfrak{n}[1])\to C(\mathfrak{g},\mathfrak{n}[1]\otimes \mathfrak{n}[1])$. In particular, when applied to the curvature one gets an element

$$K^{\wedge^k}_h:=K_h\wedge_{\id}\cdots \wedge_{\id} K_h\in C^k(\mathfrak{g},\mathfrak{n}[1]^{\otimes k}),$$
for every $k\geq 1$. Now, if $f\in \underline{\Hom}^{\bullet}(\wedge^k\mathfrak{n}[1],\mathbb{V})_{\bullet}^{\hat{\mathfrak{g}}}$, then

\begin{equation}\label{eq:CWLcocycle}
f_{K_h}:=f\circ K^{\wedge^k}_h\in   C^{k+\bullet}(\mathfrak{g},\mathbb{V}).
\end{equation}

Now we state the $L_{\infty}$-version of the Chern-Weil-Lecomte morphism, which is the main result of this section.

  \begin{theorem}\label{thm:lecomte}

  Let $K_h\in C^1(\mathfrak{g},\mathfrak{n}[1])$ be the curvature of a section $h:\mathfrak{g}\to \hat{\mathfrak{g}}$ of an extension of $L_{\infty}$-algebras

  \begin{center}
 \begin{tikzcd}
    0\arrow[r]&\mathfrak{n}\arrow[r,"\iota"]&\hat{\mathfrak{g}}\arrow[r,"\pi"]&\mathfrak{g}\arrow[r]&0,
 \end{tikzcd}
\end{center}

\noindent and $(\mathbb{V},\rho)\in \mathrm{Rep}_{\infty}(\mathfrak{g})$. If $f\in \underline{\Hom}^{\bullet}(\wedge^k\mathfrak{n}[1],\mathbb{V})^{\hat{\mathfrak{g}}}$, then

\begin{enumerate}

\item the element $f_{K_h}$ given by \eqref{eq:CWLcocycle} is a cocycle in the complex $(C(\mathfrak{g},\mathbb{V}),D_{\rho})$,

\item the cohomology class $[f_{K_h}]\in H^{\bullet+k}(\mathfrak{g},\mathbb{V})$ does not depend of the section $h$.

\end{enumerate}

  \end{theorem}

\begin{proof}

First we show that $f_{K_h}$ is a cocycle, that is, $D_{\rho}f_{K_h}=0$. To see this, since $f:\wedge^k\mathfrak{n}[1]\to \mathbb{V}$ is a ruth map, the induced pullback $f^*:C(\mathfrak{g},\mathbb{V})\to C(\mathfrak{g},\wedge^k\mathfrak{n}[1])$ commutes with the corresponding derivations, see \eqref{eq:intertwiningnotruth}. Also, by definition $f_{K_h}=f^*(\wedge^kK_h)$ then 

$$D_{\rho}f_{K_h}=f^*D_{h^*(\ad^{\otimes k}|_{\mathfrak{n}})}(\wedge^kK_h).$$

Using the Leibniz rule (see Prop. \ref{prop:leibnizcochain}) together with the Bianchi identity (see Theorem \ref{thm:bianchi}) yields $D_{h^*(\ad^{\otimes k}|_{\mathfrak{n}})}(\wedge^kK_h)=0$, which implies that $D_{\rho}f_{K_h}=0$.

Now we proceed to check \emph{(2)}. We will show that if $h_0,h_1:\mathfrak{g}\to \hat{\mathfrak{g}}$ are sections, then $f_{K_{h_1}}, f_{K_{h_0}}$ are cohomologous. For that, consider the family of sections $h_t=h_0+t(h_1-h_0)$ with $t\in [0,1]$ together with the corresponding cocycle $f_{K_{h_t}}\in C(\mathfrak{g},\mathbb{V})$ as in \eqref{eq:CWLcocycle}.  To see that $[f_{K_{h_1}}]= [f_{K_{h_0}}]$, it suffices to find $\beta_t\in C^{k-1+\bullet}(\mathfrak{g},\mathbb{V})$ satisfying

\begin{equation}\label{eq:eq1prooflecomte}
\frac{d}{dt}f_{K_{h_t}}=D_{\rho}\beta_t.
\end{equation}
\noindent Indeed, if such a $\beta_t$ exists, then 

\begin{align*}
f_{K_{h_1}}- f_{K_{h_0}}=&\int_{0}^{1}\frac{d}{dt}f_{K_{h_t}}dt\\
=& \int_{0}^{1}D_{\rho}\beta_tdt\\
=& D_{\rho}\big{(}\int_{0}^{1}\beta_tdt\big{)}
\end{align*}

We will show that \eqref{eq:eq1prooflecomte} holds. In fact, by the curvature variation formula of Proposition \ref{prop:curvaturevariation} and the Leibniz rule in Proposition \ref{prop:leibnizcochain}, one has

\begin{equation}\label{eq:eq2prooflecomte}
\frac{d}{dt}f_{K_{h_t}}=f^*(\frac{d}{dt}K^{\wedge^k}_{h_t})\\
=\sum f^*(K_{h_t}\wedge_{\id}\cdots \wedge_{\id} D_{\rho_t}\alpha\wedge_{\id}\cdots \wedge_{\id} K_{h_t}).
\end{equation}

A direct computation using the skew-symmetry of $f$ gives rise to 

$$f^*(K_{h_t}\wedge_{\id}\cdots \wedge_{\id} D_{\rho_t}\alpha\wedge_{\id}\cdots \wedge_{\id} K_{h_t})=f^*(D_{\rho_t}\alpha\wedge_{\id}K_{h_t}^{\wedge^{(k-1)}}).$$

\noindent As a consequence, identity \eqref{eq:eq2prooflecomte} reads

\begin{equation}\label{eq:eq3prooflecomte}
\frac{d}{dt}f_{K_{h_t}}=kf^*(D_{\rho_t}\alpha\wedge_{\id}K_{h_t}^{\wedge^{(k-1)}}).
\end{equation}

\noindent Define now $\beta_t:=kf^*(\alpha\wedge_{\id}K^{\wedge(k-1)}_{h_t})$. Then, using $D_{\rho}\circ f^*=f^*\circ D_{\rho_t}$ and the Leinbiz rule for $D_{\rho_t}$ as in Proposition \eqref{prop:leibnizcochain}, one gets

\begin{align*}
D_{\rho}\beta_t=&kf^*\big{(}(D_{\rho_t}\alpha)\wedge_{\id}K^{\wedge^{(k-1)}}_{h_t}+\alpha \wedge_{\id} D_{\rho_t}K^{\wedge^{(k-1)}}_{h_t}\big{)}\\
=&\frac{d}{dt}f_{K_{h_t}},
\end{align*}

\noindent where in the last identity we have used \eqref{eq:eq3prooflecomte} together with the Bianchi identity as in Theorem \ref{thm:bianchi}. This proves \eqref{eq:eq1prooflecomte} and hence statement \emph{(2)} as desired.
\end{proof}

As a consequence of Theorem \ref{thm:lecomte}, given an extension of $L_{\infty}$-algebras 

 \begin{center}
 \begin{tikzcd}
    0\arrow[r]&\mathfrak{n}\arrow[r,"\iota"]&\hat{\mathfrak{g}}\arrow[r,"\pi"]&\mathfrak{g}\arrow[r]&0,
 \end{tikzcd}
\end{center}

\noindent together with a representation up to homotopy $(\mathbb{V},\rho)\in \mathrm{Rep}_{\infty}(\mathfrak{g})$, there is a well defined map

\begin{align}\label{CWLmap}
cw:\underline{\Hom}^{\bullet}(\wedge^k\mathfrak{n}[1],&\mathbb{V})^{\hat{\mathfrak{g}}}  \to H^{\bullet+k}(\mathfrak{g},\mathbb{V})\\
& f\mapsto  [f_{K_h}], \nonumber
\end{align}
where $h:\mathfrak{g}\to \hat{\mathfrak{g}}$ is any section. The map \eqref{CWLmap} is referred to as the \textbf{Chern-Weil-Lecomte} map of the extension.













\subsection{Naturality}

In this section we show the naturality of the Chern-Weil-Lecomte morphism. More precisely, we will see that the pullback construction in the category of $L_{\infty}$-algebras induces a notion of pullback of extensions of $L_{\infty}$-algebras. If additionally, extensions come with representations up to homotopy, we study how the Chern-Weil-Lecomte morphism behaves with respect to these pullbacks.

As shown in \cite[Thm 5.9]{Rogers} (see also \cite[Thm 2.1]{Vallette}), given two $L_{\infty}$-algebra morphisms $T:\mathfrak{h}\to \mathfrak{g}$ and $T':\mathfrak{h}'\to \mathfrak{g}$ one has a pullback $L_{\infty}$-algebra $\mathfrak{h}\times_{\mathfrak{g}}\mathfrak{h}'$ which is an $L_{\infty}$-subalgebra of the direct sum $\mathfrak{h}\oplus \mathfrak{h}'$. We observe that this pullback construction extends to $L_{\infty}$-algebra extensions. Indeed, given $L_{\infty}$-algebras $\mathfrak{g}$ and $\mathfrak{n}$, extensions \eqref{extensionLinfinity} of $\mathfrak{g}$ by $\mathfrak{n}$  form a category $\mathrm{Ext}(\mathfrak{g},\mathfrak{n})$ in which morphisms are given by commutative diagrams of $L_{\infty}$-morphisms:

\begin{center}
 \begin{tikzcd}
    0\arrow[r]&\mathfrak{n}\arrow[d,"\id"]\arrow[r,]&\hat{\mathfrak{g}}\arrow[d,"\varphi"]\arrow[r,]&\mathfrak{g}\arrow[d,"\id"]\arrow[r]&0\\
    0\arrow[r]&\mathfrak{n}\arrow[r,]&\tilde{\mathfrak{g}}\arrow[r,]&\mathfrak{g}\arrow[r]&0.\\

 \end{tikzcd}
\end{center}

\noindent We simply say that $\varphi:\hat{\mathfrak{g}}\to \tilde{\mathfrak{g}}$ is a morphism of extensions $\hat{\mathfrak{g}},\tilde{\mathfrak{g}}\in \mathrm{Ext}(\mathfrak{g},\mathfrak{n})$. It is clear that every arrow in $\mathrm{Ext}(\mathfrak{g},\mathfrak{n})$ is necessarily invertible. 

Any $L_{\infty}$-algebra morphism $T:\mathfrak{h}\to \mathfrak{g}$ defines a pullback functor $T^*:\mathrm{Ext}(\mathfrak{g},\mathfrak{n})\to \mathrm{Ext}(\mathfrak{h},\mathfrak{n})$ defined on objects by $T^*\hat{\mathfrak{g}}:=\hat{\mathfrak{h}}$ where $\hat{\mathfrak{h}}:=\mathfrak{h}\times_{\mathfrak{g}}\hat{\mathfrak{g}}$ is the pullback defined by the maps $\pi:\hat{\mathfrak{g}}\to \mathfrak{g}$ and $T:\mathfrak{h}\to \mathfrak{g}$. The pullback extension is depicted as in the next commutative diagram of $L_{\infty}$-algebras:

\begin{center}\label{eq:pullbackextension}
 \begin{tikzcd}
    0\arrow[r]&\mathfrak{n}\arrow[d,"\id"]\arrow[r,"\iota_{\bullet}"]&\hat{\mathfrak{h}}\arrow[d,"T_{\bullet}"]\arrow[r,"\pi_{\bullet}"]&\mathfrak{h}\arrow[d,"\id"]\arrow[r]&0\\
    0\arrow[r]&\mathfrak{n}\arrow[r,"\iota"]&\hat{\mathfrak{g}}\arrow[r,"\pi"]&\mathfrak{g}\arrow[r]&0.\\

 \end{tikzcd}
\end{center}

As for any extension of $L_{\infty}$-algebras, the graded vector space $\mathfrak{n}$ in \eqref{eq:pullbackextension} is an ideal of both $\hat{\mathfrak{g}}$ and its pullback $\hat{\mathfrak{h}}$ via $T:\mathfrak{h}\to \mathfrak{g}$. In particular, one can see $\wedge^k\mathfrak{n}[1]; k\geq 1$ as a ruth in two different ways, namely $\wedge^k\mathfrak{n}[1]\in \mathrm{Rep}_{\infty}(\hat{\mathfrak{g}})$ and $\wedge^k\mathfrak{n}[1]\in \mathrm{Rep}_{\infty}(\hat{\mathfrak{h}})$ via the adjoint action. Additionally, if $(\mathbb{V},\rho)\in \mathrm{Rep}_{\infty}(\mathfrak{g})$ and $(\mathbb{W},\tau)\in \mathrm{Rep}_{\infty}(\mathfrak{h})$ are representations up to homotopy, then there are pullback ruths $(\mathbb{V},\pi^*\rho)\in \mathrm{Rep}_{\infty}(\hat{\mathfrak{g}})$ and $(\mathbb{W},\pi^*_{\bullet}\tau)\in \mathrm{Rep}_{\infty}(\hat{\mathfrak{h}})$. It is reasonable to require $T:\mathfrak{h}\to \mathfrak{g}$ to be part of a ruth morphism, so we assume now that $(T,t):(\mathbb{W},\tau) \to (\mathbb{V},\rho)$ is a ruths morphism, i.e. $T:\mathfrak{h}\to \mathfrak{g}$ is an $L_{\infty}$-algebra morphism and $t:\mathbb{V}\to \mathbb{W}$ is a map of graded vector spaces satisfying condition \eqref{eq:ruthmorphism}. One can easily see that there is a well defined map

\begin{equation}\label{eq:tmapnaturality}
t^{\sharp}:\underline{\Hom}^{\bullet}(\wedge^k\mathfrak{n}[1],\mathbb{V})^{\hat{\mathfrak{g}}} \to \underline{\Hom}^{\bullet}(\wedge^k\mathfrak{n}[1],\mathbb{W})^{\hat{\mathfrak{h}}}; f\mapsto t\circ f
\end{equation}

\noindent for every $k\geq 1$.

It is clear that every section $h:\mathfrak{g}\left[1\right]\to \hat{\mathfrak{g}}\left[1\right]$ of $\pi$ induces a section $\overline{h}:\mathfrak{h}[1]\to \hat{\mathfrak{h}}[1]$ of the pullback extension \eqref{eq:pullbackextension}, with the property: 

\begin{equation}\label{eq:pullbacksection}
T_{\bullet}\circ S(\overline{h})=S(h)\circ T.
\end{equation}

\noindent In particular, for every $k\geq 1$ there are Chern-Weil-Lecomte morphisms

$$cw:\underline{\Hom}^{\bullet}(\wedge^k\mathfrak{n}[1],\mathbb{V})^{\hat{\mathfrak{g}}}  \to H^{\bullet+k}(\mathfrak{g},\mathbb{V}) \quad \text{ and } \quad cw:\underline{\Hom}^{\bullet}(\wedge^k\mathfrak{n}[1],\mathbb{W})^{\hat{\mathfrak{h}}}  \to H^{\bullet+k}(\mathfrak{h},\mathbb{W}),$$

\noindent which are related according to the next result.

\begin{theorem}\label{thm:naturality}
Let $\hat{\mathfrak{g}}\in \mathrm{Ext}(\mathfrak{g},\mathfrak{n})$ be an extension of $\mathfrak{g}$ by $\mathfrak{n}$. Assume that $(\mathbb{V},\rho)\in \mathrm{Rep}_{\infty}(\mathfrak{g})$ are $(\mathbb{W},\tau)\in \mathrm{Rep}_{\infty}(\mathfrak{h})$ and $(T,t):(\mathbb{W},\tau) \to (\mathbb{V},\rho)$ is a ruths morphism. Let $\hat{\mathfrak{h}}\in \mathrm{Ext}(\mathfrak{h},\mathfrak{n})$ be the associated pullback extension \eqref{eq:pullbackextension}. Then the following diagram commutes

\begin{equation}\label{naturality1}
    \begin{tikzcd}
    \underline{\Hom}^{\bullet}(\wedge^k\mathfrak{n}\left[1\right],\mathbb{V})^{\hat{\mathfrak{g}}}\arrow[r,"cw"]\arrow[d,"t^{\sharp}"]&H^{k+\bullet}(\mathfrak{g},\mathbb{V})\arrow[d,"T^*"]\\
      \underline{ \Hom}^{\bullet}(\wedge^k\mathfrak{n}\left[1\right],\mathbb{W})^{\hat{\mathfrak{h}}}\arrow[r,"cw"] &H^{k+\bullet}(\mathfrak{h},\mathbb{W}).
    \end{tikzcd}
\end{equation}

\noindent where $t^{\sharp}$ is given by \eqref{eq:tmapnaturality} and $T^*:H^{k+ \bullet}(\mathfrak{g},\mathbb{V})\to H^{k+\bullet}(\mathfrak{h},\mathbb{W})$ denotes the pullback map in $L_{\infty}$-cohomology.

\end{theorem}

\begin{proof}

Let $h:\mathfrak{g}\left[1\right]\to \hat{\mathfrak{
g}}\left[1\right]$ be a linear section of $\pi:\hat{\mathfrak{g}}\to \mathfrak{g}$ with curvature $K_{h}\in C^1(\mathfrak{g},\mathfrak{n}[1])$. As observed above, there is an induced section $\overline{h}:\mathfrak{h}\left[1\right]\to \hat{\mathfrak{h}}\left[1 \right]$ of $\pi_{\bullet}:\hat{\mathfrak{h}}\to \mathfrak{h}$ satisfying \eqref{eq:pullbacksection}. Let $K_{\overline{h}}\in C^1(\mathfrak{h},\mathfrak{n}[1])$ be its curvature. A simple computation shows that $T^*K_h=K_{\overline{h}}$.

For $f\in \underline{\Hom}^{\bullet}(\wedge^k\mathfrak{n}[1],\mathbb{V})^{\hat{\mathfrak{g}}}$, let us consider the cocycle $f_{K_h}:=f\circ K^{\wedge^k}_h\in   C^{k+\bullet}(\mathfrak{g},\mathbb{V})$ as in \eqref{eq:CWLcocycle}. Then:

\begin{align*}
    T^{*}f_{K_h}=&f\circ K_{h}^{\wedge^k}\circ T\\
    =&f\circ K_{h}\otimes\cdots \otimes K_{h}\circ \Delta^{k-1}\circ T\\
    =&f\circ K_{h}\otimes\cdots \otimes K_{h}\circ T^{\otimes k}\circ \Delta^{k-1}\\
\end{align*}

\noindent where in the last identity we have used that $T$ is a coalgebra morphism. Due to the interchange law  \eqref{eq:interchangelaw}, one has

\begin{align*}
K_{h}\otimes\cdots \otimes K_{h}\circ T^{\otimes k}=&(K_{h}\circ T)^{\otimes k}\\
=& (T^*K_h)^{\otimes k}\\
=& K^{\otimes k}_{\overline{h}}.
\end{align*}    
    
Hence we conclude   
    
 \begin{align*}   
    T^{*}f_{K_h}=&f\circ K^{\otimes k}_{\overline{h}}\circ \Delta^{k-1}\\
     =&f\circ K_{\overline{h}}^{\wedge_{\otimes}k}\\
   =& f_{K_{\overline{h}}}.
\end{align*}

As a consequence
\begin{align*}
    \left(T^*\circ cw\right)(f)=&\left[T^*(f_{K_h})\right]\\
    =&\left[t\circ f_{K_h}\circ T\right]\\
    =&\left[t\circ T^*f_{K_h}\right]\\
    =&\left[t\circ f_{K_{\overline{h}}}\right]\\
    =&\left[t\circ f \circ K_{\overline{h}}^{\wedge_{\otimes}k}\right]\\
    =&\left[t^{\sharp}(f)\circ K_{\overline{h}}^{\wedge_{\otimes}k}\right]\\
    =&cw(t^{\sharp}(f))\\
    =&(cw\circ t^{\sharp})(f),\\
\end{align*}
which shows that $T^*\circ cw= cw\circ t^{\sharp}$, as desired.
\end{proof}

Suppose now that $T:\mathfrak{h}\to \mathfrak{g}$ is an $L_{\infty}$-morphism and $(\mathbb{V},\rho)\in \mathrm{Rep}_{\infty}(\mathfrak{g})$. The pullback ruth $(T^*\rho,\mathbb{V})\in \mathrm{Rep}_{\infty}(\mathfrak{h})$ defined in \eqref{eq:pullbackruth} comes with a canonical ruth morphism $(T,\id_{\mathbb{V}}):(\mathbb{V},T^*{\rho})\to (\mathbb{V},\rho)$. In particular, if $\hat{\mathfrak{g}}\in \mathrm{Ext}(\mathfrak{g,\mathfrak{n}})$ and $\hat{\mathfrak{h}}\in \mathrm{Ext}(\mathfrak{h,\mathfrak{n}})$ is the pullback extension, then Theorem \ref{thm:naturality} gives rise to the following.

\begin{corollary}\label{cor:naturality}

If $T:\mathfrak{h}\to \mathfrak{g}$ is an $L_{\infty}$-quasi isomorphism, then the isomorphism $T^*:H^{\bullet}(\mathfrak{g},\mathbb{V})\to H^{\bullet}(\mathfrak{h},T^*\mathbb{V})$ intertwines Chern-Weil-Lecomte maps:

\begin{equation}\label{naturality1}
    \begin{tikzcd}
    \underline{\Hom}^{\bullet}(\wedge^k\mathfrak{n}\left[1\right],\mathbb{V})^{\hat{\mathfrak{g}}}\arrow[r,"cw"]\arrow[d,"\id"]&H^{k+\bullet}(\mathfrak{g},\mathbb{V})\arrow[d,"T^*"]\\
       \underline{\Hom}^{\bullet}(\wedge^k\mathfrak{n}\left[1\right],\mathbb{W})^{\hat{\mathfrak{h}}}\arrow[r,"cw"] &H^{k+\bullet}(\mathfrak{h},\mathbb{W}).
    \end{tikzcd}
\end{equation}

\end{corollary}

In other words, quasi-isomorphic $L_{\infty}$-algebras have the same characteristic classes defined via the Chern-Weil-Lecomte construction.


\subsection{Examples}

In what follows we give several examples of characteristic classes which can be described by means of the Chern-Weil-Lecomte map associated to an extension of $L_{\infty}$-algebras.

\begin{example}[\textbf{Extensions of Lie algebras}]

Let us consider an extension of Lie algebras
 
$$ \begin{tikzcd}
    0\arrow[r]&\mathfrak{n}\arrow[r,"\iota"]&\hat{\mathfrak{g}}\arrow[r,"\pi"]&\mathfrak{g}\arrow[r]&0,
 \end{tikzcd}
 $$

\noindent together with a representation $\rho:\mathfrak{g}\to \mathfrak{gl}(V)$ where $V$ is a vector space. In this case, the Chern-Weil-Lecomte map defined by a splitting $h:\mathfrak{g}\to \hat{\mathfrak{g}}$ coincides with the ordinary Lecomte map \cite{Lecomte}. 
\end{example}

\begin{remark}\label{rmk:CWL2algebras}

In the special case of extensions of semistrict Lie $2$-algebras

 \begin{center}
 \begin{tikzcd}
    0\arrow[r]&\mathfrak{n}\arrow[r,"\iota"]&\hat{\mathfrak{g}}\arrow[r,"\pi"]&\mathfrak{g}\arrow[r]&0,
 \end{tikzcd}
\end{center}

\noindent with a splitting $h:\mathfrak{g}\to \hat{\mathfrak{g}}$, the curvature $K_h\in C^1(\mathfrak{g},\mathfrak{n}[1])$ has only four possibly non-zero components. These can be explicitly described in terms of higher skew-symmetric brackets and the underlying complexes $\partial:\mathfrak{h}\to \mathfrak{g}$ and $\hat{\partial}:\hat{\mathfrak{h}}\to \hat{\mathfrak{g}}$ corresponding to $\mathfrak{g}$ and $\hat{\mathfrak{g}}$, respectively. More concretely, one has

\begin{enumerate}

\item  $K_h:\mathfrak{g}_{-1}\times \mathfrak{g}_0\to \mathfrak{n}_{-1}$, with

$$K_h(u,x)= h_{-1}(D_xu)-D_{h_0(x)}h_{-1}(u).$$

\item  $K_h:\wedge^3\mathfrak{g}_0\to \mathfrak{n}_{-1}$, given by

$$K_h(x,y,z)= [h_0(x),h_0(y),h_0(z)]^3 - h_{-1}([x,y,z]^3).$$

\item  $K_h:\mathfrak{g}_{-1}\to \mathfrak{n}_{0}$, defined as

$$K_h(u)= (h_0\circ d-\hat{d}\circ h_{-1})(u).$$

\item  $K_h:\wedge^2\mathfrak{g}_{0}\to \mathfrak{n}_{0}$, where

$$K_h(x,y)= h_{0}[x,y]-[h_0(x),h_0(y)].$$

\end{enumerate}

Note also that if both $\mathfrak{g}$ and $\hat{\mathfrak{g}}$ are strict 2-term $L_{\infty}$-algebras, then the brackets of order 3 vanish and hence the curvature component $K_h:\wedge^3\mathfrak{g}_0\to \mathfrak{n}_{-1}$ above is necessarily zero as well.

\end{remark}

In what follows, we use Remark \ref{rmk:CWL2algebras} to give more examples of characteristic classes which can be described my means of Theorem \ref{thm:lecomte}.

\begin{example}[\textbf{Characteristic class of semistrict Lie 2-algebras}]

Since every semistrict 2-term $L_{\infty}$-algebra is quasi-isomorphic to a minimal one, it suffices to consider a semistrict 2-term $L_{\infty}$-algebra $\mathfrak{g}=\mathfrak{g}_{-1}\oplus \mathfrak{g}_0$ which is minimal. As discussed in Example \ref{ex:2termminimal}, there is an associated characteristic class $[\theta]\in H^3_{CE}(\mathfrak{g}_{0},\mathfrak{g}_{-1})$ where $\theta=[\cdot]^3$ and $\mathfrak{g}_0$ acts on $\mathfrak{g}_{-1}$ via the bracket of order 2. We will see now that $[\theta]$ can be described by means of the Chern-Weil-Lecomte approach. Let $\mathfrak{n}\subseteq \mathfrak{g}$ be the 2-term $L_{\infty}$-algebra with underlying complex $\mathfrak{g}_{-1}\to 0$. Similarly, the Lie algebra $\mathfrak{g}_0$ can be seen as an $L_{\infty}$-algebra with underlying complex $0\to \mathfrak{g}_0$. It turns out that $\mathfrak{g}$ is an extension of $\mathfrak{g}_0$ by $\mathfrak{n}$, which is given in terms of the underlying 2-term complexes, as follows:

\[
\begin{tikzcd}
  0 \arrow[r] & \mathfrak{g}_{-1} \arrow[d, "d=0"] \arrow[r, "\id"] & \mathfrak{g}_{-1} \arrow[d, "d=0"] \arrow[r, "0"] & 0 \arrow[d, "d=0"] \arrow[r] & 0 \\
  0 \arrow[r] & 0 \arrow[r, "0"] & \mathfrak{g}_0 \arrow[r, "\id"] & \mathfrak{g}_0 \ar[r] & 0.
\end{tikzcd}
\]

\noindent The representation $\mathfrak{g}_0\to \mathrm{End}(\mathfrak{g}_{-1})$ can be regarded as a ruth of $\mathfrak{g}_0$ on the cochain complex $0\to \mathfrak{g}_{-1}$. There is a canonical splitting given by $h_{-1}:=0$ and $h_{0}:=\id:\mathfrak{g}_0\to \mathfrak{g}_0$, which according to Remark \ref{rmk:CWL2algebras}, has curvature $K_h:S(\mathfrak{g}_0[1])\to \mathfrak{n}[1]$ with only one non-trivial component, namely

\begin{align*}
K^3_h:\wedge^3\mathfrak{g}_0&\to \mathfrak{g}_{-1}\\ 
(x,y,&z)\mapsto \theta(x,y,z).
\end{align*}

\noindent  It is clear now that $cw(\id)=[\theta]\in H^3_{CE}(\mathfrak{g}_{0},\mathfrak{g}_{-1})$ where $\id\in\underline{\mathrm{Hom}}^0(\mathfrak{n}[1],\mathfrak{n}[1])$.

\end{example}

\begin{example}[\textbf{The Atiyah Lie 2-algebra}]\label{ex:AtiyahLie2algebra}

Let $H\in \Omega^3_{cl}(M)$ be a closed 3-form on a manifold $M$. The \textbf{Atiyah Lie 2-algebra} is the minimal 2-term $L_{\infty}$-algebra $\mathfrak{a}(M,H)$ whose underlying complex is $0:C^{\infty}(M)\to \mathfrak{X}(M)$, the brackets of order 2 are given by $[\cdot,\cdot]:\mathfrak{X}(M)\times \mathfrak{X}(M)\to \mathfrak{X}(M)$ and $\mathfrak{X}(M)\times C^{\infty}(M)\to C^{\infty}(M)$ defined as the Lie derivative. The bracket of order 3 is 

$$\mathfrak{X}(M)^{\otimes 3}\to C^{\infty}(M); (X_1,X_2,X_3)\mapsto -H(X_1,X_2,X_3).$$

\noindent  From the previous example one concludes that every degree 3 cohomology class in $H^3_{dR}(M)$ can be described as the characteristic class of a minimal semi-strict $L_{\infty}$-algebra, and hence as an element in the image of its Chern-Weil-Lecomte map. The Atiyah Lie 2-algebra is related to symmetries of $\mathbb{S}^1$-bundle gerbes. Indeed, every $\mathbb{S}^1$-bundle gerbe with connective structure defines a quasi-isomorphism between the Lie 2-algebra of infinitesimal symmetries of the gerbe and $\mathfrak{a}(M,H)$, see \cite{DjounvounaKrepski}. We will discuss more about gerbes in Section \ref{sec:CWL2bundles}.

\end{example}

\begin{example}[\textbf{Abelian extensions}]\label{ex:abelianextensions}

An extension of $L_{\infty}$-algebras

$$
\begin{tikzcd}
    0\arrow[r]&\mathfrak{n}\arrow[r,"\iota"]&\hat{\mathfrak{g}}\arrow[r,"\pi"]&\mathfrak{g}\arrow[r]&0,
 \end{tikzcd}
$$

\noindent is called \textbf{abelian} if $\ad(x\otimes y)=0$ for every $x,y\in S(\mathfrak{n}[1])$. Here $\ad$ denotes the adjoint representation up to homotopy of $\mathfrak{n}$, see \ref{ex:adjointruth}. Let $h:\mathfrak{g}\to \hat{\mathfrak{g}}$ a splitting with curvature $K_h=\sum_{j}K^j_h:S(\mathfrak{g}[1])\to \mathfrak{n}[1]$ where $K^j_h:\mathfrak{g}[1]^{\otimes j}\to \hat{\mathfrak{n}}[1]$. There is a well-defined representation up to homotopy $\rho:S(\mathfrak{g}[1])\otimes \mathfrak{n}[1]\to \mathfrak{n}[1]$ given by $\rho(x\otimes y):=\ad(h(x)\otimes y)$. In particular, for $k=1$ in \eqref{CWLmap} there is a Chern-Weil-Lecomte map 

$$cw:\underline{\Hom}^{\bullet}(\mathfrak{n}[1],\mathfrak{n}[1])^{\hat{\mathfrak{g}}}  \to H^{\bullet+1}(\mathfrak{g},\mathfrak{n}[1]).$$

\noindent The identity $\id\in \underline{\Hom}^0(\mathfrak{n}[1],\mathfrak{n}[1])^{\hat{\mathfrak{g}}}$ yields the cohomology class $cw(\id)=[K_h]\in H^1(\mathfrak{g},\mathfrak{n}[1])$ of the curvature of $h$. In the special case of central extensions of \emph{Lie algebras}, the curvature $K_h=K^2_h:\wedge^2\mathfrak{g}\to \mathfrak{n}$ since the symmetric brackets $\lambda_j=0$ for every $j\neq 2$. Hence, the class $[K_h]\in H^2_{CE}(\mathfrak{g},\mathfrak{n})$ is the ordinary Chevalley-Eilenberg cohomology class classifying a central extension of Lie algebras. 

\end{example}

\begin{example}[\textbf{Kostant-Souriau central extension of a 2-plectic manifold}]\label{ex:CWL2plectic}

A \textbf{2-plectic structure} \cite{rogers1} on a manifold $M$ is a closed 3-form $\omega\in \Omega^3_{cl}(M)$ which is non-degenerate, i.e. $i_X\omega=0$ if and only if $X=0$. A 1-form $\alpha\in \Omega^1(M)$ is Hamiltonian if there exists a vector field $X_{\alpha}\in \mathfrak{X}(M)$ with $d\alpha=-i_{X_{\alpha}}\omega$. We denote by $\Omega^1_{\mathrm{Ham}}(M)$ and $\mathfrak{X}_{\mathrm{Ham}}(M)$ the sets of Hamiltonian 1-forms and Hamiltonian vector fields, respectively. The vector field $X_{\alpha}$ is unique and called the Hamiltonian vector field of $\alpha$. In particular, there is a well-defined map 

$$p:\Omega^1_{\mathrm{Ham}}(M)\to \mathfrak{X}_{\mathrm{Ham}}(M); \alpha \mapsto X_{\alpha}.$$

\noindent The bracket of $\alpha,\beta\in \Omega^1_{\mathrm{Ham}}(M)$ is defined as the 1-form $\{\alpha,\beta\}_{\omega}:=i_{X_{\beta}}i_{X_{\alpha}}\omega\in \Omega^1_{\mathrm{Ham}}(M)$. There is a semistrict Lie 2-algebra $L_{\infty}(M,\omega)$ whose underlying complex is given by the de Rham differential $d:C^{\infty}(M)\to \Omega^1_{\mathrm{Ham}}(M)$, the only non-zero skew-symmetric brackets are $\{\cdot,\cdot\}_{\omega}:\Omega^1_{\mathrm{Ham}}(M)\times \Omega^1_{\mathrm{Ham}}(M)\to \Omega^1_{\mathrm{Ham}}(M)$ and 

\begin{align*}
[\cdot]^3:\Omega^1_{\mathrm{Ham}}(M)\times \Omega^1_{\mathrm{Ham}}(M)\times \Omega^1_{\mathrm{Ham}}&(M)\to C^{\infty}(M)\\
(\alpha,\beta,\gamma)\mapsto & \qquad \omega(X_{\alpha},X_{\beta},X_{\gamma}).
\end{align*}

\noindent The Lie algebra $\mathfrak{X}_{\mathrm{Ham}}(M)$ is regarded as a Lie 2-algebra with underlying complex $0\to \mathfrak{X}_{\mathrm{Ham}}(M)$. As shown in \cite{rogers1}, one can see $L_{\infty}(M,\omega)$ as a central extension of $\mathfrak{X}_{\mathrm{Ham}}(M)$ by the abelian Lie 2-algebra $\mathfrak{n}$ whose underlying complex is $d:C^{\infty}(M)\to \Omega^1_{cl}(M)$. The extension is given by

\begin{equation}\label{eq:extension2plectic}
\begin{tikzcd}
  0 \arrow[r] & C^{\infty}(M) \arrow[d, "d"] \arrow[r, "\id"] & C^{\infty}(M) \arrow[d, "d"] \arrow[r, "0"] & 0 \arrow[d, "d=0"] \arrow[r] & 0 \\
  0 \arrow[r] & \Omega^1_{cl}(M) \arrow[r, "\iota"] & \Omega^{1}_{\mathrm{Ham}}(M) \arrow[r, "p"] & \mathfrak{X}_{\mathrm{Ham}}(M) \ar[r] & 0.
\end{tikzcd}
\end{equation}

\noindent A splitting $h_0:\mathfrak{X}_{\mathrm{Ham}}(M)\to \Omega^1_{\mathrm{Ham}}(M)$ induces a splitting $h$ of \eqref{eq:extension2plectic} with $h_{-1}:=0$. Hamiltonian vector fields act on $C^{\infty}(M)\to \Omega^1_{cl}(M)$ by the Lie derivative, hence as seen in Example \ref{ex:abelianextensions}, one has a Chern-Weil-Lecomte map 

$$cw:\underline{\Hom}^{\bullet}(\mathfrak{n}[1],\mathfrak{n}[1])^{\hat{\mathfrak{g}}}  \to H^{\bullet+1}(\mathfrak{X}_{\mathrm{Ham}}(M),\mathfrak{n}[1]),$$

\noindent with $cw(\id)=[K_h]$. There are only two non-vanishing components of the curvature $K_h$, namely the ordinary curvature of the splitting $h_0$

$$K_h:\wedge^2\mathfrak{X}_{\mathrm{Ham}}(M)\to \Omega^1_{\mathrm{Ham}}(M),$$

\noindent and $K^{-3}_h:\wedge^3\mathfrak{X}_{\mathrm{Ham}}(M)\to C^{\infty}(M)$ which is given by the 2-plectic structure

$$K^{-3}_h(X,Y,Z)=\omega(X,Y,Z).$$

\end{example}

\begin{example}[\textbf{Courant algebroids}]\label{ex:CWLsevera}

Let $E\to M$ be a Courant algebroid with pairing $\langle\cdot,\cdot\rangle:E\times E\to \R$, skew-symmetric bracket $\Cour{\cdot,\cdot}:\Gamma(E)\times \Gamma(E)\to \Gamma(E)$ and anchor map $\rho:E\to TM$, see \cite{LWX}. We say that $E$ is \textbf{exact} if the sequence of vector bundles

\begin{equation}\label{eq:exactCourant}
\begin{tikzcd}
    0\arrow[r]&T^*M\arrow[r,"\rho^*"]&E\arrow[r,"\rho"]&TM\arrow[r]&0,
 \end{tikzcd}
\end{equation}

\noindent is exact. It is well known that an exact Courant algebroid has an isotropic splitting $\nabla:TM\to E$, which induces a closed 3-form $H\in \Omega^3_{cl}(M)$ defined as 

$$H(X,Y,Z):=\langle\Cour{\nabla X,\nabla Y},\nabla Z\rangle,$$

\noindent for every $X,Y,Z\in \mathfrak{X}(M)$. The cohomology class $[H]\in H^3_{dR}(M)$ is called the \textbf{\v{S}evera class} of the exact Courant algebroid $E$. We will see that the \v{S}evera class can be described by means of an appropriate Chern-Weil-Lecomte map. Indeed, as shown in \cite{RoytenbergWeinstein}, every Courant algebroid has an associated semistrict 2-term $L_{\infty}$-algebra $\hat{\mathfrak{g}}$ with underlying complex 

$$\mathcal{D}:C^{\infty}(M)\to \Gamma(E); \langle \mathcal{D}f,e\rangle=\frac{1}{2}\Lie_{\rho(e)}f,$$

\noindent and higher skew-symmetric brackets 

$$[e_1,e_2]^2:=\Cour{e_1,e_2}; \quad [e,f]^2:=\langle e,\mathcal{D}e\rangle; \quad [e_1,e_2,e_3]^3:=-T(e_1,e_2,e_3),$$

\noindent where $T(e_1,e_2,e_3):=\frac{1}{3}\langle \Cour{e_1,e_2},e_3\rangle + c.p.$

\noindent Let $\mathfrak{g}:=\mathfrak{X}(M)$ be the Lie algebra of vector fields on $M$ viewed as a strict Lie 2-algebra with complex $0\to \mathfrak{X}(M)$.  Note that $\hat{\mathfrak{g}}$ is an extension of $\mathfrak{X}(M)$ by the \emph{abelian} Lie 2-algebra $\mathfrak{n}$ whose underlying complex is $\frac{1}{2}d:C^{\infty}(M)\to \Omega^1(M)$. In terms of the corresponding 2-term complexes, the extension is given by

\begin{equation}\label{eq:extensionCourant}
\begin{tikzcd}
  0 \arrow[r] & C^{\infty}(M) \arrow[d, "\frac{1}{2}d"] \arrow[r, "\id"] & C^{\infty}(M) \arrow[d, "\mathcal{D}"] \arrow[r, "0"] & 0 \arrow[d, "d=0"] \arrow[r] & 0 \\
  0 \arrow[r] & \Omega^1(M) \arrow[r, "\rho^*"] & \Gamma(E) \arrow[r, "\rho"] & \mathfrak{X}(M) \ar[r] & 0.
\end{tikzcd}
\end{equation}

\noindent  An isotropic splitting $\nabla:TM\to E$ of \eqref{eq:exactCourant} induces a splitting $h:\mathfrak{g}\to \hat{\mathfrak{g}}$ of \eqref{eq:extensionCourant} whose homogeneous components are $h_{-1}:=0$ and $h_0:=\nabla:\mathfrak{X}(M)\to \Gamma(E)$. In particular, there is an induced representation of vector fields $\mathfrak{X}(M)$ on $C^{\infty}(M)\to \Omega^{1}(M)$ given by the Lie derivative. As observed in Example \ref{ex:abelianextensions}, one has a Chern-Weil-Lecomte map

$$cw:\underline{\Hom}^{\bullet}(\mathfrak{n}[1],\mathfrak{n}[1])^{\hat{\mathfrak{g}}}  \to H^{\bullet+1}(\mathfrak{g},\mathfrak{n}[1]),$$

\noindent which maps $\id\in \underline{\Hom}^0(\mathfrak{n}[1],\mathfrak{n}[1])^{\hat{\mathfrak{g}}}$ to the cohomology class $cw(\id)=[K_h]\in H^1(\mathfrak{g},\mathfrak{n}[1])$ of the curvature of $h$. Due to Remark \ref{rmk:CWL2algebras}, the curvature $K_h\in C^1(\mathfrak{g},\mathfrak{n}[1])$ has only two non-zero components, namely the ordinary curvature of the isotropic splitting

$$K_h:\wedge^2\mathfrak{X}(M)\to \Omega^1(M); K_h(X,Y)=\nabla[X,Y]-\Cour{\nabla X, \nabla Y},$$

\noindent and the curvature component coming from the bracket of order 3

$$K_h:\wedge^3\mathfrak{X}(M)\to C^{\infty}(M); K_h(X,Y,Z)=[\nabla X, \nabla Y, \nabla Z]^3.$$

\noindent It follows from the definition of the bracket $[\cdot,\cdot]^3$, that

\begin{align*}
[\nabla X, \nabla Y, \nabla Z]^3=&-T(\nabla X, \nabla Y, \nabla Z)\\
=& -\frac{1}{3}\langle\Cour{\nabla X, \nabla Y}, \nabla Z \rangle + c.p.\\
=& -\frac{1}{3}H(X,Y,Z) + c.p.\\
=& -H(X,Y,Z).
\end{align*}

\noindent In particular, the \v{S}evera class $[H]\in H^3_{dR}(M)$ appears as one of the components of the cohomology class $[K_h]\in H^1(\mathfrak{g},\mathfrak{n}[1])$.
\end{example}

\begin{remark}[\textbf{2-plectic structures vs Courant algebroids}]

Examples \ref{ex:CWL2plectic} and \ref{ex:CWLsevera} are related by \v{S}evera's classification of exact Courant algebroids. Every 2-plectic structure yields an exact Courant algebroid (unique up to isomorphism) together with an isotropic splitting and whose \v{S}evera class is precisely the 2-plectic structure \cite{rogers1}. Hence, the Chern-Weil-Lecomte maps of Examples \ref{ex:CWL2plectic} and \ref{ex:CWLsevera} commute with the construction of  the exact Courant algebroid defined by a 2-plectic manifold.

\end{remark}


\section{Chern-Weil map for principal 2-bundles}\label{sec:CWL2bundles}

In this section we show how the Chern-Weil-Lecomte of an $L_{\infty}$-algebra extension can be applied to introduce a Chern-Weil map for principal 2-bundles over Lie groupoids.


\subsection{Principal 2-bundles over a Lie groupoid}

We start by introducing some terminology that shall be used throughout the whole section. For a Lie 2-group we use the notation $\mathbb{G}=(G_1\rightrightarrows G_0)$ and similarly a Lie groupoid will be denoted by $\mathbb{X}=(X_1\rightrightarrows X_0)$.

\begin{definition}
Let $\mathbb{G}$ be a Lie 2-group and $\mathbb{X}$ be a Lie groupoid. A \textbf{right action} of $\mathbb{G}$ on $\mathbb{X}$ is a Lie groupoid morphism $a:\mathbb{X}\times \GG\to \mathbb{X}$ which defines Lie group actions $a_1:X_1\times G_1\to X_1$ and $a_0:X_0\times G_0\to X_0$ on arrows and objects, respectively. Left actions are defined in a similar manner.
\end{definition}

Right and left actions are denoted $\mathbb{X}\curvearrowleft \GG$ and $\GG\curvearrowright \mathbb{X}$, respectively. Similarly,  $xg$ and $gx$ denote the corresponding actions of $g\in \GG$ on $x\in \mathbb{X}$. 

\begin{remark}\label{rmk:structuralequivariance}
Given a right action of $\mathbb{G}$ on $\mathbb{X}$,  the structural maps of $\mathbb{X}$ are equivariant with respect to the structural maps of $\mathbb{G}$. That is, if $g\in G_1, g_0\in G_0$ and $x\in X_1, x_0\in X_0$, then

$$s_X(xg)=s_X(x)s_G(g),\quad t_X(xg)=t_X(x)t_G(g),\quad 1_{x_0g_0}=1_{x_0}1_{g_0}$$

\noindent and for every composable arrows $(x,y)\in X_2$ and $(g,g') \in G_2$ the following interchange law is fulfilled

 \begin{equation}\label{eq:2interchangelaw}
     (x*y)(g*h)=(xg)*(yh)
 \end{equation} where $x*y$ and $g*h$ denote the groupoid multiplication in $\mathbb{X}$ and $\GG$, respectively. 
\end{remark}

\begin{remark}\label{rmk:2groupaut}

It is worth observing that a right action $\mathbb{X}\curvearrowleft \GG$ is not necessarily given by Lie groupoid automorphism. However, the Lie group $G_0$ acts on $\mathbb{X}$ by Lie groupoid automorphisms via $1_{g_0}:\mathbb{X}\to \mathbb{X}$ for every $g_0\in G_0$. Also, every arrow $g\in G_1$ defines a map $\tau_g:X_0\to X_1; x_0\mapsto 1_{x_0}g$ which is a natural isomorphism $\tau_g:1_{s(g)}\Rightarrow 1_{t(g)}$. Hence $\mathbb{X}\curvearrowleft \GG$ can be seen as a 2-group morphism $\GG\to 2\mathrm{Aut}(\mathbb{X})$ where $2\mathrm{Aut}(\mathbb{X})$ denotes the 2-group of automorphisms of $\mathbb{X}$.

\end{remark}

Let us see some examples of Lie 2-group actions.

\begin{example}
A usual action of a Lie group $G$ on a manifold $X$ can be seen as an action of the Lie 2-group $G\rightrightarrows G$ on the unit groupoid $X\rightrightarrows X$.
\end{example}

\begin{example}
Every Lie 2-group $\mathbb{G}$ acts on itself by right translations. More generally,  any Lie 2-subgroup $\mathbb{H}$ of $\mathbb{G}$ acts on $\mathbb{G}$ by right translations.
\end{example}

\begin{example}\label{ex:2adjoint}
Any Lie 2-group $\mathbb{G}$ acts on itself on the left by conjugation at both the level of arrows and the level of objects. This induces an action $\mathbb{G}\curvearrowright (\mathfrak{g}_1\rightrightarrows \mathfrak{g}_0)$ on its Lie 2-algebra, called the \textbf{adjoint representation} of $\GG$.
\end{example}

\begin{example}\label{ex:tangent-lifting-2-action}

A right action of $\mathbb{X}\curvearrowleft \GG$ induces a right action $T\mathbb{X}\curvearrowleft\mathbb{G}$ on \textbf{the tangent groupoid} $T\mathbb{X}=(TX_1\rightrightarrows TX_0)$ given by the tangent lift of the action of $G_i$ on $X_i$, for $i=0,1$. 

\end{example}

\begin{definition}
Let $\mathbb{X}=(X_1\rightrightarrows X_0)$ and $\mathbb{E}=(E_1\rightrightarrows E_0)$ be two Lie groupoids. A Lie groupoid morphism $\pi: \mathbb{E}\to \mathbb{X}$ is said to be  a \textbf{fibration} if it satisfies the following conditions

\begin{enumerate}
    \item the base map $\pi_0:E_0\to X_0$ is a surjective submersion, and 
    \item the map $\tilde{\pi}:E_1\to X_1\pullback{s}{\pi_0}E_0, e\mapsto (\pi_1(e),s_{E}(e))$ is a surjective submersion.
\end{enumerate}

\end{definition}

\begin{definition}\label{def:2bundle}

 Let $\pi:\p\to \mathbb{X}$ be a groupoid fibration together with a right action $\mathbb{P}\curvearrowleft \GG$. We say that the action is  \textbf{principal along $\pi$} if

\begin{equation}\label{eq:principality}
   \mathbb{P}\times \GG\to \mathbb{P}\times_{\mathbb{X}}\mathbb{P}; (p,g)\mapsto (p,pg)
   \end{equation}

\noindent is a Lie groupoid isomorphism. In this case, we say that $\mathbb{P}\to \mathbb{X}$ is a \textbf{principal $\GG$-2-bundle}. 
 \end{definition}

 Our main motivation to consider principal 2-bundles as in Definition \ref{def:2bundle} comes from the study of principal actions on differentiable stacks as in \cite{BNZ}, where principal actions of stacky Lie groupoids are studied. Since every Lie groupoid $\mathbb{X}=(X_1\to X_0)$ defines a quotient stack $[X_0/X_1]$ \cite{behrend-xu}, principal 2-bundles $\mathbb{P}\curvearrowleft \mathbb{G}\to \mathbb{X}$ in the sense of Definition \ref{def:2bundle} define principal actions of the stacky Lie group $[G_0/G_1]$ along the stack morphism $[P_0/P_1]\to [X_0/X_1]$ induced by $\pi:\p\to \mathbb{X}$. In \cite{BNZ}, a stacky Lie group is not necessarily strict and actions are weak. Hence, the notion of principal 2-bundle considered here can be seen as a strict version of a stacky principal bundle as in \cite{BNZ}. Several notions of principal 2-bundles have been studied by different authors, in \cite{Wockel} weak 2-groups act in a principal manner along a map $\mathcal{P}\to M$ between smooth 2-spaces. Here the base $M$ is a manifold seen as a smooth 2-space. Similarly, in \cite{waldorf1}, Waldorf studies principal 2-bundles as in Definition \ref{def:2bundle} with $\mathbb{X}$ being the unit Lie groupoid of a manifold, however principality is required in a weaker sense. Also, principal 2-bundles as in Definition \ref{def:2bundle} are studied in \cite{CCP}.


 \begin{example}[\textbf{Ordinary principal bundles}]\label{ex:2bundlebyusualbundle}
 Let $P\to X$ be a usual principal $G$-bundle over a manifold. Then, viewing manifolds as unit groupoids yields a principal 2-bundle $\mathbb{P}\curvearrowleft \GG\to \mathbb{X}$.
 \end{example}

 \begin{example}[\textbf{Principal 2-bundles over manifolds}]\label{2bundleovermanifold}

A \textbf{principal 2-bundle over a manifold} is a principal 2-bundle $\mathbb{P}\curvearrowleft \GG \to (X\rr X)$ where the base space is the unit groupoid of a manifold $X$. A more general notion of principal 2-bundle over a manifold was introduced in \cite{Wockel}, where principality is defined in a weak sense, i.e. the map \eqref{eq:principality} is a Morita map rather than an isomorphism.

  \end{example}
 
 \begin{example}[\textbf{Principal bundles over Lie groupoids}]\label{bundleovergroupoid}
 Let $\pi_0:P_0\to X_0$ be a principal $G$-bundle  with a left action of a Lie groupoid $\mathbb{X}=(X_1\rightrightarrows
  X_0)$ along $\pi_0$ which commutes with the right action of $G$. The transformation groupoid $\p:=X_1\ltimes P_0\rightrightarrows P_0$ is a principal $\GG$-2-bundle over $\mathbb{X}$ with $\GG=(G\rr G)$ defined as the unit groupoid of $G$. These kind of principal bundles are studied in \cite{LGTX}. 
 \end{example}

 \begin{example}[\textbf{Homogeneous Lie groupoids}]
 Let $\mathbb{G}$ be a Lie 2-group and $\mathbb{H}$ a Lie 2-subgroup such that $H_i\subseteq G_i$ is a closed subgroup for $i=0,1$. Then $\mathbb{G}$ is a principal $\mathbb{H}$-2-bundle over the Lie groupoid, $G_1/H_1\rightrightarrows G_0/H_0$.
 \end{example}

 \begin{example}[\textbf{Pullbacks}]
 
 Let $\phi:\mathbb{Y}\to \mathbb{X}$ be a Lie groupoid morphism. If $\mathbb{P}\curvearrowleft \GG\to \mathbb{X}$ is a principal 2-bundle, then the pullback bundles $\phi^*_1P_1\to Y_1$ and $\phi^*_0P_0\to X_0$ define a principal 2-bundle $\phi^*\mathbb{P}\curvearrowleft \GG\to \mathbb{Y}$ called the \textbf{pullback} 2-bundle of $\mathbb{P}$ by $\phi:\mathbb{Y}\to \mathbb{X}$.
 
\end{example}

\begin{example}[\textbf{$\mathbb{S}^1$-gerbes}]\label{ex:2bundlesgerbes}

Every surjective submersion $q:Y\to X$ defines a Lie groupoid $Y\times_{X}Y\to Y$ called the \textbf{submersion groupoid}. The nerve of the submersion groupoid yields a simplicial manifold $(Y_{n})_n$ whose face maps are denoted by $d_i:Y_n\to Y_{n-1}, i=0,1,...,n$. An \textbf{$\mathbb{S}^1$-gerbe} over a manifold $X$ is given by a triple $\mathcal{G}=(Y,P,\mu)$ where $q:Y\to X$ is a surjective submersion, $\pi:P\to Y\times_{X}Y$ is a principal $\mathbb{S}^1$-bundle and $\mu:d^*_0P\otimes d^*_2P\to d^*_1P$ is a bundle isomorphism over $Y_{3}$ called the \textbf{gerbe multiplication}, satisfying 

$$d^*_1\mu\circ(d^*_3\mu \otimes \id)=d^*_2\mu \circ (\id \otimes d^*_0\mu),$$

\noindent over $Y_{4}$. One can see that an $\mathbb{S}^1$-gerbe determines a principal 2-bundle with 2-group $\mathbb{S}^1\rr \{*\}$. Indeed, there is a natural Lie groupoid $P\rr Y$ with source and target maps $s:=d_0\circ \pi$, $t:=d_1\circ \pi$ and multiplication $m:P\times_YP\to P$ defined by 

$$\mu((x,y,z),p_1\otimes p_2):=((x,y,z),m(p_1,p_2)).$$

\noindent One can see that the action of $\mathbb{S}^1$ on $P$ defines an action of the Lie 2-group $\mathbb{S}^1\rr \{*\}$ on $P\rr Y$, making $\p:=(P\rr Y)$ a principal 2-bundle over the submersion groupoid $Y\times_{X}Y\to Y$. We refer to $\p\to Y\times_{X}Y$ as the \textbf{principal 2-bundle underlying} the $\mathbb{S}^1$-gerbe $\mathcal{G}$.

\end{example}

 
 \subsection{Equivariant VB-groupoids and LA-groupoids}

 Let $\mathbb{X}=(X_1\rr X_0)$ be a Lie groupoid. A \textbf{VB-groupoid} $\pi:\mathbb{E}\to \mathbb{X}$ is a Lie groupoid $\mathbb{E}=(E_1\rr E_0)$ where $E_1\to X_1$ and $E_0\to X_0$ are vector bundles and all the structural maps of $\mathbb{E}$ are vector bundle morphisms over the corresponding structural maps of $\mathbb{X}$. An \textbf{LA-groupoid} is a VB-groupoid $\pi:\mathbb{E}\to \mathbb{X}$ where $E_1\to X_1$ and $E_0\to X_0$ are Lie algebroids and all the structural maps are Lie algebroid morphisms. See \cite{mackenzie} for more details.
 
Let $\GG$ be a Lie 2-group. A VB-groupoid $\pi:\mathbb{E}\to \mathbb{X}$ is called \textbf{$\GG$-equivariant} if there is a left action $\GG\curvearrowright \mathbb{E}$ by vector bundle automorphisms. As a consequence, $E_1\to X_1$ and $E_0\to X_0$ are $G_1$-equivariant and $G_0$-equivariant, respectively. It follows from Remark \ref{rmk:2groupaut} that $G_0\curvearrowright \mathbb{E}$ is an action by VB-groupoid automorphisms. In a similar manner, one can introduce $\GG$-equivariant LA-groupoids and in this case $G_0\curvearrowright \mathbb{E}$ is an action by LA-groupoid automorphisms.

 \begin{example}\label{ex:equivarianttangentVB}
 
 Let $\GG \curvearrowright \mathbb{X}$ be a left action. Then the tangent LA-groupoid $T\mathbb{X}\to \mathbb{X}$ is $\GG$-equivariant with the action defined in Example \ref{ex:tangent-lifting-2-action}. 
 \end{example}
 
 \begin{example}\label{ex:preadjointVB}
 Let $\p\curvearrowleft \GG\to \mathbb{X}$ be a principal 2-bundle. Let $\mathbb{g}=(\mathfrak{g_1}\rr \mathfrak{g_0})$ be the Lie 2-algebra of $\GG$ equipped with the adjoint representation of Example \ref{ex:2adjoint}. The trivial VB-groupoid $\p\times \mathbb{g}\to \p$ is $\GG$-equivariant. Viewing $P_1\times \mathfrak{g}_1\to P_1$ and $P_0\times \mathfrak{g}_0\to P_0$ as bundles of Lie algebras, then $\p\times \mathbb{g}\to \p$ is actually a $\GG$-equivariant LA-groupoid.
 \end{example}

 Let $\pi:\mathbb{E}\to \mathbb{X}$ be a $\GG$-equivariant VB-groupoid. If $G_1\curvearrowright E_1$ and $G_0\curvearrowright E_0$ are principal actions, then $G_1\curvearrowright X_1$ and $G_0\curvearrowright X_0$ are principal as well and both $E_1/G_1\to X_1/G_1$ and $E_0/G_0\to X_0/G_0$ are vector bundles. It is easy to see that in this case, all groupoid structures also descend to the quotient, yielding a quotient VB-groupoid $\overline{\pi}:\mathbb{E}/\GG\to \mathbb{X}/\GG$. A similar result holds for $\GG$-equivariant LA-groupoids.
 
The following examples of quotients LA-groupoids associated to $\GG$-equivariant ones are particularly important in this work. See also \cite{CCP}, where both examples are discussed.
 
 \begin{example}\label{ex:adjointVB}
 
 Let us consider the $\GG$-equivariant LA-groupoid $\p\times \mathbb{g}\to \p$ of Example \ref{ex:preadjointVB}. The corresponding quotient LA-groupoid is denoted $\Ad(\p)\to \mathbb{X}$ and referred to as the \textbf{adjoint LA-groupoid} of $\p$.
 
 \end{example}
 
 \begin{example}\label{ex:atiyahVB}
 
 Let $\p\curvearrowleft \GG\to \mathbb{X}$ be a principal 2-bundle. Consider the tangent bundle $T\p\to \p$ equipped with the $\GG$-equivariant LA-groupoid structure of Example \ref{ex:equivarianttangentVB}. The corresponding quotient LA-groupoid is denoted $\At(\p)\to \mathbb{X}$ and called the \textbf{Atiyah LA-groupoid} of $\p$. 
 
 \end{example}

As shown in \cite{OrtizWaldron}, every VB-groupoid has an associated 2-vector space of sections, which in the case of an LA-groupoid it is also a Lie 2-algebra. We proceed now to recall the definition of the Lie 2-algebra of sections of an LA-groupoid following \cite{OrtizWaldron}.
 
 Let $\pi:\mathbb{E}\to \mathbb{X}$ be a VB-groupoid. A \textbf{multiplicative section} of $\mathbb{E}$ is a Lie groupoid morphism $e:\mathbb{X}\to \mathbb{E}$ satisfying $\pi\circ e=\id_{\mathbb{X}}$. The space of multiplicative sections is denoted by $\Gamma_{mult}(\mathbb{E})$. Multiplicative sections form a category in which morphisms are natural transformations $\tau:e\Rightarrow e'$ satisfying $1_{\pi} \bullet \tau=1_{\id_{\mathbb{X}}}$, where $\bullet$ denotes the horizontal composition of natural transformations:
 
\[\begin{tikzcd}
\mathbb{X} \arrow[rr, bend left=50,""{name=U, below}, "{e'}" description]
\arrow[rr, bend right=50,""{name=D}, "{e}" description]
&&
\mathbb{E}\arrow[Rightarrow, from=D,to=U,"\tau"]\arrow[rr, bend left=50,""{name=D, below}, "{\pi}" description]
\arrow[rr, bend right=50,""{name=U}, "{\pi}" description]&&\mathbb{X} \arrow[Rightarrow,"1_{\pi}", from=U, to=D]&=& \mathbb{X}\arrow[rr, bend left=50, "{\id_{\mathbb{X}}}" description,""{name=U, below}]
\arrow[rr, bend right=50, "{\id_{\mathbb{X}}}" description,""{name=D}]&&\mathbb{X}.\arrow[Rightarrow,"1_{\id_{\mathbb{X}}}", from=D, to=U]
\end{tikzcd}
\]
%

The \textbf{core bundle} of $\mathbb{E}$ is the vector bundle over $X_0$ defined as the pullback $C:=\ker(s:E_1\to X_1)|_{X_0}$. The \textbf{core complex} is the 2-term complex of vector bundles $\partial:C\to E_0$ with $\partial(c)=t(c)$ where $t:\mathbb{E}\to \mathbb{X}$ is the target map. 

The \textbf{complex of multiplicative sections} is the 2-term complex $C^{\bullet}_{mult}(\mathbb{E})$ defined by

\begin{equation}\label{eq:complexmultiplicative}
\delta:\Gamma(C)\to \Gamma_{mult}(\mathbb{E}); c\mapsto c^r-c^l,
\end{equation}

\noindent where $c^r(x):=c_{t(x)}\cdot 0_x$ and $c^l(x):=-0_x\cdot (c_{s(x)})^{-1}$ for every $x\in X_1$. Here $0:\mathbb{X}\to \mathbb{E}$ denotes the zero section and $\cdot$ the groupoid multiplication in $\mathbb{E}$. As shown in \cite{OrtizWaldron}, the category of multiplicative sections of $\mathbb{E}$ is isomorphic to the 2-vector space $\Gamma(C)\oplus \Gamma_{mult}(\mathbb{E})\rr \Gamma_{mult}(\mathbb{E})$ defined by the complex of multiplicative sections. 

Let us consider now an LA-groupoid $\pi:\mathbb{E}\to \mathbb{X}$. In this case, the core bundle $C\to X_0$ is naturally a Lie algebroid and the complex of multiplicative sections is part of a crossed module of Lie algebras $\Gamma(C)\to \Gamma_{mult}(\mathbb{E})\to \mathrm{Der}(\Gamma(C))$ where the last map $e\mapsto D_e$ is given by 

\begin{equation}\label{eq:multiplicativecrossedmodule}
D_e(c)=[e,c^r]|_{X_0}. 
\end{equation}

\noindent In other words, the category of multiplicative sections of $\mathbb{E}$ inherits a natural structure of strict Lie 2-algebra which we denote by $\mathbb{L(E)}$. For more details see \cite{OrtizWaldron}.

\begin{example}\label{ex:crossedmultiplicativevectorfields}
Let $\mathbb{X}$ be a Lie groupoid with Lie algebroid $A_{\mathbb{X}}$. As shown in \cite{OrtizWaldron} (see also \cite{BEL}), the category of multiplicative sections of the tangent LA-groupoid $T\mathbb{X}\to \mathbb{X}$ is given by the crossed module $\Gamma(A_{\mathbb{X}})\to \mathfrak{X}_{mult}(\mathbb{X})\to \mathrm{Der}(\Gamma(A_{\mathbb{X}}))$, where $\mathfrak{X}_{mult}(\mathbb{X})$ denotes the Lie algebra of multiplicative vector fields on $\mathbb{X}$ of Mackenzie and Xu \cite{mackenzieXu}.
\end{example}

\begin{example}\label{ex:crossedmultiplicativefunctions}
Let $\mathbb{X}=(X_1\rr X_0)$ be a Lie groupoid and $\mathbb{V}:=V_{-1}\oplus V_0$ the 2-vector defined by a 2-term cochain complex $\partial:V_{-1}\to V_0$. The trivial VB-groupoid $\mathbb{X}\times \mathbb{V}\to \mathbb{X}$ has core bundle $C=X_0\times V_{-1}\to X_0$, the space of multiplicative sections is $\mathrm{Hom}_{gpd}(\mathbb{X},\mathbb{V})$ and the complex of multiplicative sections is given by 

$$C^{\infty}(X_0,V_{-1})\to \mathrm{Hom}_{gpd}(\mathbb{X},\mathbb{V}); f\mapsto (t^*f-s^*f)\oplus s^*(\partial\circ f).$$

The corresponding 2-vector space is denoted $\mathbb{F(X,V)}$ and referred to as the 2-vector space of \textbf{$\mathbb{V}$-valued multiplicative functions} on $\mathbb{X}$.
\end{example}

Let us consider now a $\GG$-equivariant LA-groupoid $\pi:\mathbb{E}\to \mathbb{X}$ with core bundle $C\to X_0$. Since $G_0$ acts on $\pi:\mathbb{E}\to \mathbb{X}$ by LA-groupoid automorphisms, the core $C\to X_0$ inherits the structure of a $G_0$-equivariant Lie algebroid. Also, since $t:\mathbb{E}\to \mathbb{X}$ is $\GG$-equivariant (Remark \ref{rmk:structuralequivariance}), the core complex $\partial:C\to E_0$ is a 2-term complex of $G_0$-equivariant vector bundles, i.e. $\partial$ is $G_0$-equivariant. It is straightforward to check the following.

\begin{proposition}\label{prop:coreequivariant}

Let $\pi:\mathbb{E}\to \mathbb{X}$ be a $\GG$-equivariant LA-groupoid with core $C\to X_0$. If the action is principal, then $G_0\curvearrowright C$ is also principal and quotient LA-groupoid $\mathbb{E}/\GG\to \mathbb{X}/\GG$ has core bundle $C/G_0\to X_0/G_0$ with core anchor map $\overline{\partial}:C/G_0\to E_0/G_0; [c]\mapsto [\partial(c)]$. Additionally, the space of multiplicative sections of $\mathbb{E}/\GG$ identifies with $\Gamma_{mult}(\mathbb{E})^\GG$ and the corresponding crossed module of multiplicative sections is isomorphic to

$$\Gamma(C)^{G_0}\to \Gamma_{mult}(\mathbb{E})^\GG \to \mathrm{Der}(\Gamma(C)^{G_0}),$$

\noindent where the maps are respectively given the restrictions of \eqref{eq:complexmultiplicative} and \eqref{eq:multiplicativecrossedmodule} to invariant sections.

\end{proposition}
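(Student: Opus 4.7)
The proof proceeds in four steps, following the logical order of the statement. First, I would establish principality of $G_0\curvearrowright C$. Since $\GG$ acts on $\mathbb{E}$ by LA-groupoid automorphisms and the structural maps of $\mathbb{E}\to\mathbb{X}$ are equivariant (see Remark \ref{rmk:structuralequivariance}), the source map $s:E_1\to X_1$ is $\GG$-equivariant and the unit inclusion $X_0\hookrightarrow X_1$ is $G_0$-equivariant. Hence the $G_1$-action on $E_1$ restricts to a $G_0$-action on $C=\ker(s)|_{X_0}\subseteq E_1|_{X_0}$. Freeness and properness of $G_0\curvearrowright C$ follow from the corresponding properties of $G_0\curvearrowright X_0$ via the equivariant projection $C\to X_0$; combined with linearity of the action this gives a vector bundle $C/G_0\to X_0/G_0$.

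Second, I would show that $C/G_0\to X_0/G_0$ is the core of the quotient LA-groupoid $\mathbb{E}/\GG\to\mathbb{X}/\GG$, and that $\overline{\partial}$ is induced by $\partial$. The key point is that taking kernels of equivariant linear maps commutes with quotients by principal actions; applied to the $\GG$-equivariant source $s:E_1\to X_1$, this gives $\ker(\bar s)|_{X_0/G_0}\cong C/G_0$. Similarly, equivariance of the target map $t:\mathbb{E}\to\mathbb{X}$ guarantees that $\partial=t|_C$ descends to the well-defined map $\overline{\partial}([c])=[\partial(c)]$, and the Lie algebroid structure on $C$ descends to $C/G_0$ since $G_0$ acts by Lie algebroid automorphisms.

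Third, I would identify $\Gamma_{mult}(\mathbb{E}/\GG)\cong\Gamma_{mult}(\mathbb{E})^\GG$. For a multiplicative section $\bar{e}:\mathbb{X}/\GG\to\mathbb{E}/\GG$, pulling back along the quotient groupoid morphism $q:\mathbb{X}\to\mathbb{X}/\GG$ and lifting through the principal $\GG$-bundle $\mathbb{E}\to\mathbb{E}/\GG$ produces a $\GG$-invariant Lie groupoid morphism $e:\mathbb{X}\to\mathbb{E}$ with $\pi\circ e=\id_{\mathbb{X}}$. Conversely, any $\GG$-invariant multiplicative section descends uniquely to a multiplicative section of $\mathbb{E}/\GG$ by universal property of the principal quotient; the same argument applied to the core gives $\Gamma(C/G_0)\cong\Gamma(C)^{G_0}$.

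Finally, I would verify that under these identifications the crossed module $\Gamma(C/G_0)\to\Gamma_{mult}(\mathbb{E}/\GG)\to\mathrm{Der}(\Gamma(C/G_0))$ of the quotient LA-groupoid (as constructed in \cite{OrtizWaldron}) corresponds to the restricted crossed module $\Gamma(C)^{G_0}\to\Gamma_{mult}(\mathbb{E})^\GG\to\mathrm{Der}(\Gamma(C)^{G_0})$. This reduces to checking that the two defining formulas $\delta(c)=c^r-c^l$ (for the differential) and $D_e(c)=[e,c^r]|_{X_0}$ (for the derivation action) are natural with respect to equivariance: since the right and left translation sections $c^r,c^l$ and the bracket $[\cdot,\cdot]$ on $\Gamma_{mult}(\mathbb{E})$ are built entirely from structural maps and the Lie algebroid bracket on $E_1$, all of which are $\GG$-equivariant, the formulas descend as claimed. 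The main obstacle in the proof is the second step, where one must carefully verify that the categorical operation of taking the core of a VB-groupoid commutes with the principal quotient; this requires combining linearity of the action with equivariance of both the source map and the unit inclusion, and checking that the resulting vector bundle structure matches the one defined intrinsically on $\mathbb{E}/\GG$.
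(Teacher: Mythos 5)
Your plan is correct: the paper itself offers no proof of this proposition, stating only that it is ``straightforward to check,'' and your four steps (principality of $G_0\curvearrowright C$, commuting the core construction with the principal quotient, the standard correspondence between sections of a quotient bundle and invariant sections, and descent of the two defining formulas by equivariance of the structural maps and the bracket) constitute exactly the routine verification the authors have in mind. No gaps.
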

 
This allows to describe the crossed module of multiplicative sections of both the adjoint LA-groupoid and the Atiyah LA-groupoid of a principal 2-bundle.

\begin{example}\label{ex:coreadjoint}

Let $\p\curvearrowleft \GG\to \mathbb{X}$ be a a principal 2-bundle and $\Ad(\p)\to \mathbb{X}$ be the adjoint LA-groupoid of Example \ref{ex:adjointVB}. If $\mathbb{g}=(\mathfrak{g}_1\rr \mathfrak{g}_0)$ is the Lie 2-algebra of $\GG$, let $\partial:\mathfrak{h}\to \mathfrak{g}_0$ be the 2-term complex defined by the crossed module associated to $\mathbb{g}$. The core bundle of $\Ad(\p)$ is  the Lie algebra bundle $P_{0}\times_{G_0} \mathfrak{h}\to X_0$ hence sections of the core bundle are elements in $C^{\infty}(P_0,\mathfrak{h})^{G_0}$. The space of multiplicative sections of $\Ad(\p)$ is given by

$$\Gamma_{mult}(\Ad(\p))\cong \Hom_{gpd}(\p,\mathbb{g})^{\GG},$$

\noindent where the right hand side denotes the space of $\GG$-equivariant groupoid morphisms $F:\p\to \mathbb{g}$. By $\GG$-equivariant we mean

$$F_1(pg)=\Ad_{g^{-1}}^{G_1}f_1(p),\quad F_0(p_0g_0)=\Ad_{g^{-1}_0}^{G_0}f_0(p_0),$$

\noindent for every $p\in P_1, p_0\in P_0$ and $g\in G_1,g_0\in G_0$. As a consequence of Proposition \ref{prop:coreequivariant} and Example \ref{ex:crossedmultiplicativefunctions}, the complex of multiplicative sections of $\Ad(\p)$ is

$$C^{\infty}(P_0,\mathfrak{h})^{G_0}\to \Hom_{gpd}(\p,\mathbb{g})^{\GG}; f\mapsto (t^*f-s^*f,s^*(\partial \circ f)),$$

\noindent where we have used the decomposition $\mathbb{g}=\mathfrak{h}\oplus \mathfrak{g}_1$ in terms of crossed module components.

\end{example}

\begin{example}\label{ex:coreatiyah}

Let $\At(\p)\to \mathbb{X}$ be the Atiyah LA-groupoid of a principal 2-bundle $\p\curvearrowleft \GG\to \mathbb{X}$. In this case, the core bundle of $T\p\to \p$ is the Lie algebroid $A_{\p}$ of $\p$, hence the core bundle of $\At(\p)$ is $A_{\p}/G_0$. The space of multiplicative sections of the Atiyah LA-groupoid is given by

$$\Gamma_{mult}(\At(\p))\cong \mathfrak{X}_{mult}(\p)^{\GG},$$

\noindent where the right hand side is the space of $\GG$-invariant multiplicative vector fields on $\p$. It follows from Proposition \ref{prop:coreequivariant} and Example \ref{ex:crossedmultiplicativevectorfields} that the complex of multiplicative sections of $\At(\p)$ is given by

$$\Gamma(A_{\p})^{G_0}\to \mathfrak{X}_{mult}(\p)^{\GG}; a\mapsto a^r-a^l.$$

\end{example}

Just as a usual principal bundle has an associated Atiyah sequence, for a  given principal 2-bundle the adjoint LA-groupoid and the Atiyah LA-groupoid fit into an exact sequence in the category of LA-groupoids \cite{CCP}.

\begin{definition}\label{def:atiyahsequence}

Let $\pi:\p\curvearrowleft \GG\to \mathbb{X}$ be a principal 2-bundle. The \textbf{Atiyah sequence} of $\p$ is the exact sequence of LA-groupoids

$$\begin{tikzcd}
    0\arrow[r]&\Ad(\mathbb{P})\arrow[r,"\iota"]\arrow[rd]&\At(\mathbb{P})\arrow[d]\arrow[r,"\overline{T\pi}"]&T\mathbb{X}\arrow[dl]\arrow[r]&0.\\
&&\mathbb{X}&&\end{tikzcd}$$

\end{definition}

As shown in \cite{OrtizWaldron}, every LA-groupoid morphism covering the identity induces a Lie 2-algebra morphism between the corresponding categories of multiplicative sections. In particular, the Atiyah sequence of $\p$ determines an extension of Lie 2-algebras

\begin{equation}\label{eq:Lie2atiyah}
\begin{tikzcd}
    0\arrow[r]&\mathbb{L}(\Ad(\mathbb{P}))\arrow[r,"\iota"]&\mathbb{L}(\At(\mathbb{P}))\arrow[r,"\overline{T\pi}"]&\mathbb{L}(T\mathbb{X})\arrow[r]&0,\\
\end{tikzcd}
\end{equation}

\noindent referred to as the \textbf{Atiyah extension} of $\pi:\p\curvearrowleft \GG\to \mathbb{X}$ and denoted by $\mathbb{At}(\p)$.


\subsection{Connections on principal 2-bundles}

Let $\mathbb{X}=(X_1\rr X_0)$ be a Lie groupoid and consider its nerve $N\mathbb{X}=(X_{n})$ with its usual structure of simplicial manifold. Here $X_n$ denotes the space of $n$-simplices, which are strings of arrows $(x_1,...,x_n)$ with $s(x_i)=t(x_{i+1}), i=1,...,n-1$ and face maps $d_i:X_{n+1}\to X_n; i=1,...,n+1$. The simplicial structure gives rise to a complex $\delta:\Omega^{\bullet}(X_n)\to \Omega^{\bullet}(X_{n+1})$ defined as the alternating sum of pullbacks by the face maps. This differential does not change the form degree, but it increases the simplicial degree by +1. For instance, $\delta\eta=t^*\eta-s^*\eta$ for every $\eta\in \Omega^{k}(X_0)$ and $\delta\omega=pr^*_1\omega-m^*\omega+pr^*_2\omega$ if $\omega\in\Omega^{k}(X_1)$, where $m,pr_1,pr_2:X_1\times_{X_0}X_1\to X_1$ denote the groupoid multiplication and the canonical projections, respectively.

The \textbf{Bott-Shulman-Stasheff complex} of $\mathbb{X}$ is the double complex $\Omega^{\bullet}(X_{\bullet})$ with vertical differential given by the usual de Rham differential of forms and horizontal differential defined by the simplicial structure. The \textbf{de Rham complex} of $\mathbb{X}$ is the total complex:

$$C^{p}_{dR}(\mathbb{X}):=\bigoplus_{i+j=p}\Omega^i(X_j),$$

\noindent with differential $D\omega:=d\omega+(-1)^{j+1}\delta\omega$ for every $\omega \in \Omega^{i}(X_j)$.

A \textbf{multiplicative $k$-form} on  $\mathbb{X}$ is a $k$-form $\omega\in \Omega^k(X_1)$ which satisfies $\delta\omega=0$. That is,

\begin{equation}\label{eq:multiplicativeform}
m^*\omega=pr^*_1\omega+pr^*_2\omega.
\end{equation}

\noindent As shown in \cite{bursztyn-cabrera, BCO}, a $k$-form $\omega\in \Omega^k(X_1)$ is multiplicative if and only if the induced bundle map $\omega:\oplus^{k-1}T\mathbb{X}\to T^*\mathbb{X}$ is a Lie groupoid morphism.

Let $\eta\in \Omega^{k+1}_{cl}(X_0)$ be a closed form. We say that a multiplicative form $\omega\in \Omega^k(X_1)$ is \textbf{closed} with respect to $\eta$ if $d\omega=t^*\eta-s^*\eta$. In other words, the element $(\omega+\eta)\in C^{k+1}_{dR}(\mathbb{X})$ is a cocycle in the de Rham complex of $\mathbb{X}$.

\begin{remark}
A multiplicative $k$-form on $\mathbb{X}$ should be thought of as a $k$-form of degree 1 on the quotient stack $[X_0/X_1]$ since it yields a morphism of complexes $\bigoplus^{k-1} T_{\mathbb{X}}\to T^*_{\mathbb{X}}[1]$, where $T_{\mathbb{X}}:=(A_{\mathbb{X}}\to TX_0)$ is the tangent complex and $T^*_{\mathbb{X}}:=(T^*X_0\to A^*_{\mathbb{X}})$ is the cotangent complex. Similarly, a multiplicative $k$-form on $\mathbb{X}$ which is closed with respect to $\eta\in \Omega^{k+1}_{cl}(X_0)$ has to be thought of as a closed $k$-form on the quotient stack $[X_0/X_1]$. The special case of 2-forms arises in the context of Dirac structures \cite{BCWZ} as well as in shifted symplectic geometry \cite{ptvv}, see also \cite{calaque}.
\end{remark}

 A more general notion of multiplicative form with values in a VB-groupoid is considered in  \cite{Drummond-Egea}. We are interested in the particular case of $\mathbb{g}$-valued multiplicative $k$-forms on a Lie groupoid $\mathbb{X}$ where $\mathbb{g}=(\mathfrak{g}_1\rr \mathfrak{g}_0)$ is a Lie 2-algebra.

 \begin{definition}\label{def:gvaluedmultiplicative}
 
 A \textbf{$\mathbb{g}$-valued multiplicative $k$-form} on a Lie groupoid $\mathbb{X}=(X_1\rr X_0)$ is a $k$-form $\omega_1\in \Omega^k(X_1,\mathfrak{g}_1)$ such that $\omega_1:\oplus^kTX_1\to \mathfrak{g}_1\times X_1$ is a Lie groupoid morphism covering $\omega_0:\oplus^kTX_0\to \mathfrak{g}_0\times X_0$ where $\omega_0\in \Omega^k(X_0,\mathfrak{g}_0)$.  
 
 \end{definition}
 
 If $\partial:\mathfrak{h}\to \mathfrak{g}_0$ is the cochain complex associated with $\mathbb{g}$, then a $\mathbb{g}$-valued multiplicative $k$-form on $\mathbb{X}$ is given by a pair of $k$-forms $\bar\omega\in \Omega^k(X_1,\mathfrak{h})$ and $\omega_0\in \Omega^k(X_0,\mathfrak{g}_0)$ with

\begin{equation}\label{eq:splitmultiplicativeform}
\omega_1=\bar\omega + s^*\omega_0,
\end{equation}
\noindent and satisfying the following conditions:

\begin{align}\label{eq:conditionssplitforms}
m^*\bar\omega=&pr^*_1\bar\omega+pr^*_2\bar\omega\\
\partial \circ \bar\omega=&t^*\omega_0-s^*\omega_0. \nonumber
\end{align}

\noindent For more details, see \cite{Drummond-Egea}.

\begin{definition}\label{def:gvaluedclosed}
Let $\mathbb{X}=(X_1\rr X_0)$ be a Lie groupoid and $\eta\in\Omega^{k+1}_{cl}(X_0,\mathfrak{h})$ be a closed form. We say that a $\mathbb{g}$-valued multiplicative $k$-form $\omega_1=(\bar\omega,s^*\omega_0)$ is \textbf{closed} with respect to $\eta$ if the following conditions hold:

\begin{align}\label{eq:gvaluedclosed}
d\omega_0=&\partial\circ \eta\\
d\bar\omega=&(t^*\eta-s^*\eta)\nonumber
\end{align}

\end{definition}

\begin{remark}
A $\mathbb{g}$-valued multiplicative $k$-form on $\mathbb{X}$ should be thought of as a $k$-form of degree 1 on the stack $[X_0/X_1]$ with values in the stacky Lie algebra $[\mathfrak{g}_0/\mathfrak{g}_1]$. A similar interpretation holds for $\mathbb{g}$-valued multiplicative $k$-forms which are closed with respect to $\eta\in\Omega^{k+1}_{cl}(X_0,\mathfrak{h})$.
\end{remark}

Just as multiplicative $k$-forms with respect to closed $(k+1)$-form are $(k+1)$-cocycles in the de Rham complex of a Lie groupoid, closed $\mathbb{g}$-valued multiplicative $k$-form are cocycles in the differential graded Lie algebra given by the de Rham complex of a Lie groupoid \emph{with values} in a Lie 2-algebra \cite{waldorf1}. This will be discussed below.

Let $\mathbb{g}$ be a Lie 2-algebra with associated 2-term cochain complex $\partial:\mathfrak{h}\to \mathfrak{g}_0$. Due to notation purposes, it is useful to see this 2-term complex as $\partial_k:\mathfrak{v}_k\to \mathfrak{v}_{k+1}$ where $\partial_{-1}:=\partial:\mathfrak{h}\to \mathfrak{g}_0$ and $\mathfrak{v}_k=0$ for every $k\neq -1,0$. Let $\mathbb{X}=(X_1\rr X_0)$ be a Lie groupoid and consider the simplicial manifold given by the nerve $N\mathbb{X}=(X_{n})$. There is a total complex

$$C^p(\mathbb{X},\mathbb{g}):=\bigoplus_{i+j+k=p}\Omega^i(X_j,\mathfrak{v}_k),$$

\noindent with differential $C^p(\mathbb{X},\mathbb{g})\to C^{p+1}(\mathbb{X},\mathbb{g})$ given by $d+(-1)^i\delta+(-1)^{i+j}\partial_k$ on each component $\Omega^i(X_j,\mathfrak{v}_k)$. Here $d$ denotes the usual de Rham differential of forms and  $\delta$ is the simplicial differential on forms. As explained in \cite{waldorf1}, if one considers elements in $C^p(\mathbb{X},\mathbb{g})$ whose form degree is at least $p$, then the total differential does not preserve such an elements. However, one can co-restrict the total differential yielding a map $\mathbb{D}:C^p_{dR}(\mathbb{X},\mathbb{g})\to C^{p+1}_{dR}(\mathbb{X},\mathbb{g})$ which actually defines a differential. An element $\Phi\in C^p_{dR}(\mathbb{X},\mathbb{g})$ is given by a triple of differential forms: 

$$\Phi^a\in \Omega^p(X_0,\mathfrak{g}_0); \quad \Phi^b\in \Omega^p(X_1,\mathfrak{h}); \quad \Phi^c\in \Omega^{p+1}(X_0,\mathfrak{h}),$$

\noindent such that

\begin{align}\label{eq:waldrofsplitforms}
m^*\Phi^b=&pr^*_1\Phi^b+pr^*_2\Phi^b\\
\partial \circ \Phi^b=&t^*\Phi^a-s^*\Phi^a. \nonumber
\end{align}

\noindent The component $\Phi^c$ controls the differential of $\Phi$. More concretely, the differential $\mathbb{D}\Phi\in C^{p+1}_{dR}(\mathbb{X},\mathbb{g})$ is given by the triple $((\mathbb{D}\Phi)^a,(\mathbb{D}\Phi)^b,(\mathbb{D}\Phi)^c)$ with

\begin{align}\label{eq:waldorfsplitdifferential}
(\mathbb{D}\Phi)^a=&d\Phi^a-(-1)^p\partial\circ \Phi^c\\
(\mathbb{D}\Phi)^b=&d\Phi^b-(-1)^p\delta_0\circ \Phi^c\\
(\mathbb{D}\Phi)^c=&d\Phi^c
\end{align}

\begin{remark}\label{rmk:waldorfvsmultiplicative}

Note that $\Phi\in C^p_{dR}(\mathbb{X},\mathbb{g})$ if and only if $\Phi^b+s^*\Phi^a$ is a $\mathbb{g}$-valued multiplicative form on $\mathbb{X}$ together with conditions on $\Phi^c$ \eqref{eq:waldorfsplitdifferential} which encodes information about derivatives. In \cite{waldorf1}, elements in $C^{\bullet}_{dR}(\mathbb{X},\mathbb{g})$ are called $\mathbb{g}$-valued differential forms on $\mathbb{X}$. It is important to observe that in this paper we will always consider \emph{multiplicative} $\mathbb{g}$-valued forms in the sense of Definition \ref{def:gvaluedmultiplicative}, which is a particular case of \cite{waldorf1}. Also, note that $\Phi=(\Phi^a,\Phi^b,\Phi^c)\in C^p_{dR}(\mathbb{X},\mathbb{g})$ being a cocycle, i.e. $\mathbb{D}\Phi=0$, is equivalent to saying that $\Phi^b+s^*\Phi^a$ is a $\mathbb{g}$-valued multiplicative $p$-form on $\mathbb{X}$ which is closed with respect to $\Phi^c$, see Definition \ref{def:gvaluedclosed}.
\end{remark}

We are particularly concerned with the case of $\mathbb{g}$-valued multiplicative \emph{connection} 1-forms.

\begin{definition}\label{def:2connection}

Let $\p\curvearrowleft \GG\to \mathbb{X}$ be a principal 2-bundle and let $\mathbb{g}$ be the Lie 2-algebra of $\GG$. A $\mathbb{g}$-valued multiplicative 1-form $\theta$ on $\p$ is called a \textbf{2-connection} if it is a connection on both morphisms and objects.

\end{definition}

In other words, a 2-connection is given by a pair $\theta=(\theta_1,\theta_0)$ of usual principal connections $\theta_i\in \Omega^1(P_i,\mathfrak{g}_i), i=1,2$ such that $\theta_1:TP_1\to \mathfrak{g}_1\times P_1$ is a Lie groupoid morphism covering $\theta_0:TP_0\to \mathfrak{g}_0 \times P_0$. As observed in \eqref{eq:splitmultiplicativeform},  one can write $\theta_1=\bar\theta+s^*\theta_0$ where $\bar\theta\in\Omega^1(P_1,\mathfrak{h})$ satisfies the identities in \eqref{eq:conditionssplitforms}. From now on a 2-connection will be written $\theta=\bar\theta+s^*\theta_0$.

\begin{remark}\label{rmk:relationwaldorf}
Connections on principal 2-bundles were introduced in \cite{waldorf1} as a triple $(\theta^a,\theta^b,\theta^c)$ of forms $\theta^a\in \Omega^1(P_0,\mathfrak{g}_0), \theta^b\in \Omega^1(P_1,\mathfrak{h})$ and $\theta^c\in\Omega^2(P_0,\mathfrak{h})$ with $\theta^a$ and $\theta^b$ satisfying identities \eqref{eq:waldrofsplitforms} and \eqref{eq:waldorfsplitdifferential} together with a suitable notion of invariance. Every 2-connection $\theta=\bar\theta+s^*\theta_0$ in our sense defines such a triple as $(\theta_0,\bar\theta,0)$, however the notion of invariance is different from that of \cite{waldorf1}. Regarding the term $\theta^c$, as shown in \cite{waldorf1} it only plays a role when introducing the curvature of a connection, once connections in \cite{waldorf1} are defined in order to have a well-defined parallel transport not only along paths but also along surfaces called bigons. In this paper, 2-connections are defined as a model for connections on principal bundles over differentiable stacks. We also should mention that the notion of 2-connection of our work coincides with the one in \cite{CCP} .
\end{remark}

Let us see some examples of principal 2-bundles with 2-connections. For more examples, see \cite{waldorf1}.

\begin{example}

Every usual principal $G$-bundle $P\to X$ can be seen as a principal 2-bundle $\p=(P\rr P)\to \mathbb{X}=(X\rr X)$ as in Example \ref{ex:2bundlebyusualbundle}. Every connection 1-form $\theta_0$ on $P$ can be seen as a 2-connection $\bar\theta + s^*\theta_0$ on $\p$ with $\bar\theta=0$. 
\end{example}
\begin{example}

Let $\GG=(G_1\rr G_0)$ be the Lie 2-group defined by a crossed module $H\to G_0$. The Maurer-Cartan form $\theta_{MC}\in\Omega^1(\GG,\mathbb{g})$ defines a 2-connection on $\GG$ seen as a principal 2-bundle $\GG\curvearrowleft \GG\to \{*\}$. One has $\theta_{MC}=(\bar\theta, \theta_0)$ where $\theta_0$ is the Maurer-Cartan form on $G_0$ and $\bar\theta\in\Omega^1(H\ltimes G, \mathfrak{h})$ is defined by

$$\bar\theta_{(h,g)}=\Ad_{g^{-1}}(\theta^{H}_{MC})_h,$$

\noindent where $\theta^{H}_{MC}$ is the Maurer-Cartan form on $H$. See \cite{waldorf1} for details.

\end{example}

\begin{example}

Let $\mathcal{G}=(Y,P,\mu)$ be an $\mathbb{S}^1$-gerbe over $X$, see Example \ref{ex:2bundlesgerbes}. A \textbf{connection} on $\mathcal{G}$ is a connection 1-form $\theta$ on the principal $\mathbb{S}^1$-bundle $P\to Y\times_{X}Y$ which is invariant by the gerbe multiplication. That is,

\begin{equation}\label{eq:gerbeconnectioninvariance}
\mu^*(\hat{d_1}^*\theta)=(\hat{d_2}^*\theta)\otimes (\hat{d_0}^*\theta).
\end{equation}

\noindent Here $\hat{d}_i:d^*_iP\to P$ is the natural map covering the face map $d_i$ on $Y_{3}$. One easily observes that condition \eqref{eq:gerbeconnectioninvariance} is equivalent to $\theta$ being a multiplicative 1-form on the Lie groupoid $P\rr Y$. Hence, connections on $\mathbb{S}^1$-gerbes can be regarded as 2-connections on its underlying principal 2-bundle. For more details see \cite{KrepskiVaughan}.

\end{example}

It is well-known that connections on a usual principal bundle are in one-to-one correspondence with splittings of its Atiyah sequence. One immediately observes that the proof can carry over verbatim in the case of a principal 2-bundle. Hence, there is a one-to-one correspondence between 2-connections on a principal 2-bundle $\p\curvearrowleft \GG\to \mathbb{X}$  and splittings 
of the Atiyah sequence of Definition \ref{def:atiyahsequence}. By a splitting we mean a VB-groupoid morphism $h:T\mathbb{X}\to \At(\mathbb{P})$ with $\tilde{d\pi}\circ h=\id_{T\mathbb{X}}$. Such a splitting will be referred to as a \textbf{multiplicative horizontal lift} of $\At(\mathbb{P})$.


\subsection{The curvature of a 2-connection}\label{sec:curvature-of-2-connection}

Our main goal now is to define the curvature of a 2-connection. For that, we start by fixing the notation in the case of ordinary principal bundles.

Let $P\curvearrowleft G \to X$ be a principal bundle with connection $\theta\in\Omega^1(P,\mathfrak{g})$. Let $\Omega(P,V)$ be the space of $V$-valued differential forms on $P$ where $V$ is a vector space. The \textbf{covariant derivative} of $\omega\in \Omega(P,V)$  is defined as $\nabla^{\theta}\omega:=(d\omega)^h$, the horizontal part of the usual exterior derivative. The \textbf{curvature} of a connection $\theta$ is the 2-form $\Omega:=\nabla^{\theta}\theta\in \Omega^2(P,\mathfrak{g})$. The curvature 2-form can be written as

\begin{equation}\label{eq:MCcurvature}
\Omega=d\theta+\frac{1}{2}[\theta,\theta].
\end{equation}

If $\rho:G\to GL(V)$ a representation, we say that $\omega\in \Omega^k(P,V)$ is \textbf{basic} if it is horizontal and invariant in the sense that $R^*_g\omega=\rho_{g^{-1}}\omega$ for every $g\in G$. We denote by $\Omega^k_{\rho}(P,V)$ the space of basic $k$-forms on $P$ with values in $V$. The space $\Omega^k_{\rho}(P,V)$ identifies canonically with $\Omega^k(X,P\times_GV)$, the space of $k$-forms on $X$ with values in the associated bundle $P\times_GV\to X$. The curvature of a connection is basic with respect to the adjoint representation, hence it can be seen as a 2-form on $X$ with values in the adjoint bundle $\Ad(P)$.

For basic forms $\omega\in \Omega^k_{\rho}(P,V)$ the covariant derivative has the following explicit formula:

\begin{equation}\label{eq:covariantbasic}
\nabla^{\theta}\omega=d\omega + \rho_{*}(\theta\wedge \omega).
\end{equation}

\noindent Here $\rho_{*}(\theta\wedge \omega)\in \Omega^{k+1}(P,V)$ is defined as

$$\rho_{*}(\theta\wedge \omega)(U_1,...,U_{k+1})=\sum_{\sigma \in \Sigma_{k+1}}(-1)^{\sigma}\rho_{*}(\theta(U_{\sigma(1)}))\omega(U_{\sigma(2)},...,U_{\sigma(k+1)}),$$

\noindent where $\rho_{*}:\mathfrak{g}\to \mathfrak{gl}(V)$ is the induced Lie algebra representation.

We proceed now to define the curvature of a 2-connection on a principal 2-bundle.

\begin{definition}\label{def:2curvature}

Let $\theta=\bar\theta+s^*\theta_0\in \Omega^1(\p,\mathbb{g})$ be a 2-connection on a principal 2-bundle $\p\curvearrowleft \GG\to \mathbb{X}$. The \textbf{curvature} of $\theta$ is the 2-form $\Omega:=d\theta+\frac{1}{2}[\theta,\theta]\in \Omega^2(\p,\mathbb{g})$.
\end{definition}

\begin{remark}
It is reasonable to define the curvature of a 2-connection $\theta=\bar\theta+s^*\theta_0$ by looking at the expression \eqref{eq:MCcurvature} of each of its components $\bar\theta$ and $\theta_0$. However, the 2-form $d\bar\theta+\frac{1}{2}[\bar\theta,\bar\theta]\in \Omega^2(P_1,\mathfrak{h})$ fails to be multiplicative, hence it does not provide a good notion of curvature in the setting of principal bundles over Lie groupoids.
\end{remark}

Note that the curvature $\Omega\in \Omega^2(\p,\mathbb{g})$ as in Definition \ref{def:2curvature} is a multiplicative 2-form. This follows immediately from the expression $\Omega=(d\theta)^h$ combined with the fact that $d\theta$ is multiplicative and the horizontal projection $TP\to H:=\Ker(\theta)$ is a VB-groupoid morphism. As a consequence, one can write the curvature as

\begin{equation}\label{eq:2curvaturecomponents}
\Omega=\bar\Omega+s^*\Omega_0,
\end{equation}

\noindent where $\bar\Omega\in\Omega^2(P_1,\mathfrak{h})$ and $\Omega_0\in\Omega^2(P_0,\mathfrak{g}_0)$. More concretely, it follows from Definition \ref{def:2curvature} that $\Omega_0=d\theta_0+\frac{1}{2}[\theta_0,\theta_0]$ is just the curvature of $\theta_0$, and

$$\bar\Omega=d\bar\theta+\frac{1}{2}[\bar\theta,\bar\theta]+[s^*\theta_0,\bar\theta].$$

Now we describe $\bar\Omega$  in \eqref{eq:2curvaturecomponents} in terms of the covariant derivative. For that, recall that the source map $P_1\to P_0$ is a morphism of principal bundles with connections, hence the corresponding covariant derivatives are related by $\nabla^{\theta}\circ s^*=s^*\circ \nabla^{\theta_0}$. Since $\Omega=\nabla^{\theta}\theta$ and $\theta=\bar\theta+s^*\theta_0$, one has

\begin{align*}
\Omega=&\nabla^{\theta}\bar\theta+s^*\nabla^{\theta_0}\theta_0\\
=&\nabla^{\theta}\bar\theta + s^*\Omega_0.
\end{align*}

\noindent The uniqueness of the decomposition \eqref{eq:2curvaturecomponents} implies $\bar\Omega:=\nabla^{\theta}\bar\theta$. In particular, as a consequence of Bianchi identity $\nabla^{\theta}\Omega=0$, one has the following Bianchi identity for $\bar\Omega$:

\begin{equation}\label{eq:Bianchibaromega}
 \nabla^{\theta}\bar\Omega=0.
\end{equation}

\begin{remark}\label{rmk:basiccurvature2connection}

Let $\Omega^2(\p,\mathbb{g})$ be the curvature of a 2-connection $\theta\in \Omega^1(\p,\mathbb{g})$. Since $\Omega=\bar\Omega+ s^*\Omega_0$ is basic with respect to $\pi:\p\curvearrowleft \GG\to \mathbb{X}$, there exists a unique \emph{multiplicative} 2-form $\omega\in \Omega^2(\mathbb{X},\Ad(\p))$ with $\pi^*\omega=\Omega$. As shown in \cite{Drummond-Egea}, a VB-groupoid valued form is completely determined by its core and side components, which in the case of $\omega\in \Omega^2(\mathbb{X},\Ad(\p))$ such a components are $\bar\omega\in\Omega^2(X_1,t^*(P_0\times_{G_0} \mathfrak{h}))$ and $\omega_0\in \Omega^2(X_0,P_0\times_{G_0}\mathfrak{g}_0)$, and

$$\omega=\bar\omega+s^*\omega_0,$$

\noindent Since $\pi^*\omega=\Omega$, then $\pi^*_0\omega_0=\Omega_0$ and $\pi^*_1\bar\omega=\bar\Omega$. Also, due to Bianchi identity \eqref{eq:Bianchibaromega} $\nabla^{\theta}\bar\Omega=0$, one has that $\nabla_0\bar\omega=0$ where $\nabla_0$ is a covariant exterior derivative defined by the induced connection on $t^*(P_0\times_{G_0} \mathfrak{h})\to X_1$.

\end{remark}





It is useful to consider 2-connections with prescribed curvature. This is formalized by introducing the notion of a curving 2-form. Let $G_0\to H$ be the crossed module associated to the Lie 2-group $\GG$. One of the components of the crossed module is a Lie group morphism $\alpha:G_0\to \Aut(H)$ which yields a Lie group representation $\tau:G_0\to GL(\mathfrak{h})$. In particular, one can look at basic forms $\Omega_{\tau}(P_0,\mathfrak{h})$ with respect to the representation $\tau$.

\begin{definition}\label{def:curving}
Let $\theta=\bar\theta+s^*\theta_0$ be a 2-connection on a principal 2-bundle $\p\curvearrowleft\GG\to \mathbb{X}$ with curvature $\Omega=\bar\Omega + s^*\Omega_0$.  A \textbf{curving} for $\theta$ is a basic 2-form $B\in\Omega^2_{\tau}(P_0,\mathfrak{h})$ with:

\begin{enumerate}

\item $\Omega_0+\partial\circ B=0$, and
\item $\bar\Omega+(t^*-s^*)(B)=0$.
 
\end{enumerate}

\end{definition}

A \textbf{connective structure} on a principal 2-bundle $\p\curvearrowleft\GG\to \mathbb{X}$ is a pair $(\theta,B)$ where $\theta$ is a 2-connection on $\p$ and $B$ is a curving for $\theta$. The \textbf{curvature 3-form} of a connective structure $(\theta,B)$ is the 3-form $\nabla^c_{\theta_0}B\in \Omega^3(P_0,\mathfrak{h})$. Due to \eqref{eq:covariantbasic}, the curvature 3-form has the following expression

\begin{equation}\label{eq:curvature3form}
\nabla^c_{\theta_0}B=dB+\tau_{*}(\theta_0\wedge B),
\end{equation}

\noindent where $\tau_{*}:\mathfrak{g}_0\to \mathfrak{gl}(\mathfrak{h})$ is the Lie algebra representation induced by $\tau:G_0\to GL(\mathfrak{h})$.

The terminology is motivated by the following example.

\begin{example}\label{ex:connectivegerbe}

Let $\mathcal{G}=(Y,P,\mu)$ be an $\mathbb{S}^1$-gerbe over a manifold $X$. A connective structure $(\theta,B)$ on the underlying principal 2-bundle associated to $\mathcal{G}$ is just a connection 1-form on $P\to Y\times_XY$ and $B\in \Omega^2(Y)$ such that $\Omega=-(d^*_1-d^*_0)B$, where $\Omega$ is the curvature of $\theta$. This recovers the notion of connective structure on an $\mathbb{S}^1$-gerbe, \cite{KrepskiVaughan}. In this case, the curvature 3-form is $dB\in \Omega^3(Y)$. Since $dB$ is basic with respect to $q:Y\to X$, there exists a unique closed 3-form $\chi\in\Omega^3_{cl}(X)$ with $dB=q^*\chi$. The 3-form $\chi$ is called the \textbf{3-curvature} of the connective structure $(\theta,B)$ on the gerbe $\mathcal{G}=(Y,P,\mu)$.

\end{example}

\begin{remark} 

As observed in Example \ref{ex:connectivegerbe}, the curvature 3-form of a connective structure on a gerbe $\mathcal{G}=(Y,P,\mu)$ is a closed 3-form on the manifold $X$, which is the base of the submersion $q:Y\to X$. Regarding $X$ as the quotient stack of of the submersion groupoid $Y\times_XY\rr Y$, the curvature 3-form can be seen as a 3-form on the stack $X=[Y/Y\times_XY]$. For a general principal 2-bundle $\p\curvearrowleft \GG\to \mathbb{X}$, the curvature 3-form of a connective structure is an element in $\Omega^3(P_0,\mathfrak{h})$. However, one can see the curvature 3-form as a suitable multiplicative 3-form on $\mathbb{X}$ with values in the VB-groupoid $\Ad(\p)$, which in the case of a regular groupoid $\mathbb{X}$ yields a 3-form on the quotient stack $[X_0/X_1]$ with values in  $\Ad(\p)$. We will explain this below.

\end{remark}

As shown in \cite{Drummond-Egea}, $k$-forms on a Lie groupoid $\mathbb{Y}$ with values in a $VB$-groupoid $\mathbb{E}\to \mathbb{Y}$ determine a cochain complex $\delta_{\mathbb{E}}:C^{\bullet,k}(\mathbb{Y},\mathbb{E})\to C^{\bullet,k}(\mathbb{Y},\mathbb{E})$ whose 1-cocycles are precisely multiplicative $k$-forms. If $C\to Y_0$ is the core bundle of $\mathbb{E}\to \mathbb{Y}$, then $C^{0,k}(\mathbb{Y},\mathbb{E}):=\Omega^k(Y_0,C)$. A degree zero cocycle is called \textbf{$\mathbb{Y}$-invariant}.

In the case of a principal 2-bundle $\p\curvearrowleft \GG \to \mathbb{X}$ and the trivial VB-groupoid $\p\times \mathbb{g}$, one has the complex $\delta_{\p\times\mathbb{g}}:C^{p,2}(\p,\mathbb{g})\to C^{p+1,2}(\p,\mathbb{g})$ of $\mathbb{g}$-valued 2-forms on $\p$ whose 1-cocycles are $\mathbb{g}$-valued multiplicative $2$-forms on $\p$.  A connective structure $\tilde\theta:=(\theta,B)$ on $\p$ allows us to write the curvature 

$$\Omega=-((t^*B-s^*B+s^*(\partial\circ B))=\delta_{\p\times \mathbb{g}} B$$

\noindent as a \emph{multiplicatively exact} 2-form with values in $\mathbb{g}$. The groupoid morphism $\pi:\p\to \mathbb{X}$ defines a cochain map $\pi^*:C^{\bullet,2}(\mathbb{X},\Ad(\p))\to C^{\bullet,2}(\p,\mathbb{g})$, hence the multiplicative 2-form $\omega\in\Omega^2(\mathbb{X},\Ad(\p))$ described in Remark \ref{rmk:basiccurvature2connection} is also multiplicatively exact. Indeed,  $\omega=\delta_{\Ad(\p)}B_0$ where $B_0\in \Omega^2(X_0,P_0\times_{G_0}\mathfrak{h})$ is the unique 2-form with $\pi^*_0B_0=B$.

Let $\nabla^c_{0}:\Omega(X_0,P_0\times_{G_0}\mathfrak{h})\to \Omega(X_0,P_0\times_{G_0}\mathfrak{h})$ the covariant exterior derivative defined by the induced connection on $P_0\times_{G_0}\mathfrak{h}\to X_0$. We claim that $\nabla^c_{0}B_0\in \Omega^3(X_0,P_0\times_{G_0}\mathfrak{h})$ is $\mathbb{X}$-invariant, that is, it is a zero cocycle in the complex $C^{\bullet,3}(\mathbb{X},\Ad(\p))$. To see this, it suffices to check that $\pi^*(\delta_{\Ad(\p)}\nabla^c_{0}B_0)=0$ for the cochain map $\pi^*:C^{\bullet,3}(\mathbb{X},\Ad(\p))\to C^{\bullet,3}(\p,\mathbb{g})$. Indeed,

\begin{align*}
\pi^*(\delta_{\Ad(\p)}\nabla^c_{0}B_0)=&\delta_{\p\times \mathbb{g}}(\pi^*_0\nabla^c_{0}B_0)\\
=& \delta_{\p\times \mathbb{g}}(\nabla^c_{\theta_{0}}\pi^*_0B_0)\\
=& \delta_{\p\times \mathbb{g}}(\nabla^c_{\theta_{0}} B)
\end{align*}

\noindent where $\nabla^c_{\theta_{0}}$ is the induced covariant exterior derivative on $P_0\times \mathfrak{h}$. But $\delta_{\p\times \mathbb{g}}(\nabla^c_{\theta_{0}} B)=\nabla (\delta_{\p\times \mathbb{g}}B)=0$ because of Bianchi identity and $\delta_{\p\times \mathbb{g}}B=\Omega$. The corresponding cohomology class $[\nabla^c_{0}B_0]\in H^{0,3}(\mathbb{X},\Ad(\p))$ is an invariant of principal 2-bundles with connective structures.

\begin{example}[\textbf{3-curvature of a gerbe}]

Let $(\theta,B)$ a connective structure on the underlying principal 2-bundle associated to an $\mathbb{S}^1$-gerbe $\mathcal{G}$ over $X$. In this case, the covariant exterior derivative $\nabla^c_0B=dB$ is the ordinary exterior derivative and being a cocycle $\delta_{\Ad(\p)}\nabla^c_{0}B=0$ is equivalent to saying that $dB\in \Omega^3(Y)$ is basic with respect to the surjective submersion $q:Y\to X$. Hence there exists a unique closed 3-form $\chi\in \Omega^3(X)$ with $q^*\chi=dB$. The 3-form $\chi$ is called the \textbf{3-curvature} of the connective structure $(\theta,B)$ on $\mathcal{G}$.

\end{example}

\subsubsection{Flatness}

A connective structure $(\theta,B)$ on $\p\curvearrowleft \GG\to \mathbb{X}$ is called \textbf{weakly flat} if its curvature 3-form vanishes, i.e. $\nabla^c_{\theta_0}B=0$, where $\nabla^{c}_{\theta_0}:\Omega(P_0,\mathfrak{h})\to \Omega(P_0,\mathfrak{h})$ is the induced a covariant derivative. Since $B\in \Omega^2(P_0,\mathfrak{h})$ is basic, it follows from \eqref{eq:curvature3form} that being weakly flat is equivalent to 

\begin{equation}\label{eq:flatcurving}
dB+\tau_{*}(\theta_0\wedge B)=0,
\end{equation}

\noindent where $\tau_{*}:\mathfrak{g}_0\to \mathfrak{gl}(\mathfrak{h})$ is the Lie algebra representation induced by $\tau:G_0\to GL(\mathfrak{h})$. In the special case $B=0$, we say that $\theta$ is a \textbf{flat} 2-connection on $\p$. In this case, conditions (1) and (2) in Definition \ref{def:curving} imply that the curvature $\Omega=\bar{\Omega}+s^*\Omega_0$ vanishes.

In what follows, we give a geometric interpretation of these notions of flatness in terms of horizontal lifts. For that, let $\p\curvearrowleft \GG\to \mathbb{X}$ be a principal 2-bundle and consider the corresponding Atiyah sequence
 
$$\begin{tikzcd}
    0\arrow[r]&\Ad(\mathbb{P})\arrow[r,"\iota"]\arrow[rd]&\At(\mathbb{P})\arrow[d]\arrow[r,"\overline{T\pi}"]&T\mathbb{X}\arrow[dl]\arrow[r]&0.\\
&&\mathbb{X}&&\end{tikzcd}$$

\noindent As observed in \eqref{eq:Lie2atiyah}, a multiplicative horizontal lift $h:T\mathbb{X}\to \At(\mathbb{\p})$ defines a splitting of the Atiyah extension

\begin{equation}\label{eq:atiyahextension}
\begin{tikzcd}
    0\arrow[r]&\mathbb{L}(\Ad(\mathbb{P}))\arrow[r,"\iota"]&\mathbb{L}(\At(\mathbb{P}))\arrow[r,"\overline{T\pi}"]&\mathbb{L}(T\mathbb{X})\arrow[r]&0.\\
\end{tikzcd}
\end{equation}

We spell this out in terms of the corresponding crossed modules of multiplicative sections. Since by definition $h:T\mathbb{X}\to \At(\mathbb{\p})$ is a VB-groupoid map, it induces a vector bundle morphism $h_c:A_{\mathbb{X}}\to A_{\p}/G_0$ between the core bundles as well as a map between multiplicative sections $h_m:\mathfrak{X}_{mult}(\mathbb{X})\to \Gamma_{mult}(\At(\mathbb{P})); (\xi,u)\mapsto(h\circ \xi,h_0\circ u),$ where $h_0:TX_0\to \At(P_0)$ is the map induced by $h$ on the units. The fact that $h:T\mathbb{X}\to \At(\mathbb{\p})$ is a VB-groupoid morphism implies that it induces a cochain map between the underlying complexes of multiplicative sections, i.e. $\delta\circ h_c=h_m\circ\delta$.

Using the explicit description of the crossed modules of multiplicative sections as in Examples \ref{ex:coreadjoint} and \ref{ex:coreatiyah}, a multiplicative horizontal lift yields to a diagram

\begin{equation}\label{eq:splittingAtiyah2extension}
    \begin{tikzcd}
       0\arrow[r]&C^{\infty}(P_0,\mathfrak{h})^{G_0}\arrow[d]\arrow[r,"\iota_c"]&\Gamma(A_{\mathbb{P}}/G_0)\arrow[d]\arrow[r,"\pi_c"]& \Gamma(A_{\mathbb{X}})\arrow[l,bend right,dashed,"h_c"']\arrow[d]\arrow[r]&0\\
   0\arrow[r]& \mathrm{Hom}_{gpd}(\p,\mathbb{g})^{\GG}\arrow[r,"\iota_m"]&  \mathfrak{X}_{mult}(\mathbb{P})^{\GG}\arrow[r,"\pi_m"]&\mathfrak{X}_{mult}(\mathbb{X})\arrow[l,bend right,dashed,"h_m"']\arrow[
   r]&0
    \end{tikzcd}
\end{equation}

\noindent where $\pi_c:\Gamma(A_{\p}/G_0)\to \Gamma(A_{\mathbb{X}})$ and $\pi_m:\mathfrak{X}_{mult}(\p)^{\GG}\to \mathfrak{X}_{mult}(\mathbb{X})$ are maps induced by the VB-groupoid morphism $\overline{T\pi}:\At(\p)\to T\mathbb{X}$ between core and multiplicative sections, respectively.

The induced map $h:\mathbb{L}(T\mathbb{X})\to \mathbb{L}(\At(\p))$ is not necessarily a Lie 2-algebra morphism. Being a weak Lie 2-algebra morphism amounts to the existence of a map $\Omega_m:\wedge^2\mathfrak{X}_m(\mathbb{X})\to C^{\infty}(P_0,\mathfrak{h})^{G_0}$ such that for every $\xi_1,\xi_2,\xi_3\in \mathfrak{X}_{mult}(\mathbb{X})$ and $a\in\Gamma(A_{\mathbb{X}})$, the following conditions are satisfied:\\

\begin{itemize}
   \item[WF1)] $h_m[\xi_1,\xi_2]-[h_m(\xi_1),h_m(\xi_2)]=\delta\Omega_m(\xi_1,\xi_2)$,
    \item[WF2)] $h_{c}(D_{\xi_1} a)-D_{h_m(\xi_1)}(h_c(a))=\Omega_m(\xi_1,\delta(a))$,
    \item[WF3)] $\big{(}\Omega_m([\xi_1,\xi_2],\xi_3)-D_{h_m(\xi_1)}(\Omega_m(\xi_2,\xi_
    3))\big{)}+c.p.=0$

\end{itemize}

\noindent where $c.p.$ in the last condition stands for cyclic permutations of $\xi_1,\xi_2,\xi_3$. Here $D_{\xi}(a):=[\xi,a^r]|_{X_0}$ denotes the action of $\xi\in\mathfrak{X}_{mult}(\mathbb{X})$
by derivations of $\Gamma(A_{\mathbb{X}})$ defined by the crossed module of multiplicative vector fields, see \eqref{eq:multiplicativecrossedmodule}. 

The next result establishes a relation between connective structures which are weakly flat and weak morphisms of Lie 2-algebras.

\begin{theorem}\label{thm:flatconnectivestructure}

Let $(\theta, B)$ be a connective structure on a principal 2-bundle $\p\curvearrowleft \GG\to \mathbb{X}$. If $(\theta,B)$ is weakly flat (resp. flat), then the multiplicative horizontal lift $h:\mathbb{L}(T\mathbb{X})\to \mathbb{L}(\At(\mathbb{\p}))$ defined by the 2-connection $\theta$ is a weak (resp. strict) morphism of 2-term $L_{\infty}$-algebras.
\end{theorem}

\begin{proof}

Let $(\xi_1,u_1),(\xi_2,u_2)$ be multiplicative vector fields on $\mathbb{X}$ and $u^h_1,u^h_2\in \mathfrak{X}(P_0)$ the corresponding horizontal lifts with respect to the connection $\theta_0$. Let us consider

$$\Omega_m(\xi,\eta):=-B(u^h,v^h).$$

\noindent Since the curving $B$ is invariant, one has a well defined map $\Omega_m:\bigwedge^2\mathfrak{X}_{mult}(\mathbb{X})\to C^{\infty}(P_0,\mathfrak{h})^{G_0}$ and we will show that $\Omega_m$ satisfies conditions WF1), WF2) and WF3). As observed in Example \ref{ex:coreadjoint}, the Lie 2-algebra of multiplicative sections of the adjoint 2-bundle is given by  the crossed module

 $$C^{\infty}(P_0,\mathfrak{h})^{G_0}\stackrel{\delta}{\longrightarrow}\mathrm{Hom}_{gpd}(\p,\mathbb{g})^{\GG}\stackrel{D}{\longrightarrow}\textrm{Der}(\Gamma(P_0\times_G\mathfrak{h})),$$

 \noindent where $D$ is defined by \eqref{eq:multiplicativecrossedmodule} and $\delta(f)=(t^*f-s^*f,s^*(\partial\circ f))$. Recall that $B$ being a weakly flat curving amounts to the next three equations:
 
\begin{align}
\Omega_0+\partial \circ B=&0\label{eq:curving1};\\
    \bar\Omega+(t^*-s^*)B=&0\label{eq:curving2};\\
    dB+\tau_{*}(\theta_0\wedge B)=&0,\label{eq:curvingflat}
\end{align}
  
\noindent where $\Omega=\bar\Omega+s^*\Omega_0$ is the curvature of $\theta$.
  
\noindent   For every $\xi_1,\xi_2\in\mathfrak{X}_{mult}(\mathbb{X})$ covering $u_1,u_2\in\mathfrak{X}(X_0)$, respectively, one has

 $$\Omega(\xi^h_1,\xi^h_2)=h_m[\xi_1,\xi_2]-[h_m\xi_1,h_m\xi_2],$$

\noindent which combined with equations \eqref{eq:curving1} and \eqref{eq:curving2} and the fact that $\xi^h_i$ is both $t$-related and $s$-related with $u^h_i, i=1,2$, imply
   \begin{align*}
       h_m[\xi_1,\xi_2]-[h_m\xi_1,h_m\xi_2]=&(t^*-s^*)(\Omega_m(\xi_1,\xi_2))+s^*(\partial(\Omega_m(\xi_1,\xi_2)))\\
       =&\delta(\Omega_m(\xi_1,\xi_2))\in \mathrm{Hom}_{gpd}(\p,\mathbb{g})^{\GG},
   \end{align*}
   
\noindent showing   WF1). To see WF2), let $a \in \Gamma(A_{\mathbb{X}})$ and $\xi\in\mathfrak{X}_{mult}(\mathbb{X})$ and note that 
   \begin{align}\label{eq:eq1thmflatconnectivestructure}
       h_c(D_{\xi}a)-D_{h_m(\xi)}(h_c(a))=&h\circ [\xi,a^r]|_{X_0}-[\xi^h,h_c(a)^r]|_{P_0}\\ \nonumber
       =&([\xi,a^r]^h-[\xi^h,h_c(a)^r])|_{P_0}\\ \nonumber
       =&\Omega(\xi^h,h_c(a)^r)|_{P_0}\\ \nonumber
       =&\bar\Omega(\xi^h,h_c(a)^r)+s^*\Omega_0(\xi^h,h_c(a)^r).\\ \nonumber
   \end{align}
   
On the right hand side of the previous identity, one observes that the second term vanishes once $h_c(a)^r$ is right-invariant, hence $s$-related to zero.  Regarding the first term, due to \eqref{eq:curving2}, one has

\begin{align}\label{eq:eq2thmflatconnectivestructure}
\bar\Omega(\xi^h,h_c(a)^r)|_{X_0}=&-(t^*-s^*)(B)(\xi^h,h_c(a)^r)|_{X_0}\\ \nonumber
=&-B(u^h,\rho(h_c(a))).
\end{align}

\noindent Since $h_m\circ \delta=\delta\circ h_c$, the multiplicative vector field  $\delta(a)^h=\delta(h_c(a))$ covers $\rho(h_c(a))$, hence $\Omega_m(\xi,\delta(a))=-B(u^h,\rho(h_c(a)))$. Combining this with \eqref{eq:eq1thmflatconnectivestructure} and \eqref{eq:eq2thmflatconnectivestructure}, one concludes

$$h_c(D_{\xi}a)-D_{h_m(\xi)}(h_c(a))=\Omega_m(\xi,\delta(a)),$$

\noindent which is WF2).

   Finally, we show WF3). For $i=1,2,3$ let $\xi_i\in \mathfrak{X}_{mult}(\mathbb{X})$ be a multiplicative vector field covering $u_i\in\mathfrak{X}(X_0)$. It follows from equation \eqref{eq:curvingflat}  that 
   \begin{align}\label{eq:eq3thmflatconnectivestructure}
       0=&(dB+\tau_{*}(\theta_0\wedge B))(u^h_1,u^h_2,u^h_3)\\ \nonumber
       =&\big{(}u^h_1(B(u^h_2,u^h_3))-B([u^h_1,u^h_2],u^h_3)+c.p.\big{)},\\ \nonumber
     \end{align}  

\noindent since $\alpha_*(\theta_0\wedge B)(u^h_1,u^h_2,u^h_3)=0$ once $\theta_0$ vanishes on horizontal vector fields. Also,   $B([u^h_1,u^h_2],u^h_3)=B([u_1,u_2]^h,u^h_3)$ since $B$ is a horizontal form.  Replacing this in \eqref{eq:eq3thmflatconnectivestructure} yields
       
       \begin{align}
       0=&\big{(}\Omega_m([\xi_1,\xi_2],\xi_3)-[\xi^h_1,\Omega_m(\xi_2,\xi_3)]|_{P_0}\big{)}+c.p.\\
       =&\big{(}\Omega_m([\xi_1,\xi_2],\xi_3)-D_{\xi^h_1}(\Omega_m(\xi_2,\xi_3))\big{)}+ c.p..
   \end{align}
   This shows WF3) and therefore $(h_c,h_m,\Omega_m)$ is a weak morphism of Lie 2-algebras, as claimed. 
\end{proof}


\begin{example}

Let $\mathcal{G}=(Y,P,\mu)$ be an $\mathbb{S}^1$-gerbe over a manifold $X$. A weakly flat connective structure $(\theta,B)$ on the underlying principal 2-bundle associated to $\mathcal{G}$ is just a connection 1-form on $P\to Y\times_XY$ with curving $B\in \Omega^2_{cl}(Y)$  a closed 2-form. This says that the 3-curvature $q^*\chi:=dB \in\Omega^3(X)$ of the connective structure vanishes. It follows from Theorem \ref{thm:flatconnectivestructure} that the corresponding horizontal lift is a weak morphism of Lie 2-algebras. This recovers a result by Krepski and Vaughan \cite{KrepskiVaughan} on $\mathbb{S}^1$-gerbes with flat 3-curvature.

\end{example}

\subsection{Chern-Weil-Lecomte map}

In what follows, we will state and prove the main result of this section, namely the construction of the Chern-Weil morphism for principal 2-bundles with 2-connections. This relies on the general set up of Theorem \ref{thm:lecomte} which gives rise to characteristic classes of extensions of $L_{\infty}$-algebras together with a representation up to homotopy. 

Let $\p\curvearrowleft \GG\to \mathbb{X}$ be a principal 2-bundle with associated Atiyah extension:

\begin{equation}\label{eq:atiyahextension2}
\begin{tikzcd}
    0\arrow[r]&\mathbb{L}(\Ad(\mathbb{P}))\arrow[r,"\iota"]&\mathbb{L}(\At(\mathbb{P}))\arrow[r,"\overline{T\pi}"]&\mathbb{L}(T\mathbb{X})\arrow[r]&0.\\
\end{tikzcd}
\end{equation}

Consider the multiplicative horizontal lift $h:T\mathbb{X}\to \At(\p)$ defined by a 2-connection and look at the associated splitting $h:\mathbb{L}(T\mathbb{X})\to \mathbb{L}(\At(\p))$ of \eqref{eq:atiyahextension2}. In order to apply Theorem \ref{thm:lecomte} to the extension \ref{eq:atiyahextension2}, it remains to define a representation up to homotopy of $\mathbb{L}(T\mathbb{X})$ on a cochain complex. We will see that such a representation up to homotopy can be defined in a canonical way. For that, consider the 2-vector space $\mathbb{F}(\mathbb{X})$ of real valued multiplicative functions on $\mathbb{X}$ defined by the 2-term cochain complex $\delta:C^{\infty}(X_0)\to C^{\infty}_{mult}(\mathbb{X}); f\mapsto t^*f-s^*f$, see Example \ref{ex:crossedmultiplicativefunctions}. As observed in Example \ref{ex:crossedmultiplicativevectorfields}, the Lie 2-algebra of multiplicative vector fields $\mathbb{L}(T\mathbb{X})$ is defined by the crossed module $\Gamma(A_{\mathbb{X}})\to \mathfrak{X}_{mult}(\mathbb{X}); a\mapsto a^r-a^l$, where $A_{\mathbb{X}}$ is the Lie algebroid of $\mathbb{X}$. Elements in $\mathbb{L}(T\mathbb{X})$ are pairs $(a,(\xi,u))$ with $a\in\Gamma(A_{\mathbb{X}})$ and $(\xi,u)$ a multiplicative vector field on $\mathbb{X}$. Similarly, elements in $\mathbb{F(X)}$ are pairs $(f,F)$ where $f\in C^{\infty}(X_0)$ and $F\in C^{\infty}_{mult}(\mathbb{X})$. 

There is a canonical degree zero map $\rho:\mathbb{L}(T\mathbb{X})\otimes \mathbb{F(X)}\to \mathbb{F(X)}$ defined by

\begin{equation}\label{eq:canonicalruthderivations}
\rho_{\mathbb{X}}((a,(\xi,u))\otimes (f,F)):=(\Lie_uf+(\Lie_{a^r}F)|_{X_0},\Lie_{\xi}F)
\end{equation}

It is possible to check that \eqref{eq:canonicalruthderivations} is a representation up to homotopy of $\mathbb{L}(T\mathbb{X})$ on the cochain complex $C_m(\mathbb{X})$ of multiplicative functions. More details will appear in \cite{HOW}.

The representation up to homotopy \eqref{eq:canonicalruthderivations} together with the splitting $h:\mathbb{L}(T\mathbb{X})\to \mathbb{L}(\At(\p))$ of \eqref{eq:atiyahextension2}, allows to apply Theorem \ref{thm:lecomte}, yielding the following result.

\begin{theorem}\label{thm:CWL2bundles}
Let $\p\curvearrowleft\GG\to \mathbb{X}$ be a principal 2-bundle equipped with a 2-connection. Let $h:\mathbb{L}(T\mathbb{X})\to \mathbb{L}(\At(\p))$ the induced splitting of the Atiyah extension of Lie 2-algebras \ref{eq:atiyahextension2}. For every $k\geq 0$ there exists a natural map

\begin{align}\label{eq:CWL2bundles}
cw:\underline{\Hom}^{\bullet}(\wedge^k\mathbb{L}(\Ad(\p))&[1],\mathbb{F(X)})^{\mathbb{L}(\mathrm{At}(\mathbb{P}))}\to H^{\bullet+k}(\mathbb{L}(T\mathbb{X}),\mathbb{F(X)})\\ \nonumber
&f\mapsto [f\circ K^k_h],
\end{align}

\noindent where $K^k_h=K_h\wedge_{\id}\cdots \wedge_{\id}K_h$ and $K_h\in C^1(\mathbb{L}(T\mathbb{X}),\mathbb{L}(\Ad(\p)))[1])$ is the curvature of $h$. The map $cw$ does not depend of the 2-connection.
\end{theorem}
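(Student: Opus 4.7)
The plan is to reduce the statement to a direct application of Theorem \ref{thm:lecomte}, using the Atiyah extension \eqref{eq:atiyahextension2} and the representation up to homotopy $\rho_{\mathbb{X}}$ of equation \eqref{eq:canonicalruthderivations}. The only genuinely new input required from the setting of principal 2-bundles is the translation between 2-connections on $\p$ and sections of the Atiyah extension in the $L_{\infty}$-sense.

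First, I would observe that the equivalence between 2-connections on $\p\curvearrowleft \GG \to \mathbb{X}$ and multiplicative horizontal lifts $h:T\mathbb{X}\to \At(\p)$, combined with the functoriality of the Lie 2-algebra of multiplicative sections established in \cite{OrtizWaldron}, yields a degree-preserving graded-vector-space splitting of the surjection $\overline{T\pi}:\mathbb{L}(\At(\p))\to \mathbb{L}(T\mathbb{X})$ in \eqref{eq:atiyahextension2}, as displayed explicitly in diagram \eqref{eq:splittingAtiyah2extension}. This is precisely a section in the sense used for Definition \ref{def:curvature}, although it need not be an $L_{\infty}$-morphism; the failure is measured by the curvature $K_h$, which by Proposition \ref{prop:curvature} lies in $C^1(\mathbb{L}(T\mathbb{X}),\mathbb{L}(\Ad(\p))[1])$.

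Second, granted that $\rho_{\mathbb{X}}$ is a representation up to homotopy of $\mathbb{L}(T\mathbb{X})$ on $\mathbb{F}(\mathbb{X})$ (as established in \cite{HOW}, a fact I shall take as input), the cochain complex $(C(\mathbb{L}(T\mathbb{X}),\mathbb{F}(\mathbb{X})),D_{\rho_{\mathbb{X}}})$ is defined and the hypotheses of Theorem \ref{thm:lecomte} are satisfied for the Atiyah extension \eqref{eq:atiyahextension2} together with $\mathbb{V}:=\mathbb{F}(\mathbb{X})$. Theorem \ref{thm:lecomte}(1) then immediately guarantees that for every $f\in \underline{\Hom}^{\bullet}(\wedge^k\mathbb{L}(\Ad(\p))[1],\mathbb{F}(\mathbb{X}))^{\mathbb{L}(\At(\p))}$ the element $f\circ K_h^{\wedge k}$ is a $D_{\rho_{\mathbb{X}}}$-cocycle of total degree $\bullet + k$, so the formula $f\mapsto [f\circ K_h^k]$ produces a well-defined map \eqref{eq:CWL2bundles}.

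Finally, independence of the map from the choice of 2-connection is precisely independence from the underlying section: any two 2-connections $\theta^0,\theta^1$ on $\p$ produce two sections $h_0, h_1$ of the extension \eqref{eq:atiyahextension2}, and Theorem \ref{thm:lecomte}(2) ensures that $[f\circ K_{h_0}^k]=[f\circ K_{h_1}^k]$ in $H^{\bullet+k}(\mathbb{L}(T\mathbb{X}),\mathbb{F}(\mathbb{X}))$. The only place where real work is concealed is in verifying that $\rho_{\mathbb{X}}$ satisfies the representation-up-to-homotopy equation \eqref{eq:equivariance} — in particular that the mixed term involving $\Lie_{a^r}F|_{X_0}$ interacts correctly with the symmetric coproduct on $S(\mathbb{L}(T\mathbb{X})[1])$ — but this is exactly the content handled in \cite{HOW} and is independent of the principal 2-bundle structure. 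Once that input is accepted, the proof reduces to an application of Theorem \ref{thm:lecomte}.
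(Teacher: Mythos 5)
Your proposal is correct and follows exactly the paper's own route: the paper likewise obtains Theorem \ref{thm:CWL2bundles} by applying Theorem \ref{thm:lecomte} to the Atiyah extension \eqref{eq:atiyahextension2} equipped with the representation up to homotopy \eqref{eq:canonicalruthderivations}, deferring the verification that $\rho_{\mathbb{X}}$ is a ruth to \cite{HOW}. Your identification of the 2-connection with a (not necessarily flat) section of the extension via diagram \eqref{eq:splittingAtiyah2extension} is also exactly how the paper proceeds.
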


One observes that since \ref{eq:atiyahextension2} is an extension of \emph{strict} Lie 2-algebras and the horizontal lift $h:T\mathbb{X}\to \mathrm{At}(\p)$ is a VB-groupoid morphism, then Remark \ref{rmk:CWL2algebras} implies that the curvature of the induced splitting $h:\mathbb{L}(T\mathbb{X})\to \mathbb{L}(\mathrm{At}(\p))$ has only two possibly non-zero components, namely:

\begin{enumerate}

\item  $K_h:\Gamma(A_{\mathbb{X}})\times \mathfrak{X}_{mult}(\mathbb{X})\to C^{\infty}(P_0,\mathfrak{h})^{G_0}$, with

$$K_h(a,\xi)= h_{c}([\xi,a^r]|_{X_0})-[h_m(\xi),(h_c(a))^r]|_{P_0}.$$

\item  $K_h:\wedge^2\mathfrak{X}_{mult}(\mathbb{X})\to \Hom_{gpd}(\p,\mathbb{g})^{\GG}$, where

$$K_h(\xi_1.\xi_2)= \Omega(\xi_1,\xi_2),$$

\end{enumerate}

\noindent where $\Omega=\bar{\Omega}+s^*\Omega_0$ is the curvature of the 2-connection associated to $h:T\mathbb{X}\to \At(\p)$.

Note that in this case, the component $K_h:\wedge^3\mathfrak{g}_0\to \mathfrak{n}_{-1}$ (see (2) in \ref{rmk:CWL2algebras}) is \emph{always zero} since all the brackets of order 3 are zero in a strict Lie 2-algebra.

\begin{remark}\label{rmk:CWLcurving}

Suppose that $B\in \Omega^2_{\tau}(P_0,\mathfrak{h})$ is a curving for a 2-connection $\theta\in \Omega^1(\p,\mathbb{\mathfrak{g}})$. Then, the second component of the curvature $K_h:\wedge^2\mathfrak{X}_{mult}(\mathbb{X})\to \Hom_{gpd}(\p,\mathbb{g})^{\GG}$ is given by

$$K_h=-(t^*B-s^*B)-\partial\circ B.$$

The curvature 3-form $\nabla^c_{\theta_0}B\in \Omega^3(P_0,\mathfrak{h})$ is not necessarily zero unless the horizontal lift $h:\mathbb{L}(T\mathbb{X})\to \At(\mathbb{P})$ is a weak $L_{\infty}$-algebra morphism (see Theorem \ref{thm:flatconnectivestructure}). In particular, since the component $K_h:\wedge^3\mathfrak{g}_0\to \mathfrak{n}_{-1}$ always vanishes, one cannot recover the curvature 3-form as a component of the curvature $K_h\in C^1(T\mathbb{X},\mathbb{F}(\mathbb{X}))$.

\end{remark}

Let us discuss some examples.

\begin{example}[\textbf{Ordinary principal bundles}]

Let $P\to X$ be an ordinary principal G-bundle viewed as a 2-bundle as in Example \ref{ex:2bundlebyusualbundle}. In this case, the map \eqref{eq:CWL2bundles} recovers the ordinary Chern-Weil map of a principal bundle with connection. See \cite{Lecomte}.

\end{example}



\begin{example}[\textbf{$\mathbb{S}^1$-gerbes}]

Let $\mathcal{G}=(Y,P,\mu)$ be an $\mathbb{S}^1$-gerbe over a manifold $X$ equipped with a connective structure $(\theta,B)$. Let $H\in \Omega^3_{cl}(X)$ be the corresponding curvature 3-form. One would expect to recover the cohomology class $[H]\in H^3_{dR}(X)$ from the Chern-Weil-Lecomte map defined by the induced connection on the principal 2-bundle $\mathbb{P}\to Y\times_XY\rr Y$ underlying the gerbe $\mathcal{G}$.  However, as observed in Remark \ref{rmk:CWLcurving}, this is not possible because each of the Lie 2-algebras involved in the Atiyah sequence is strict. In spite of this, one can still describe the curvature 3-form via the Chern-Weil-Lecomte map associated to a suitable exact Courant algebroid. Indeed, as shown in \cite{Hitchin1, Hitchin2}, the choice of a connection and curving on an $\mathbb{S}^1$-gerbe over $X$ with curvature 3-form $H\in\Omega^3_{cl}(X)$ defines an exact Courant algebroid $E\to X$ with a splitting whose \v{S}evera class is $[H]\in H^3_{dR}(X)$. The class $[H]$ can be recovered as a characteristic class via Chern-Weil-Lecomte as in Example \ref{ex:CWLsevera}. Alternatively, as shown in \cite{DjounvounaKrepski, KrepskiVaughan}, every $\mathbb{S}^1$-gerbe over $X$ equipped with a connective structure and 3-curvature $H\in \Omega^3_{cl}(X)$ has an associated Lie 2-algebra of infinitesimal symmetries which is quasi-isomorphic to the Poisson-Lie 2-algebra of $(X,H)$. Hence the class $[H]$ can also be described as in Example \ref{ex:CWL2plectic}.




\end{example}

\subsubsection{Morita invariance}

We finish this section by describing how does the Chern-Weil-Lecomte map behave under pullbacks by Morita maps. For that, we briefly recall some facts about projectable sections of VB-groupoids, see \cite{OrtizWaldron} for more details. Let $\Phi:E\to E'$ be a vector bundle morphism covering $\phi:M\to M'$. Consider the set $\Gamma(E)^{\Phi}$ of \textbf{$\Phi$-projectable sections}, that is, sections $e\in\Gamma(E)$ for which there exists $e'\in\Gamma(E')$ with $\Phi\circ e=e'\circ \phi$. The section $e'$ is not necessarily unique, but uniqueness holds whenever $\phi:M\to M'$ is surjective. In this case, there are maps

\begin{equation}\label{eq:projectablesections}
    \begin{tikzcd}
   \Gamma(\mathbb{E}) &\arrow[l,hook',"\iota"']\Gamma(\mathbb{E})^{\Phi}\arrow[r,"\Phi_*"]&\Gamma(\mathbb{E}'),
    \end{tikzcd}
\end{equation}

\noindent where $\Phi_{*}:\Gamma(E)^{\Phi}\to \Gamma(E'); e\mapsto e'$ and $\Gamma(E)^{\Phi}\hookrightarrow \Gamma(E)$ is the canonical inclusion.

Let $\Phi:\mathbb{E}\to \mathbb{E}'$ be a VB-groupoid morphism covering $\phi:\mathbb{X}\to \mathbb{X}'$ and inducing $\Phi_c:C\to C'$ between the core bundles.  The complex of multiplicative projectable sections $C^{\bullet}_{mult}(\mathbb{E})^{\Phi}$ is the subcomplex of $C^{\bullet}_{mult}(\mathbb{E})$ (\eqref{eq:complexmultiplicative}) given by

$$\delta:\Gamma(C)^{\Phi_c}\to \Gamma_{mult}(\mathbb{E})^{\Phi}; c\mapsto c^r-c^l.$$

\noindent If $\Phi:\mathbb{E}\to \mathbb{E}'$ is surjective on objects, then diagram \eqref{eq:projectablesections} can be upgraded to the level of complexes of multiplicative sections, yielding

\begin{equation}\label{eq:multiplicativeprojectablesections}
    \begin{tikzcd}
   C^{\bullet}_{mult}(\mathbb{E}) &\arrow[l,hook',"\iota"']C^{\bullet}_{mult}(\mathbb{E})^{\Phi}\arrow[r,"\Phi_*"]&C^{\bullet}_{mult}(\mathbb{E}').
    \end{tikzcd}
\end{equation}

\noindent It was proven in \cite{OrtizWaldron} that if $\Phi:\mathbb{E}\to \mathbb{E}'$ is a VB-Morita map which is also surjective on objects, then the maps in \eqref{eq:multiplicativeprojectablesections} are quasi-isomorphisms. Additionally, if $\mathbb{E},\mathbb{E}'$ are LA-groupoids and $\Phi:\mathbb{E}\to \mathbb{E}'$ is an LA-groupoid morphism covering $\phi:\mathbb{X}\to \mathbb{X}'$, then $\Gamma(C)^{\Phi_c}\to \Gamma_{mult}(\mathbb{E})^{\Phi}$ is actually a crossed sub-module of the crossed module of multiplicative sections. In particular, there is a Lie 2-subalgebra $\mathbb{L}(\mathbb{E})^{\Phi}\subseteq \mathbb{L}(\mathbb{E})$ of $\Phi$-projectable sections. Moreover, if $\Phi:\mathbb{E}\to \mathbb{E}'$ is an LA-Morita map which is surjective on objects, then the maps in \eqref{eq:multiplicativeprojectablesections} are quasi-isomorphisms of crossed modules, see Theorem 7.3. in \cite{OrtizWaldron}. In particular, one has quasi-isomorphisms of Lie 2-algebras

\begin{equation}\label{eq:quasiLie2algebras}
    \begin{tikzcd}
   \mathbb{L}(\mathbb{E}) &\arrow[l,hook',"\iota"']\mathbb{L}(\mathbb{E})^{\Phi}\arrow[r,"\phi_*"]&\mathbb{L}(\mathbb{E}').
    \end{tikzcd}
\end{equation}

\noindent As a consequence, an LA-Morita equivalence $\begin{tikzcd}
   \mathbb{E} &\arrow[l,"\Phi"']\tilde{\mathbb{E}}\arrow[r,"\Psi"]&\mathbb{E}'
    \end{tikzcd}$ induces a zig-zag of Lie 2-algebra quasi-isomorphisms
    
\begin{equation}
\begin{tikzcd}
    \mathbb{L}(\mathbb{E})&\mathbb{L}(\tilde{\mathbb{E}})^{\Phi}\arrow[l,"\Phi_*"']\arrow[r,hook]&\mathbb{L}(\tilde{\mathbb{E}})&\mathbb{L}(\tilde{\mathbb{E}})^{\Psi}\arrow[l,hook']\arrow[r,"\Psi_*"]&\mathbb{L}(\mathbb{E}').
    \end{tikzcd}
\end{equation}

In what follows, we specialize the previous discussion in the case of the VB-groupoid map $T\phi:T\mathbb{X}\to T\mathbb{X}'$ associated with a Lie groupoid morphism  $\phi:\mathbb{X}\to \mathbb{X}'$. Our main goal is to show that the cohomology $H(\mathbb{L}(T\mathbb{X}),\mathbb{F}(\mathbb{X}))$ associated with the ruth $\rho_{\mathbb{X}}:\mathbb{L}(\mathbb{X})\otimes \mathbb{F}(\mathbb{X})\to \mathbb{F}(\mathbb{X})$ as in \eqref{eq:canonicalruthderivations}, is invariant by Morita equivalence. This follows from the invariance of the $L_{\infty}$-cohomology in the derived category of ruths, see Corollary \ref{cor:moritainvarianceLinftycohomology}.

Indeed, let $\phi:\mathbb{X}\to \mathbb{X}'$ be a Morita map which is surjective on objects. Then, the LA-Morita map $T\phi:T\mathbb{X}\to T\mathbb{X}'$ is surjective on objects, hence there are quasi-isomorphisms of Lie 2-algebras

\begin{equation*}
    \begin{tikzcd}
   \mathbb{L}(T\mathbb{X}) &\arrow[l,hook',"\iota"']\mathbb{L}(T\mathbb{X})^{T\phi}\arrow[r,"(T\phi)_*"]&\mathbb{L}(T\mathbb{X}').
    \end{tikzcd}
\end{equation*}

\noindent

The next result is proven in \cite{HOW} using the language of differential graded modules.

\begin{theorem}[\cite{HOW}]
Let $\mathbb{X}, \mathbb{X}'$ be Morita equivalent Lie groupoids. Then the representations up to homotopy $(\mathbb{F(X)},\rho_{\mathbb{X}})$ and $(\mathbb{F(X')},\rho_{\mathbb{X}'})$ are quasi-isomorphic.
\end{theorem}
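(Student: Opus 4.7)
The strategy is to reduce to a single Morita map and then construct an explicit zig-zag of representation-up-to-homotopy quasi-isomorphisms, so that the result follows from the Morita-invariance statement for Lie 2-algebras of multiplicative sections discussed just before the theorem together with Corollary \ref{cor:moritainvarianceLinftycohomology}. First, I would unpack the Morita equivalence between $\mathbb{X}$ and $\mathbb{X}'$ as a span $\mathbb{X}\xleftarrow{\phi}\tilde{\mathbb{X}}\xrightarrow{\psi}\mathbb{X}'$ of Morita morphisms which may be assumed surjective on objects. By symmetry and composition it therefore suffices to treat a single Morita map $\phi:\tilde{\mathbb{X}}\to\mathbb{X}$ that is surjective on objects, and produce a zig-zag connecting $(\mathbb{F}(\mathbb{X}),\rho_{\mathbb{X}})$ and $(\mathbb{F}(\tilde{\mathbb{X}}),\rho_{\tilde{\mathbb{X}}})$ fitting the hypotheses of Corollary \ref{cor:moritainvarianceLinftycohomology}.

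Next, I would build the Lie-2-algebraic side of the zig-zag from the projectable-sections construction recalled in \eqref{eq:quasiLie2algebras}. The LA-groupoid morphism $T\phi:T\tilde{\mathbb{X}}\to T\mathbb{X}$ is LA-Morita and surjective on objects, so it yields quasi-isomorphisms of Lie 2-algebras
\[
\begin{tikzcd}
\mathbb{L}(T\mathbb{X}) & \mathbb{L}(T\tilde{\mathbb{X}})^{T\phi} \arrow[l,"(T\phi)_{*}"'] \arrow[r,hook,"\iota"] & \mathbb{L}(T\tilde{\mathbb{X}}).
\end{tikzcd}
\]
For the coefficient side I would use the pullback map $\phi^{\sharp}:\mathbb{F}(\mathbb{X})\to\mathbb{F}(\tilde{\mathbb{X}})$ defined on components by $(f,F)\mapsto(\phi_0^{*}f,\phi_1^{*}F)$. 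A direct check using that $\phi$ is a groupoid morphism shows that $\phi^{\sharp}$ is a cochain map between the 2-term complexes of multiplicative functions. The crucial step is to verify that the pairs
\[
((T\phi)_{*},\phi^{\sharp}):(\mathbb{F}(\mathbb{X}),\rho_{\mathbb{X}})\longrightarrow(\mathbb{F}(\tilde{\mathbb{X}}),\rho_{\tilde{\mathbb{X}}}|_{\mathbb{L}(T\tilde{\mathbb{X}})^{T\phi}}),\qquad (\iota,\mathrm{id}):(\mathbb{F}(\tilde{\mathbb{X}}),\rho_{\tilde{\mathbb{X}}}|_{\mathbb{L}(T\tilde{\mathbb{X}})^{T\phi}})\longrightarrow(\mathbb{F}(\tilde{\mathbb{X}}),\rho_{\tilde{\mathbb{X}}})
\]
satisfy the equivariance identity \eqref{eq:ruthmorphism}. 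This reduces, via \eqref{eq:canonicalruthderivations}, to the naturality of the Lie derivative under $\phi$-related vector fields and the invariance of $\phi$-projectable sections of $A_{\tilde{\mathbb{X}}}$ under pullbacks, both of which are tautologies once unwound. The $(\iota,\mathrm{id})$ morphism is essentially trivial, being induced by an inclusion of sub-Lie-2-algebras and the identity on $\mathbb{F}(\tilde{\mathbb{X}})$, viewed as the pullback representation.

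The remaining point is to show that $\phi^{\sharp}:\mathbb{F}(\mathbb{X})\to\mathbb{F}(\tilde{\mathbb{X}})$ is a quasi-isomorphism of cochain complexes. The 2-term complex $\delta:C^{\infty}(X_{0})\to C^{\infty}_{mult}(\mathbb{X})$ is the truncation of the bottom row (form-degree zero) of the Bott–Shulman–Stasheff complex of $\mathbb{X}$, and its cohomology computes the invariant functions in degree $0$ and the multiplicative functions modulo coboundaries $t^{*}f-s^{*}f$ in degree $1$. I would prove the quasi-isomorphism by combining the Morita invariance of the full BSS cohomology with the fact that the truncation map is a cochain map intertwining $\phi^{\sharp}$ with the Morita pullback on BSS cohomology; concretely, one interprets degree-$0$ classes as smooth functions on the orbit space and degree-$1$ classes as $1$-cocycles on the quotient stack, both of which are manifestly invariant under Morita equivalence. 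This is the main obstacle: one must either invoke a sufficiently refined Morita-invariance statement for the truncated BSS complex, or build an explicit quasi-inverse by averaging along the fibres of $\phi_0:\tilde{X}_0\to X_0$ using a partition of unity and a choice of Haar system (the usual technique in the Morita theory of Lie groupoids).

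With these three ingredients assembled, Corollary \ref{cor:moritainvarianceLinftycohomology} applied to the Lie-2-algebra zig-zag above and the quasi-isomorphism $\phi^{\sharp}$ produces an isomorphism $H(\mathbb{L}(T\mathbb{X}),\mathbb{F}(\mathbb{X}))\cong H(\mathbb{L}(T\tilde{\mathbb{X}}),\mathbb{F}(\tilde{\mathbb{X}}))$ and, more importantly, witnesses the derived equivalence of representations up to homotopy. Composing with the analogous zig-zag coming from $\psi:\tilde{\mathbb{X}}\to\mathbb{X}'$ delivers the claimed quasi-isomorphism between $(\mathbb{F}(\mathbb{X}),\rho_{\mathbb{X}})$ and $(\mathbb{F}(\mathbb{X}'),\rho_{\mathbb{X}'})$, completing the proof.
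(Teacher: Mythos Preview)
Your outline is essentially the same as what the paper does. Note, however, that the paper does \emph{not} actually prove this theorem: it is quoted from \cite{HOW} with the remark that the proof there ``uses the language of differential graded modules,'' and the paper only records the specific quasi-isomorphism it needs. Immediately after the theorem statement the paper writes down exactly your zig-zag: it takes a Morita map $\phi$ surjective on objects, invokes the projectable-sections quasi-isomorphisms $\mathbb{L}(T\mathbb{X})\hookleftarrow\mathbb{L}(T\mathbb{X})^{T\phi}\xrightarrow{(T\phi)_*}\mathbb{L}(T\mathbb{X}')$, asserts that $\phi^{*}:\mathbb{F}(\mathbb{X}')\to\mathbb{F}(\mathbb{X})$ is a quasi-isomorphism of complexes (calling this ``well-known''), and states that $((T\phi)_{*},\phi^{*}):(\mathbb{F}(\mathbb{X}),\iota^{*}\rho_{\mathbb{X}})\to(\mathbb{F}(\mathbb{X}'),\rho_{\mathbb{X}'})$ is a ruth quasi-isomorphism, again deferring details to \cite{HOW}. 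Your plan to verify equivariance via naturality of the Lie derivative under $\phi$-related vector fields, and your identification of the quasi-isomorphism of $\phi^{\sharp}$ as the only nontrivial point, match this sketch precisely.

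One small point of bookkeeping: in the paper's convention (Definition~\ref{def:ruthmorphism}) a ruth morphism $(F,f):(\mathbb{V},\rho)\to(\mathbb{V}',\rho')$ has $F$ going forward on $L_\infty$-algebras but $f:\mathbb{V}'\to\mathbb{V}$ going \emph{backwards} on coefficients. Your displayed pair $((T\phi)_{*},\phi^{\sharp})$ has the arrow on the ruths pointing the wrong way relative to this convention; the paper writes the source as $(\mathbb{F}(\mathbb{X}),\iota^{*}\rho_{\mathbb{X}})$, i.e.\ the ruth of the \emph{projectable} Lie 2-algebra, not of $\mathbb{L}(T\mathbb{X})$ itself. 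This is purely notational and does not affect the argument.
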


We actually need the following quasi-isomorphism. First, it is well-known that the pullback of functions defines a quasi-isomorphism of complexes $\phi^*:\mathbb{F}(\mathbb{X}')\to \mathbb{F}(\mathbb{X})$. Also, one can see that the map $((T\phi)_{*},\phi^*):(\mathbb{F}(\mathbb{X}),\iota^*\rho_{\mathbb{X}})\to (\mathbb{F}(\mathbb{X}'),\rho_{\mathbb{X}'})$ is a quasi-isomorphism of ruths, more details will appear in \cite{HOW}. As a consequence of Corollary \ref{cor:moritainvarianceLinftycohomology} one has

\begin{equation}\label{eq:Moritainvariancecohomologyfunctions}
H(\mathbb{L}(\mathbb{X}),\mathbb{F}(\mathbb{X}))\cong H(\mathbb{L}(\mathbb{X}'),\mathbb{F}(\mathbb{X}')).
\end{equation}
The following result establishes the Morita invariance of the $L_{\infty}$-cohomology of the Lie 2-algebra of multiplicative vector fields on a Lie groupoid.

\begin{theorem}\label{thm:Lie2cohomologyinvariance}

Let $\mathbb{X},\mathbb{X}'$ be Morita equivalent Lie groupoids. Then, there is an isomorphism

$$H(\mathbb{L}(T\mathbb{X}),\mathbb{F}(\mathbb{X}))\cong H(\mathbb{L}(T\mathbb{X}'),\mathbb{F}(\mathbb{X}')).$$

\end{theorem}

\begin{proof}

Let us consider Morita maps $\begin{tikzcd}\mathbb{X} &\arrow[l,"\Phi"']\tilde{\mathbb{Y}}\arrow[r,"\Psi"]&\mathbb{X}'\end{tikzcd}$ which are surjective on objects. This induces an LA-Morita equivalence $\begin{tikzcd}T\mathbb{X} &\arrow[l,"T\Phi"']T\tilde{\mathbb{Y}}\arrow[r,"T\Psi"]&T\mathbb{X}'\end{tikzcd}$ which is surjective on objects as well. The argument used to conclude \eqref{eq:Moritainvariancecohomologyfunctions} can be applied to each map of this Morita equivalence, giving rise to isomorphisms:

$$H(\mathbb{L}(T\mathbb{X}),\mathbb{F}(\mathbb{X}))\cong H(\mathbb{L}(T\tilde{\mathbb{Y}}),\mathbb{F}(\tilde{\mathbb{Y}}))\cong H(\mathbb{L}(T\mathbb{X}'),\mathbb{F}(\mathbb{X}')),$$

\noindent which finishes the proof.

\end{proof}

The $L_{\infty}$-cohomology $H(\mathbb{L}(T\mathbb{X}),\mathbb{F}(\mathbb{X}))$ is the target space of the Chern-Weil-Lecomte \eqref{eq:CWL2bundles} associated with a principal 2-bundle over $\mathbb{X}$. Due to Theorem \ref{thm:Lie2cohomologyinvariance}, one is naturally led to study the behavior of the Chern-Weil-Lecomte map under pullbacks by Morita maps, which is explained below. 

Let $\phi:\mathbb{X}\to \mathbb{X}'$ be a Lie groupoid morphism which is surjective on objects and let $\p\curvearrowleft \GG \to \mathbb{X}'$ be a principal 2-bundle. The pullback 2-bundle $(\phi^*\p)\curvearrowleft \GG \to \mathbb{X}$ is given by $\phi^*\p=(\phi^*_1P_1\rr \phi^*_0P_0)$. The natural map $\Phi:\phi^*\p\to \p$ is both a Lie groupoid morphism and a bundle map covering $\phi:\mathbb{X}\to \mathbb{X}'$. In other words, the map $\Phi:\phi^*\p\to \p$ is a principal 2-bundle morphism over $\phi:\mathbb{X}\to \mathbb{X}'$. Note that if $\phi:\mathbb{X}\to \mathbb{X}'$ is a Morita map, so is $\Phi:\phi^*\p\to \p$.

We proceed now to relate the Atiyah extensions $\mathbb{At}(\p)$ and $\mathbb{At}(\phi^{*}\p)$ of $\p$ and $\phi^{*}\p$, respectively. The Atiyah LA-groupoid of $\phi^*\p \to \mathbb{X}$ is given by the \emph{pullback Lie algebroid} $\phi^{!}\At(\p)\to \mathbb{X}$, which turns out to be an LA-groupoid since $\phi:\mathbb{X}\to \mathbb{X}'$ is a Lie groupoid morphism. Similarly, the adjoint LA-groupoid of $\phi^*\p \to \mathbb{X}$ is $\Ad(\phi^*\p)=\phi^*\p\to \mathbb{X}$ the usual pullback of VB-groupoids. The principal 2-bundle map $\Phi:\phi^{!}\p\to \p$ induces natural LA-groupoid morphisms

$$\At(\Phi):\phi^{!}\At(\p)\to \At(\p); \quad \Ad(\Phi):\phi^*\Ad(\p)\to \Ad(\p),$$

\noindent which are surjective on objects. Hence, it follows from \eqref{eq:quasiLie2algebras} that there are morphisms of Lie 2-algebras

\begin{equation*}
    \begin{tikzcd}
   \mathbb{L}(\phi^{*}\Ad(\p)) &\arrow[l,hook',""']\mathbb{L}(\phi^{*}\Ad(\p))^{\Ad(\Phi)}\arrow[r,"\Ad(\Phi)_*"]&\mathbb{L}(\Ad(\p)),
    \end{tikzcd}
\end{equation*}

\begin{equation*}
    \begin{tikzcd}
   \mathbb{L}(\phi^{!}\At(\p)) &\arrow[l,hook',""']\mathbb{L}(\phi^{!}\At(\p))^{\At(\Phi)}\arrow[r,"\At(\Phi)_*"]&\mathbb{L}(\At(\p)),
    \end{tikzcd}
\end{equation*}

\noindent and

\begin{equation*}
    \begin{tikzcd}
   \mathbb{L}(T\mathbb{X}) &\arrow[l,hook',""']\mathbb{L}(T\mathbb{X})^{T\phi}\arrow[r,"(T\phi)_*"]&\mathbb{L}(T\mathbb{X}'),
    \end{tikzcd}
\end{equation*}

\noindent which determine morphisms of Lie 2-algebra extensions 

\begin{equation}\label{eq:Moritaequivalenceextensions}
    \begin{tikzcd}
   \mathbb{At}(\phi^*\p) &\arrow[l,hook',""']\mathbb{At}(\phi^{*}\p)^{\mathbb{At}(\Phi)}\arrow[r,"\mathbb{At}(\Phi)_*"]&\mathbb{At}(\p),
    \end{tikzcd}
\end{equation}

\noindent where $\mathbb{At}(\phi^{*}\p)^{\mathbb{At}(\Phi)}$ is the extension of Lie 2-algebras

$$\mathbb{L}(\phi^{*}\Ad(\p))^{\Ad(\Phi)} \to \mathbb{L}(\phi^{!}\At(\p))^{\At(\Phi)} \to \mathbb{L}(T\mathbb{X})^{T\phi}$$

\noindent defined by projectable sections.

\begin{remark}
One can see that  $\mathbb{At}(\phi^{*}\p)^{\mathbb{At}(\Phi)}$ is the pullback extension of $\mathbb{At}(\p)$ by the Lie 2-algebra morphism $(T\phi)_{*}:\mathbb{L}(T\mathbb{X})^{T\phi}\to \mathbb{L}(T\mathbb{X}')$. In particular, one has $\mathbb{L}(\phi^{*}\Ad(\p))^{\Ad(\Phi)}\cong \mathbb{L}(\Ad(\p))$.
\end{remark}

Note that every multiplicative horizontal lift $h:T\mathbb{X}'\to \At(\p)$ induces a multiplicative horizontal lift $h^{!}:T\mathbb{X}\to \phi^!(\At(\p)); v\mapsto (v,(h\circ T\phi)(v))$. This is the horizontal lift corresponding to the pullback 2-connection $\Phi^*\theta$ where $\theta\in \Omega^1(\p,\mathbb{g})$ is the 2-connection associated with $h:T\mathbb{X}'\to \At(\p)$.

\begin{remark}\label{rmk:pullbackatiyahextension}

The horizontal lift $h:T\mathbb{X}'\to \At(\p)$, induces a horizontal lift of the pullback extension $\mathbb{At}(\phi^{*}\p)^{\mathbb{At}(\Phi)}$, which coincides with the restriction of $h^{!}:T\mathbb{X}\to \phi^!(\At(\p))$ to projectable sections. The latter is well defined once $h^{!}$ preserves projectable sections. Hence one can see \eqref{eq:Moritaequivalenceextensions} as morphisms of extensions of Lie 2-algebras \emph{with horizontal lifts}.
\end{remark}

The inclusion in \eqref{eq:Moritaequivalenceextensions} gives rise to the following commutative diagram relating the associated Chern-Weil-Lecomte maps

\begin{equation}
    \begin{tikzcd}
    \underline{\Hom}^{\bullet}(\wedge^k\mathbb{L}(\phi^*\Ad(\p))\left[1\right],\mathbb{F(X)})^{\mathbb{L}(\phi^{!}\At(\p))}\arrow[r,"cw"]\arrow[d,""]&H^{k+\bullet}(\mathbb{L}(T\mathbb{X}),\mathbb{F(X)})\arrow[d,""]\\
       \underline{\Hom}^{\bullet}(\wedge^k\mathbb{L}(\Ad(\p))\left[1\right],\mathbb{F(X)})^{\mathbb{L}(\phi^{!}\At(\p))^{\At(\Phi)}}\arrow[r,"cw"] &H^{k+\bullet}(\mathbb{L}(T\mathbb{X})^{T\phi},\mathbb{F(X)}).
    \end{tikzcd}
\end{equation}

\noindent Additionally, it follows from Theorem \ref{thm:naturality} that

\begin{equation}
    \begin{tikzcd}
    \underline{\Hom}^{\bullet}(\wedge^k\mathbb{L}(\Ad(\p))\left[1\right],\mathbb{F(X')})^{\mathbb{L}(\At(\p))}\arrow[r,"cw"]\arrow[d,""]&H^{k+\bullet}(\mathbb{L}(T\mathbb{X}'),\mathbb{F(X')})\arrow[d,""]\\
       \underline{\Hom}^{\bullet}(\wedge^k\mathbb{L}(\Ad(\p))\left[1\right],\mathbb{F(X)})^{\mathbb{L}(\phi^{!}\At(\p))^{\At(\Phi)}}\arrow[r,"cw"] &H^{k+\bullet}(\mathbb{L}(T\mathbb{X})^{T\phi},\mathbb{F(X)}).
    \end{tikzcd}
\end{equation}

As a result, we have the following.

\begin{theorem}
    Let $(\mathbb{P}\curvearrowleft \mathbb{G}\to \mathbb{X}')$ be a principal 2-bundle equipped with a 2-connection. If $\phi:\mathbb{X}\to \mathbb{X}'$ is a  Lie groupoid morphism, then the following the diagram

    \begin{equation}\label{eq:MoritaCWL2bundles}
    \begin{tikzcd}
        \Hom^{\bullet}(\wedge^k \mathbb{L}(\phi^{*}\mathrm{Ad}(\mathbb{P}))[1],\mathbb{F}(\mathbb{X}))^{\mathbb{L}(\phi^{!}\mathrm{At}(\mathbb{P}))}\arrow[r,"cw"]\arrow[d]&H^{\bullet+ k}(\mathbb{L}(T\mathbb{X}),\mathbb{F}(\mathbb{X}))\arrow[d]\\
        \Hom^{\bullet}(\wedge^k \mathbb{L}(\mathrm{Ad}(\mathbb{P}))[1],\mathbb{F}(\mathbb{X}))^{\mathbb{L}(\phi^{!}\At(\p))^{\At(\Phi)}}\arrow[r,"cw"]&H^{\bullet+ k}(\mathbb{L}(T\mathbb{X})^{\phi},\mathbb{F}(\mathbb{X}))\\
        \Hom^{\bullet}(\wedge^k \mathbb{L}(\mathrm{Ad}(\mathbb{P}))[1],\mathbb{F}(\mathbb{X}'))^{\mathbb{L}(\mathrm{At}(\mathbb{P}))}\arrow[r,"cw"]\arrow[u]&H^{\bullet+ k}(\mathbb{L}(T\mathbb{X}'),\mathbb{F}(\mathbb{X}')),\arrow[u]\\
    \end{tikzcd}
\end{equation}

\noindent commutes. Additionally, if $\phi:\mathbb{X}\to \mathbb{X}'$ is a Morita map which is surjective on objects, then the vertical arrows on the right hand side of the diagram are isomorphisms.

\end{theorem}

Since $\mathbb{L}(\phi^{*}\Ad(\p))^{\Ad(\Phi)}\cong \mathbb{L}(\Ad(\p))$, see Remark \ref{rmk:pullbackatiyahextension}, the commutative diagram on top of \eqref{eq:MoritaCWL2bundles} says that given a ruth morphism $f:\wedge^k\mathbb{L}(\phi^{*}\Ad(\mathbb{P}))[1]\to \mathbb{F}(\mathbb{X})$, then:

$$\iota^{*}_{\mathbb{L}(T\mathbb{X})^{\phi}}cw(f)=cw(f|_{\mathbb{L}(\phi^{*}\Ad(\p))^{\Ad(\Phi)}}),$$

\noindent where $\iota_{\mathbb{L}(T\mathbb{X})^{\phi}}:\mathbb{L}(T\mathbb{X})^{\phi}\to \mathbb{L}(T\mathbb{X})$ is the inclusion. In other words, the Chern-Weil-Lecomte map is compatible with projectable complexes. The restriction to projectable complex does not change the space coefficients $\mathbb{F(X)}$ where the cohomology class $cw(f)$ takes values. However, the second commutative diagram in \eqref{eq:MoritaCWL2bundles} comes from the compatibility between the Chern-Weil-Lecomte map and pullbacks of extensions \ref{thm:naturality}, hence the space of coefficients $\mathbb{F(X)}$ is replaced by $\mathbb{F(X')}$. If $g:\wedge^k\mathbb{L}(\Ad(\p))\to \mathbb{F(X')}$, then

$$\big{(}(T\phi)_*\big{)}^{*}cw(g)=cw(g\circ \phi^{*}).$$


\end{document}